\documentclass[reqno]{amsart}
\usepackage{amsmath, amsthm, amssymb, amstext}

\usepackage{hyperref,xcolor}
\hypersetup{pdfborder={0 0 0},colorlinks}
\usepackage{enumitem}
\setlength{\parindent}{1.2em}
\allowdisplaybreaks
\raggedbottom
\usepackage{graphicx}
\usepackage{caption}
\usepackage{mathtools}
\usepackage{tikz}
\usepackage{esint}

\newtheorem{theorem}{Theorem}
\newtheorem{remark}[theorem]{Remark}
\newtheorem{lemma}[theorem]{Lemma}
\newtheorem{proposition}[theorem]{Proposition}
\newtheorem{corollary}[theorem]{Corollary}
\newtheorem{definition}[theorem]{Definition}


\DeclareMathOperator*{\ess}{\text{ess}}
\DeclareMathOperator*{\diam}{\text{diam}}
\DeclareMathOperator*{\dist}{\text{dist}}

\newcommand{\N}{\mathbb{N}}
\newcommand{\R}{\mathbb{R}}
\newcommand{\h}{\mathbb{H}}
\newcommand{\eps}{\varepsilon}
\newcommand{\ph}{\varphi}
\newcommand{\into}{\int_{\Omega}}
\newcommand{\intr}{\iint_{\R^N\times\R^N}} 
\newcommand{\inta}{\iint\limits_{\substack{x,y\in\R^N \\ \abs{x-y}\leq1}}}
\newcommand{\intb}{\iint\limits_{\substack{x,y\in\R^N \\ \abs{x-y} \geq 1}}}
\newcommand{\intoo}{\iint\limits_{\Omega \times \Omega}} 
\renewcommand{\l}{\left}
\renewcommand{\r}{\right}

\def\abs#1{\left|{#1}\right|}

\numberwithin{theorem}{section}
\numberwithin{equation}{section}


\title[Brezis-Nirenberg and logistic  problem for Logarithmic Laplacian]{The Brezis-Nirenberg and logistic  problem for the Logarithmic Laplacian}

\author[R. Arora]{Rakesh Arora}
\address[R. Arora]{ Department of Mathematical Sciences, Indian Institute of Technology Varanasi (IIT-BHU), Uttar Pradesh 221005, India}
\email{rakesh.mat@iitbhu.ac.in, arora.npde@gmail.com}

\author[J. Giacomoni]{Jacques Giacomoni}
\address[J. Giacomoni]{
LMAP, UMR E2S-UPPA CNRS 5142, Ba\^timent IPRA, Avenue de l’Universit\'e F-64013 Pau, France}
\email{jacques.giacomoni@univ-pau.fr}

\author[A. Vaishnavi]{Arshi Vaishnavi}
\address[A. Vaishnavi]{Department of Mathematical Sciences, Indian Institute of Technology Varanasi (IIT-BHU), Uttar Pradesh 221005, India}
\email{arshiv1998@gmail.com}

\subjclass{35B40, 35S15, 35J60, 35R11}
\keywords{Logarithmic Laplacian, D\'iaz-Saa type inequality, Fractional Laplacian, Brezis-Nirenberg problem, Logistic equation, Orlicz Spaces}

\begin{document}
\begin{abstract}
    In this work, we study the non-local analogue of Brezis-Nirenberg and logistic type elliptic equations involving the logarithmic Laplacian and critical logarithmic non-linearity with superlinear-subcritical perturbation. 
    
     In the first part of this work, we derive new sharp, continuous and compact embeddings of nonlocal Sobolev spaces (of order zero) into Orlicz type spaces. As an application of these embeddings and variational analysis as carried out in \cite{Angeles-Saldana-2023, Santamaria-Saldana-2022}, we prove the existence of a least energy weak solution of the Brezis-Nirenberg and logistic type problem involving the logarithmic Laplacian. For the uniqueness of solution, we prove a new D\'iaz-Saa type inequality, which is of independent interest and can be applied to a larger class of problems. 
    
    In the second part of the work, depending upon the growth of non-linearity and regularity of the weight function, we study the small-order asymptotic of non-local weighted elliptic equations involving the fractional Laplacian of order $2s.$ We show that least energy solutions of a weighted non-local fractional problem with superlinear or sublinear type non-linearity converge to a non-trivial, non-negative least energy solution of a Brezis-Nirenberg type or logistic-type problem, respectively, involving the logarithmic Laplacian.
    
\end{abstract}
\maketitle
\tableofcontents
\section{Introduction}
Recently, there has been a growing interest in studying boundary value problems involving the logarithmic Laplacian. This interest has been driven by important applications together with significant advances in understanding non-local phenomena in the frame of partial differential equations (PDEs). For instance, in population dynamics \cite{Pellacci-Verzini-2018}, a logistic-type equation involving the fractional Laplacian of order $2s$ is employed to describe the movement of species. The parameter $s$ accounts for different dispersal strategies, with smaller values of $s$ describing nearly static populations capable of moving long distances swiftly and larger values of $s$ corresponding to highly dynamic species that mostly move short distances. Moreover, in \cite{Pellacci-Verzini-2018}, it is shown that a very small order $s$ models the best strategy for survival if the habitat is not too fragmented or not too hostile on an average. Similarly, a small value of the exponent $s$ yields an optimal choice in various applications, including optimal control \cite{Sprekels-Valdinoci-2017}, population dispersal strategies \cite{Pellacci-Verzini-2018}, and fractional image denoising \cite{Antil-Bartels-2017}. This line of investigation (leading to the study of emerging  asymptotics as $s\to 0^+$ in different models) is new and of independent interest from a theoretical point of view (see \cite{Angeles-Saldana-2023, Chen-Weth-2019, Feulefack-Jarohs-2022, Jarohs-Saldana-Weth-2020, Santamaria-Saldana-2022}).

In this article, we aim to study the following non-local elliptic equation involving the logarithmic Laplacian
\begin{equation}\label{prob:logarithmic-laplace}
\tag{$\mathcal{P}_{\Delta,\sigma}$}
    L_\Delta u  = f(x,u) + \sigma u \ln\abs{u}  \quad 
    \text{in } \Omega, \quad u= 0 \quad \text{in } \R^N\setminus\Omega,
\end{equation}
where $\Omega\subseteq \R^N$, $N \geq 1$ is a bounded domain with Lipschitz boundary, $f: \Omega \times \mathbb{R} \to \mathbb{R}$ is a given function and $\sigma \in \mathbb{R}$. Here, the operator $L_\Delta$ denotes the logarithmic Laplacian, {\it i.e.,} the pseudo-differential operator with Fourier symbol $2 \ln |\xi|,$ which can also be seen as a first-order expansion of the fractional Laplacian (the pseudo-differential operator with Fourier symbol $|\xi|^{2s}$). In particular, for $u \in C_c^2(\mathbb{R}^N)$ and $x \in \mathbb{R}^N,$
\begin{equation}\label{first-order-exp}
    (-\Delta)^s u(x) = u(x) + s L_{\Delta} u(x) + o(s) \quad s \to 0^+ \quad \text{in} \ L^p(\mathbb{R}^N), \ 1 < p \leq \infty.
\end{equation}  
Recall that for $s \in (0,1)$, the fractional Laplacian $(-\Delta)^s$ can be written as a singular integral operator defined in the principal value sense (see \cite[Section 3]{valdinoci})
\[
\begin{split}
(-\Delta)^su(x) = c(N,s) \ \text{P.V.} \int_{\R^N} \frac{u(x)-u(y)}{\abs{x-y}^{N+2s}} ~dx,
\end{split}
\] \\
where $c(N,s) =2^{2s} \pi^\frac{-N}{2} s \frac{\Gamma\l(\frac{N+2s}{2}\r)}{\Gamma(1-s)}$ is a normalizing constant. In the same spirit, the operator $L_\Delta$ has the following integral representation (see \cite[Theorem 1.1]{Chen-Weth-2019}):
\begin{equation}\label{def:log-lap-ope}
    L_\Delta u(x) = c_N \int_{\mathcal{B}_1(x)} \frac{u(x)-u(y)}{\abs{x-y}^N} ~dy -c_N\int_{\R^N\setminus\mathcal{B}_1(x)} \frac{u(y)}{\abs{x-y}^N} ~dy + \rho_Nu(x),
\end{equation}
where $\mathcal{B}_1(x) \subset \mathbb{R}^N$ denotes the Euclidean ball of radius $1$ centered at $x$ and
\[
c_N := \pi ^{\frac{-N}{2}}\Gamma(\frac{N}{2}), \quad \rho_N := 2\ln 2+ \psi(\frac{N}{2})-\gamma, \quad  \gamma := -\Gamma^{'}(1),\]
 $\gamma$ being the Euler-Mascheroni constant and $\psi := \frac{\Gamma^{'}}{\Gamma}$, the digamma function. The singular kernel in \eqref{def:log-lap-ope} is sometimes called of {\it zero-order} because it is the limiting case of hyper singular integrals. These types of operators are related to the geometric stable L\'evy process, we refer to \cite{Sikic-Song-Vondracek-2006, Kassmann-Mimica-2017} and the references therein for an overview of associated applications. Coming to the latest advancements in the study of small order limits of non-local operators, the authors in \cite{Dyda-Jarohs-Sk-2025} have recently introduced a non-linear extension of the logarithmic Laplacian.

The first motivation to study the problem \eqref{prob:logarithmic-laplace} comes from exploring the asymptotic behavior of the solution of the following weighted fractional Dirichlet problem:
\begin{align}
	\label{Eq:Problem}
    \tag{$\mathcal{P}_s$}
	{(-\Delta)^s} u  = a(s,x)\abs{u}^{p(s)-2}u 
        \quad \text{in } \Omega, \quad u= 0 \quad \text{in } \R^N\setminus \Omega,
\end{align}
where the exponent function $p: (0, \frac{1}{4}) \to \mathbb{R}^+$ satisfies
\begin{equation}\label{regu:p}
    p \in C^1([0, \frac{1}{4}]), \quad \lim_{s \to 0^+} p(s) =2
\end{equation}
and $a$ satisfies
\begin{enumerate}[label=\textnormal{(a$_0$)},ref=\textnormal{a$_0$}]
	\item\label{a:regulrity-asym} $  a(\cdot,x) \in C^1([0, \frac{1}{4}]) \quad \text{and} \quad   \lim_{s \to 0^+} a(s, x) = 1 \quad \text{for a.e.} \ x \in \Omega.$
	\end{enumerate}
 
Note that by passing limit $s\to 0^+$ on both sides of equation \eqref{Eq:Problem} (at least heuristically), in light of \eqref{regu:p} and \eqref{a:regulrity-asym}, we get \[
(-\Delta)^s u \to u \quad \text{and} \quad a(s,x)\abs{u}^{p(s)-2} u \to u.\]
Since this gives no information on the limiting profile of problem \eqref{Eq:Problem}, we have to rely on the first-order expansion (in $s$) on both sides of \eqref{Eq:Problem}, which naturally leads us to the logarithmic Laplace operator $L_\Delta.$
Thus,   
 \[
\begin{split}
 L_\Delta u :&= \lim_{s \to 0^+} \frac{d}{ds} (-\Delta)^s u = \lim_{s\to0^+} \frac{(-\Delta)^s u- \lim_{s\to0^+}(-\Delta)^su}{s}
\end{split}
\]
and 
\[
\begin{split}
\lim_{s\to0^+}\frac{ a(s,x)|u|^{p(s)-2}u- u}{s}  & = \lim_{s\to 0^+}\frac{\left(a(s,x)- 1\right) |u|^{p(s)-2}u_s}{s} + \lim_{s\to0^+}\frac{|u|^{p(s)-2} u- u}{s}\\
& = a'(0,x)u +p'(0) \ln\abs{u} u.
\end{split}
\]
This brings us to the study of the following limiting problem 
\begin{align}
    \label{Eq:LimitingProblem}
    {L_\Delta}u  = a'(0,x)u+p'(0)\ln\abs{u}u  \quad 
    \text{in } \Omega, \quad u= 0 \quad \text{in } \R^N\setminus\Omega
\end{align} 
which coincides with the problem \eqref{prob:logarithmic-laplace} when $f(x,u) = a'(0,x) u$ and $\sigma = p'(0).$ \vspace{0.1cm}\\ 
In the seminal work \cite{Chen-Weth-2019}, the authors have studied an eigenvalue problem involving the logarithmic Laplacian, {\it i.e.,} $f(x,u) = \lambda u$ and $\sigma=0$. They showed that the first Dirichlet eigenvalue $\lambda_{1, L}$ of $L_{\Delta}$ is  given by
\[
\lambda_{1, L}(\Omega) := \frac{d}{ds}\bigg|_{s=0} \lambda_{1,s}(\Omega),
\]
where $\lambda_{1,s}(\Omega)$ is the first Dirichlet eigenvalue of $(-\Delta )^s$ in $\Omega$ which is principal and simple. Moreover, the $L^2$-normalized Dirichlet eigenfunction of $(-\Delta )^s$ corresponding to $ \lambda_{1,s}(\Omega)$ converges to the $L^2$-normalized Dirichlet eigenfunction of $L_\Delta$ corresponding to $\lambda_{1,L}(\Omega)$ in $L^2(\Omega).$  Under additional conditions, they also proved the maximum principles in weak and strong forms and a Faber-Krahn type inequality. For more results on the bounds of eigenvalues and properties of eigenfunctions, we refer \cite{Laptev-Weth-2021, Feulefack-Jarohs-Weth-2022}.

 In \cite{Santamaria-Saldana-2022}, the authors have studied the problem \eqref{Eq:Problem} in the superlinear and subcritical case with $a(s,x)\equiv 1$ and have shown that the sequence of solutions $u_s$ of the problem \eqref{Eq:Problem} converges to a solution of the limiting problem \eqref{Eq:LimitingProblem} with $a'(0,x) \equiv 0$ and $p'(0) \in (0, \frac{4}{N}).$ Note that the exponent $\frac{4}{N}$ appears in the first order expansion of the fractional critical exponent $2_s^\ast.$ The authors in \cite{Angeles-Saldana-2023} conducted a similar study focusing on problem \eqref{Eq:Problem} in the sublinear case and $a(s,x) \equiv 1$. In this case, the solution of the problem \eqref{Eq:Problem} converges to the unique (positive) solution of the limiting problem \eqref{Eq:LimitingProblem} with $a'(0,x) \equiv 0$ and $p'(0) \in (-\infty, 0).$ In \cite{Frank-Konig-Tang}, an equation similar to \eqref{prob:logarithmic-laplace} is considered
with a conformally invariant logarithmic operator and $\sigma = \frac{4}{N}$ on the sphere. Therein, all non-negative solutions of this equation have been classified, such solutions are extremals of a Sobolev logarithmic inequality on the sphere proved in \cite[Theorem 3]{Beckner-1995}. In \cite{Chen-Zhou-2024}, the authors have studied a critical semilinear problem \eqref{prob:logarithmic-laplace} with $f \equiv 0$ and $\sigma = \frac{4}{N}$ in $\mathbb{R}^N$, showing the existence of non-negative solutions along with their classification. For a discussion on the regularity properties and boundary behavior of weak solutions of an elliptic problem involving the logarithmic Laplacian, we refer the reader to \cite{Chen-Weth-2019, Lara-Saldana-2022, Santamaria-Rios-Saldana-2024}.

Another motivation to study the problem \eqref{prob:logarithmic-laplace} arises from the structure of the non-linearity involved in the equation. When $f(x,u)=\omega(x)u, \  \omega \in L^\infty(\Omega),\ \omega >0$  and $\sigma \in (-\infty, 0)$, the problem \eqref{prob:logarithmic-laplace} can be seen as the non-local counterpart of the stationary logistic type equation for zero-order operators. More precisely, when $0 < u \leq 1$, the non-linearity $f(\cdot, u)$ acts as a source term and when $u >1$, the non-linearity $f(\cdot, u)$ acts as an absorption term. On the other hand, when $f(x,u)=\omega(x)u, \  \omega \in L^\infty(\Omega),\ \omega >0$ and $\sigma >0$, the problem \eqref{prob:logarithmic-laplace} can be seen as the non-local counterpart of the Brezis-Nirenberg type problem for zero-order operators, due to the absence of compact embeddings of corresponding energy space into Orlicz type spaces governed by critical logarithmic growth non-linearities (see Theorems \ref{thm:embd:results} and \ref{thm:sharp:embd:results} and Remark \ref{compact:argu}). For the results available on logistic type equations and Brezis-Nirenberg type problem for the Laplacian and the fractional Laplacian, we refer the reader to \cite{Brezis-Nirenberg-1983, Caffarelli-Dipierro-Valdinoci-2017, Servadei-Valdinoci-2013, Servadei-Valdinoci-2015}.

Motivated from the above discussion and existing literature, we focus in the present paper on the existence, uniqueness and regularity results for the limiting problem \eqref{prob:logarithmic-laplace} depending upon the value of the parameter $\sigma$. To the best of our knowledge, the key novelty of this work lies in the presentation of the first existence results related to the Brezis-Nirenberg and logistic type problems involving the logarithmic Laplacian with  linear/superlinear perturbations of the critical logarithmic non-linearity. In frame of these goals, first we prove new sharp continuous and compact embedding of the corresponding energy space $\h(\Omega)$ in Orlicz spaces. The sharpness of embeddings is justified by constructing a suitable class of functions with non-trivial logarithmic scaling properties. These sharp embeddings are of independent interest and can be used to study several other aspects of a semilinear elliptic PDE governed by the logarithmic Laplacian like multiplicity results, non-existence results, {\it etc.}

As an application of the sharp embeddings established in this work, and drawing inspiration from the analysis conducted in \cite{Angeles-Saldana-2023, Santamaria-Saldana-2022}, we prove the existence of a non-trivial least energy solution for the problem \eqref{prob:logarithmic-laplace} involving a large class of non-linearities (see below Theorems \ref{Main-res-limitingprob} and \ref{them:existence-limiting}). This work also serves as an extension of the existence results proved in \cite{Angeles-Saldana-2023, Santamaria-Saldana-2022}, since it incorporates subcritical and superlinear perturbations of the critical logarithmic non-linearity, in view of sharp compact embeddings (see below Remark \ref{compact:argu}).

Next, to study the uniqueness of weak solution, we show a D\'iaz-Saa type inequality for the logarithmic Laplacian. This inequality is of independent interest and can be used for other types of equations as well, in particular, porous medium type equations, doubly non-linear equations (see \cite{Arora-Giacomoni-Warnault-2020}). This inequality also complements the uniqueness result of least energy solutions for the sublinear problem \eqref{prob:logarithmic-laplace} when $f \equiv 0$ and $\sigma <0$ studied in \cite{Angeles-Saldana-2023} to the uniqueness of non-negative weak solution of the problem \eqref{prob:logarithmic-laplace} and covers a large class of non-linearities $f$ (see below Theorem \ref{thm:regularity-uniqueness}). Motivated from \cite{Angeles-Saldana-2023, Santamaria-Saldana-2022}, we also analyze the asymptotic behavior of the solutions of non-local weighted fractional elliptic equations with superlinear and sublinear growth non-linearities which has not been dealt with previously. 
 
The reader is being referred to the following section for a detailed description of the main results, wherein we have not only highlighted the key difficulties faced while addressing the aforementioned issues but also provided a comparison with the findings already existing in the literature. \vspace{0.1cm}\\
\textbf{Outline of the paper:} The rest of the paper is organized as follows. In Section \ref{main-results}, we give the definition of the function spaces involved, the notion of weak solutions and the statements of our main results. In section \ref{Sec:embed}, we prove new sharp continuous and compact embeddings of the energy space into Orlicz type spaces. In section \ref{brezis-Nirenberg}, we study the existence of solutions for the Brezis-Nirenberg type problem \eqref{prob:logarithmic-laplace} when $\sigma \in (0, \frac{4}{N})$ and analyze the asymptotic behavior of the solutions of the non-local weighted elliptic equation \eqref{Eq:Problem} in the superlinear case. In section \ref{logistic-results}, we study the existence and uniqueness results for a logistic type problem \eqref{prob:logarithmic-laplace} when $\sigma \in (-\infty,0)$ and analyze the asymptotic behavior of solutions of the non-local weighted elliptic equation \eqref{Eq:Problem} in the sublinear case. In this section, we prove the D\'iaz-Saa type inequality for the logarithmic Laplacian.

\section{Definitions and main results}\label{main-results}
In this section, we set up the functional framework in the energy spaces and give statements of the main results. 
Let $\Omega \subseteq \R^N$ be a bounded domain with Lipschitz boundary. For $q\in [1,\infty]$, we denote by $L^q(\Omega)$ the standard Lebesgue space with the norm
\[
\|u\|_{L^q(\Omega)}:= \left(\into |u|^q ~dx\right)^\frac{1}{q}\ \text{for} \ 1 \leq q < \infty \quad \text{and} \quad \|u\|_{L^\infty(\Omega)}:= \ess\sup_\Omega |u|.
 \]
 Define
\[
k, j: \mathbb{R}^N \setminus \{0\} \to \mathbb{R} \quad \text{as} \quad k(z) =  \frac{{\bf 1}_{B_1(z)}}{|z|^N} \quad \text{and} \quad j(z) =  \frac{{\bf 1}_{\mathbb{R}^N \setminus B_1(z)}}{|z|^N}.
\]
For the problem \eqref{prob:logarithmic-laplace}, the natural solution space $\h(\Omega)$ is defined as (see \cite{Chen-Weth-2019})
\[
\begin{split}
\h(\Omega) = \bigg\{u\in L^2(\Omega): & \ u=0 \  \text{in} \ \R^N \setminus \Omega \ \text{and}\\
& \intr \abs{u(x)-u(y)}^2 k(x-y) ~dx ~dy < +\infty\bigg\}.
\end{split}
\]
The inner product and norm on $\h(\Omega)$ are given, respectively, by
\[
\mathcal{E}(u,v) = \frac{c_N}{2}\intr (u(x)-u(y))(v(x)-v(y)) k(x-y) ~dx ~dy\]
and  
\[ \|u\|= (\mathcal{E}(u,u))^\frac{1}{2}.\]
Moreover, the quadratic form associated with the operator $L_\Delta$ is defined as
\begin{align}
\label{quadratic-form}
\mathcal{E}_L(u,v) = \mathcal{E}(u,v)-c_N \intr u(x)v(y) j(x-y) ~dx ~dy + \rho_N\int_{\R^N} uv ~dx.
\end{align}
Also,
  $$\lambda_{1,L} := \min\{{\mathcal{E}_L(u,u)}: u\in \h(\Omega), \|u\|_{L^2(\Omega)}=1\} \in \R \quad  \text{and} \quad  \lambda_{1,L} \leq \ln(\lambda_{1,s}),$$ where $\lambda_{1,s}$, $\lambda_{1,L}$ represent the first Dirichlet eigenvalue (in $\Omega$) of $(-\Delta)^s$  and $L_\Delta$ respectively. To the best of our knowledge, from \cite[Theorem 2.1]{Correa-DePablo-2018} or \cite[Corollary 2.3]{Laptev-Weth-2021} and \cite[Proposition 3.8]{Santamaria-Saldana-2022}), it is only known that
\[
\h(\Omega) \hookrightarrow L^2(\Omega) \ \text{is compact} 
\]
and for every $u \in \h(\Omega)$
\[
\begin{split}
\frac{4}{N} \into \ln (|u|)u^2 ~dx \leq \mathcal{E}_L(u,u)+\frac{4}{N}\ln (\|u\|_{L^2(\Omega)})\|u\|^2_{L^2(\Omega)} + a_N \|u\|^2_{L^2(\Omega)}
\end{split}
\]
(for more details see \cite[Proposition 3.8]{Santamaria-Saldana-2022}). Such type of inequality is known as Pitt’s inequality in the literature, which was first proposed and proved by Beckner in \cite[Theorem 3]{Beckner-1995} for the Schwarz function and was later extended in \cite[Proposition 3.8]{Santamaria-Saldana-2022} for functions in the energy space $\h(\Omega).$

The above Pitt's type inequality prompts an investigation for continuous and compact embedding of $\h(\Omega)$ into Orlicz spaces. However, due to the sign-changing property of the growth function $f(t)=t^2 \ln t$ in $[0, \infty)$, it cannot serve as a candidate for the generalized $\Phi$-function within the Orlicz space theory. To address this issue, it is natural to consider $t^2 \ln(e+t)$ as a generalized $\Phi$-function, in view of the following relation 
\begin{equation}\label{eq:relation:embd}
    t^2 \ln(e+t) = t^2 \ln(t) + t^2 \ln\l(1+\frac{e}{t}\r) \quad \text{for} \ t>0.
\end{equation}
Before giving the first result, we recall the definitions of generalized $\Phi$-function and Orlicz spaces.
\begin{definition}
	A function $\ph \colon [0,+\infty) \to [0,+\infty]$ is said to be a generalized $\Phi$-function, if $\ph$ is increasing and satisfies 
    \[
    \ph(0)=0, \quad \lim_{t\to 0^+} \ph(t) = 0, \quad \text{and} \quad \lim_{t \to +\infty} \ph(t) = +\infty.
    \]
Moreover, $\ph$ is said to be a generalized convex $\Phi$-function (denoted by $\ph \in \Phi_c$) if $\ph$ is left-continuous and convex.
    \end{definition}

Let $M(\Omega)$ be the set of all real valued Lebesgue measurable functions defined on $\Omega$ and  recall the following inequality:
\begin{equation}
\label{logineq}
\frac{t}{t+1}\leq \ln(1+t) \leq t \quad \text{for all} \ t>-1.
\end{equation} 
Let $\ph \colon [0,+\infty) \to [0,+\infty]$ be a generalized convex $\Phi$-function. The modular associated with $\ph$ is defined as
	\begin{align}
		\varrho_\ph (u) = \into \ph(\abs{u(x)}) ~dx.
	\end{align}
	The set
	\begin{align*}
		L^\ph (\Omega) := \{ u \in M(\Omega)\,:\, \varrho_\ph (\lambda u) < \infty \text{ for some } \lambda > 0 \}
	\end{align*}
	equipped with the Luxemburg norm
	\[
		\|u\|_\ph = \inf \bigg\{ \lambda > 0 \,:\, \varrho_\ph \l( \frac{u}{\lambda} \r)  \leq 1 \bigg\}
\]
is a Banach space (see \cite[Theorem 3.3.7]{Harjulehto-Hasto-2019}). \\
The first result corresponds to the new continuous and compact embeddings of the energy space $\h(\Omega)$ in Orlicz type spaces. 
\begin{theorem}\label{thm:embd:results} 
Let $\Omega\subseteq \R^N$ be a bounded domain and $\ph, \psi:[0,\infty) \to [0,\infty)$ be generalized convex $\Phi$-functions such that $\ph(t):= t^2 \ln(e+t)$ for all $t \geq 0$, 
\begin{enumerate}[label=\textnormal{($\psi_1$)},ref=\textnormal{$\psi_1$}]
       \item \label{compact:cond} {$\lim_{t\to 0^+} \frac{\psi(t)}{t} $ exists in $\mathbb{R}$} and $\lim_{t \to \infty} \frac{\psi(t)}{\ph(t)} = 0.$
   \end{enumerate}
Then, 
    \begin{enumerate}
        \item[\textnormal{(i)}] the embedding $\h(\Omega)\hookrightarrow L^\ph(\Omega)$ is continuous. 
        \item[\textnormal{(ii)}] the embedding $\h(\Omega)\hookrightarrow L^\psi(\Omega)$ is compact.
    \end{enumerate}
\end{theorem}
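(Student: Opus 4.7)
Both parts will be built on the Pitt-type inequality recalled above, the compact inclusion $\mathbb{H}(\Omega)\hookrightarrow L^2(\Omega)$, and the decomposition \eqref{eq:relation:embd}. The remainder in \eqref{eq:relation:embd} admits the elementary pointwise bound $t^2\ln(1+e/t)\leq C(t+t^2)$ on $(0,\infty)$ (a case split at $t=1$, using $-t\ln t\leq e^{-1}$ on $(0,1]$), while the convolution summand in \eqref{quadratic-form} is controlled by $C_\Omega\|u\|_{L^2}^2$ since the kernel $j$ vanishes outside $B_d(0)\setminus B_1(0)$ with $d=\diam\Omega$ and is bounded there. Combining these ingredients with the Pitt-type inequality yields, for every $v\in\mathbb{H}(\Omega)$, the key modular estimate
\[
\varrho_\varphi(v) \leq C\bigl(\|v\|^2 + \|v\|_{L^2}^2\bigr) + \ln\bigl(\|v\|_{L^2}\bigr)\|v\|_{L^2}^2 + C\|v\|_{L^2}.
\]

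\textbf{Part (i).} Let $u\in\mathbb{H}(\Omega)$ with $\|u\|=1$; by the compact embedding $\|u\|_{L^2}\leq C_0$. Since $t\mapsto t^2\ln t$ is bounded on $[0,C_0]$, the right-hand side of the modular estimate is dominated by a constant $K_0$ independent of $u$. Convexity of $\varphi$ with $\varphi(0)=0$ gives $\varphi(t/\lambda)\leq\varphi(t)/\lambda$ for $\lambda\geq 1$, hence $\varrho_\varphi(u/\max(K_0,1))\leq 1$, i.e.\ $\|u\|_\varphi\leq\max(K_0,1)$. Positive homogeneity of the Luxemburg norm then upgrades this bound to $\|u\|_\varphi\leq\max(K_0,1)\|u\|$ for every $u\in\mathbb{H}(\Omega)$.

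\textbf{Part (ii).} Let $(u_n)\subset\mathbb{H}(\Omega)$ be bounded. The compact inclusion into $L^2(\Omega)$ furnishes a subsequence with $u_n\to u$ in $L^2(\Omega)$ and a.e.\ in $\Omega$. To conclude $\|u_n-u\|_\psi\to 0$ it suffices to prove $\varrho_\psi(k(u_n-u))\to 0$ for every $k>0$. Fix $k>0$ and set $v_n=k(u_n-u)$; part (i) together with the $\Delta_2$-property of $\varphi$ (immediate from $\varphi(2t)/\varphi(t)\to 4$ both at $0$ and at $+\infty$) guarantees $\sup_n\varrho_\varphi(v_n)<\infty$. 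Given $\epsilon>0$, \eqref{compact:cond} supplies $M>0$ with $\psi(t)\leq\epsilon\varphi(t)$ for $t\geq M$, while convexity of $\psi$ with $\psi(0)=0$ implies that $t\mapsto\psi(t)/t$ is nondecreasing, so $\psi(t)\leq(\psi(M)/M)\,t$ on $[0,M]$. Splitting the integral at level $M$,
\[
\varrho_\psi(v_n) \leq \tfrac{\psi(M)}{M}\int_\Omega|v_n|\,dx + \epsilon\,\varrho_\varphi(v_n);
\]
letting $n\to\infty$ (using $v_n\to 0$ in $L^1(\Omega)$) and then $\epsilon\to 0^+$ yields the claim.

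\textbf{Main obstacle.} The most delicate step is the modular estimate itself: one must juggle the signed contribution of $t^2\ln t$ through Pitt's inequality, exploit $L^2$-compactness to bound $\|u\|_{L^2}$ uniformly on bounded subsets of $\mathbb{H}(\Omega)$ (so that $t^2\ln t$ stays bounded on the relevant range), and simultaneously control the non-symmetric convolution summand in $\mathcal{E}_L$. A secondary subtlety in Part (ii) is the absence of a $\Delta_2$-assumption on $\psi$: modular convergence does not automatically imply norm convergence, which forces the estimate to be applied to every scalar multiple $k(u_n-u)$ separately before the Luxemburg norm can be controlled.
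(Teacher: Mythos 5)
Your proposal is correct and follows essentially the same route as the paper: part (i) rests on the decomposition \eqref{eq:relation:embd}, the sharp logarithmic Sobolev (Pitt-type) inequality, the boundedness of the far-field bilinear term, and the compact inclusion into $L^2(\Omega)$; part (ii) splits the modular of $\psi$ at a level set and uses $L^1$-convergence on the low part and the uniform $\varphi$-modular bound from (i) on the high part. The one genuine difference is your passage from modular to norm convergence for $\psi$: by proving $\varrho_\psi(k(u_n-u))\to 0$ for every $k>0$ and using convexity of $\psi$ with $\psi(0)=0$ to linearize on $[0,M]$, you avoid any (aInc)/(aDec) or $\Delta_2$ hypothesis on $\psi$, whereas the paper invokes its norm--modular lemma (stated under (aInc)$_p$/(aDec)$_q$) for $\psi$ without those properties being assumed in the theorem; your treatment is the more careful one at that step.
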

\begin{remark}
    The compactness result stated in the above theorem is a significant improvement compared to the results available in the literature (see \cite[Theorem 2.1]{Correa-DePablo-2018} or \cite[Corollary 2.3]{Laptev-Weth-2021} where compactness in $L^2(\Omega)$ is proved). We present some examples of the function $\psi$ which satisfy \eqref{compact:cond}:
\begin{enumerate}
    \item[\textnormal{(i)}] $\psi(t) = t^2 \ln^\theta(e+ |t|)$ for $\theta \in [0,1),$  
    \item[\textnormal{(ii)}] $\psi(t) = t^2 \ln(e+ \ln(1+|t|)).$
\end{enumerate}
\end{remark}
Next, we show the sharpness of the results obtained in Theorem \ref{thm:embd:results} by constructing a suitable class of functions in $\h(\Omega)$ with non-trivial logarithmic scaling parameters. 
\begin{theorem}\label{thm:sharp:embd:results}
    Let $\Omega\subseteq \R^N$ be a bounded, convex domain and $\ph, \gamma \colon [0,\infty) \to [0,\infty)$ be generalized convex $\Phi$-functions, $\ph(t):= t^2 \ln(e+t)$ for all $t \geq 0$ and there exists a $\beta >0$ such that
    \begin{equation}\label{embd:cond:smaller}
        \lim_{\ell \to \infty} \frac{\gamma(\ell \beta)}{\ph(\ell \beta)} =\infty.
    \end{equation}
 Then,
        \begin{enumerate}
        \item[\textnormal{(i)}] the embedding $\h(\Omega) \hookrightarrow L^\ph(\Omega)$ is not compact and 
        \item[\textnormal{(ii)}] the embedding $\h(\Omega) \hookrightarrow L^\gamma(\Omega)$ is not continuous. 
    \end{enumerate}
    
\end{theorem}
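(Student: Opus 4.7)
The plan is to construct a concentrating family, normalized in $\mathbb{H}(\Omega)$, that witnesses both failures. Since $0 \in \Omega$, fix $\lambda_0 > 0$ with $B_{\lambda_0}(0) \subset \Omega$, pick a non-trivial $u \in C_c^\infty(B_{1/2}(0))$ that equals a constant $c_0 > 0$ on some smaller ball $B_r(0)$, and set $u_\lambda(x) := u(x/\lambda)$ and $v_\lambda := u_\lambda/\|u_\lambda\|$ for $\lambda \in (0,\lambda_0)$. Each $v_\lambda$ is supported in $B_\lambda(0) \subset \Omega$.

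The crucial step is the asymptotic $\|u_\lambda\|^2 \sim c_N \omega_{N-1} \lambda^N \ln(1/\lambda) \|u\|_{L^2}^2$ as $\lambda \to 0^+$. Changing variables $x = \lambda x'$, $y = \lambda y'$ in $\mathcal{E}(u_\lambda,u_\lambda)$ and splitting at $|x'-y'|=1$, the inner piece contributes $\lambda^N \mathcal{E}(u,u) = O(\lambda^N)$; in the outer annulus $1 < |x'-y'| \leq 1/\lambda$ the support condition $\supp u \subset B_{1/2}$ forces at most one of $x', y'$ to lie in $\supp u$, which after symmetrization reduces the integrand to $u(x')^2$ or $u(y')^2$, and the elementary identity $\int_{1 < |z| \leq 1/\lambda} |z|^{-N}\, dz = \omega_{N-1} \ln(1/\lambda)$ produces the asserted logarithmic factor, which dominates. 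Combined with $\|u_\lambda\|_{L^2}^2 = \lambda^N \|u\|_{L^2}^2$, this yields $\|v_\lambda\| = 1$ and $\|v_\lambda\|_{L^2}^2 \sim 1/(c_N \omega_{N-1} \ln(1/\lambda)) \to 0$.

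For (i), a parallel computation of the $\ph$-modular shows that on $\supp u_\lambda$ one has $\ln(e + v_\lambda) \sim (N/2)\ln(1/\lambda)$, and this logarithm exactly cancels the $1/\ln(1/\lambda)$ from $\|v_\lambda\|_{L^2}^2$, yielding $\varrho_\ph(v_\lambda) \to N/(2 c_N \omega_{N-1}) > 0$. The elementary convexity bound $\ph(at) \geq a^2 \ph(t)$ for $a \geq 1$ then implies $\liminf_{\lambda \to 0^+} \|v_\lambda\|_\ph > 0$. On the other hand, $\ph(t) \geq t^2$ gives $L^\ph(\Omega) \hookrightarrow L^2(\Omega)$ continuously, and since $v_\lambda \to 0$ in $L^2$, any $L^\ph$-convergent subsequence of $(v_\lambda)$ would have to tend to $0$ in $L^\ph$ as well, contradicting the lower bound. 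Hence no such subsequence exists, and the embedding fails to be compact.

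For (ii) it suffices to show $\|v_\lambda\|_\gamma \to \infty$, since $\|v_\lambda\| = 1$ then precludes any continuous embedding. Fix $M > 0$ and bound $\varrho_\gamma(v_\lambda/M)$ below by its contribution on $B_{\lambda r}(0)$ (of measure $\sim \lambda^N$), where $v_\lambda/M$ takes the constant value $K_\lambda := c_0/(M \|u_\lambda\|) \sim \lambda^{-N/2}(\ln(1/\lambda))^{-1/2}$, which tends to $\infty$. Writing $K_\lambda = (K_\lambda/\beta)\cdot \beta$ with $K_\lambda/\beta \to \infty$, hypothesis \eqref{embd:cond:smaller} gives $\gamma(K_\lambda)/\ph(K_\lambda) \to \infty$; since $\ph(K_\lambda) \asymp \lambda^{-N}$, it follows that $\lambda^N \gamma(K_\lambda) \to \infty$, so the $\gamma$-modular blows up and $\|v_\lambda\|_\gamma > M$ eventually, proving the claim. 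The main technical obstacle throughout is the careful logarithmic bookkeeping: the energy $\|u_\lambda\|^2$ picks up an extra $\ln(1/\lambda)$ factor absent from standard fractional scaling, and threading this normalization correctly through both the $\ph$-modular (where it cancels) and the $\gamma$-modular (where it survives, forcing blow-up) is precisely what drives the sharpness phenomenon.
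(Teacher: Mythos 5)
Your proposal is correct and follows essentially the same route as the paper: the same logarithmically normalized concentration family $u(x/\lambda)$ scaled by $(\lambda^{N}\ln(1/\lambda))^{-1/2}$, the same splitting of the energy at $|x-y|=1$ to extract the $\ln(1/\lambda)$ factor, the same positive lower bound on the $\ph$-modular to kill compactness, and the same use of \eqref{embd:cond:smaller} on the set where the rescaled function is large to blow up the $\gamma$-modular. The only (harmless) differences are cosmetic: you normalize exactly in $\h(\Omega)$, use a bump function constant on a ball instead of a general $u$ with its level set $\Omega_\beta$, and conclude non-compactness via the $L^2$-embedding rather than a.e. convergence, which incidentally lets you dispense with the convexity hypothesis on $\Omega$.
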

\begin{remark}
    The above result indicates that if any Orlicz space $L^\gamma(\Omega)$ is smaller than $L^\ph(\Omega)$ in the sense of \eqref{embd:cond:smaller}, {\it i.e}, $L^\gamma(\Omega) \hookrightarrow L^\ph(\Omega)$, then the embedding $\h(\Omega) \hookrightarrow L^\gamma(\Omega)$ is not continuous. Moreover, if \eqref{embd:cond:smaller} holds for some $\beta>0$, then it holds for all $\beta'>0.$
\end{remark}
\begin{remark}\label{compact:argu}
In view of inequalities \eqref{eq:relation:embd} and \eqref{logineq}, the function $\ph$ can be rewritten as
\[
\ph(|t|)=  t^2 \ln(|t|) + t^2 \ln\l(1+\frac{e}{|t|}\r),
\]
where the second term can be estimated as
    \[
    t^2 \ln\l(1+\frac{e}{|t|}\r) \leq e t \quad \text{for} \ t \in \mathbb{R}.
    \]
Given the compact embedding $\h(\Omega) \hookrightarrow L^1(\Omega)$, it follows that only the first term $t^2 \ln(t)$ contributes to the failure of compactness in the embedding result stated in Theorem \ref{thm:sharp:embd:results}, which motivates the study of the problem \eqref{prob:logarithmic-laplace}.
\end{remark}
Next, we introduce the functional framework and the notion of weak solution in order to study our main problem \eqref{prob:logarithmic-laplace}. The energy functional $\mathbb{E}$: $\h(\Omega)$ $\longrightarrow \R $ associated to \eqref{prob:logarithmic-laplace} is given by
\begin{equation}\label{def:enerfunct:plambda}
    \mathbb{E}(u)= \frac{1}{2}\mathcal{E}_L(u,u)- \into F(x,u) ~dx -\frac{\sigma}{4}\into u^2(\ln(|u|^2)-1) ~dx,
\end{equation}
where $F(x,t) = \int_0^t f(x,z) ~dz$ is the primitive of $f$ and $f: \Omega\times\R \to \R$ is a function satisfying the superlinear growth assumptions:
\begin{enumerate}
[label=\textnormal{($f_1$)}, ref=\textnormal{$f_1$}]
\item  \label{assump f1} 
$f \in L^\infty(\Omega \times K)$ for any $K \subset \mathbb{R}$, $|K| < +\infty,$
\end{enumerate}

\begin{enumerate}
[label=\textnormal{($f_2$)}, ref=\textnormal{$f_2$}]
\item  \label{assump f2} 
$\lim_{|t| \to \infty} \frac{f(x,t)}{t\ln|t|}=0$ uniformly in $\Omega,$
\end{enumerate}
\begin{enumerate}
[label=\textnormal{($f_3$)}, ref=\textnormal{$f_3$}]
\item  \label{assump f3} 
there exists $g \in L^\infty(\Omega)$ such that $\lim\limits_{t \to 0} \frac{f(x,t)}{t} = g(x)$ uniformly in $\Omega.$
\end{enumerate}

In light of the logarithmic Sobolev inequality (see \cite[Proposition 3.8]{Santamaria-Saldana-2022}), Theorem \ref{thm:embd:results} (i), Remark \ref{compact:argu}, \eqref{assump f1} and \eqref{assump f2}, the energy functional $\mathbb{E}$ is well defined in $\mathbb{H}(\Omega).$ Next, we introduce the notion of a weak solution to the problem \eqref{prob:logarithmic-laplace}.
\begin{definition}
     A function $u \in \h(\Omega)$ is said to be a ``weak solution" of \eqref{prob:logarithmic-laplace} if
 \[\mathcal{E}_L(u,v) = \into (f(x,u) + \sigma u \ln\abs{u}) v  ~dx \quad  \text{for  every} \ v\in \h(\Omega).\]
    \end{definition}
    
The set of all non-trivial solutions of \eqref{prob:logarithmic-laplace} belongs to the set
\[
N_{0, \sigma}:= \{u\in \h(\Omega)\setminus\{0\}: \mathcal{E}_L(u,u)=\into ( f(x,u) + \sigma u \ln\abs{u})u ~dx\}, 
\]
which is the  Nehari manifold associated with \eqref{prob:logarithmic-laplace}. Moreover, a function $u\in N_{0, \sigma}$ is called a least energy solution of \eqref{prob:logarithmic-laplace} if 
\[
\mathbb{E}(u) = \inf_{v\in N_{0,\sigma}} \mathbb{E}(v).
\]
The first result corresponds to the existence of a weak solution for the problem \eqref{prob:logarithmic-laplace} when $\sigma \in (0, \frac{4}{N})$. 
\begin{theorem}
    \label{Main-res-limitingprob} 
 For every $\sigma \in (0,\frac{4}{N})$ and $f$ satisfying \eqref{assump f1}-\eqref{assump f3}, 
 \begin{enumerate}
[label=\textnormal{($f_4$)}, ref=\textnormal{$f_4$}]
\item  \label{assump f4} 
$f(x,\cdot)\in C^1(\R)$ such that $\lim_{|t| \to \infty} \frac{f'(x,t)}{\ln|t|}=0$ uniformly in $\Omega,$
\end{enumerate}
 \begin{enumerate}
[label=\textnormal{($f_\sigma$)}, ref=\textnormal{$f_\sigma$}]
\item  \label{assump f-sigma} there exists a constant $\delta < \sigma$ such that
\[
(t^2f'(x,t)+\delta t^2 -tf(x,t)) \geq 0 \quad \text{for all} \ t \in \mathbb{R} \ \text{and a.e.} \ x \in \Omega,
\]
\end{enumerate}
the problem \eqref{prob:logarithmic-laplace} has a least energy solution $u\in \h(\Omega)\setminus\{0\}.$ Furthermore, if $F(x,t) \leq F(x, |t|)$ for all $t \in \mathbb{R}$ and for a.e. $x \in \Omega$, all least energy solutions of \eqref{prob:logarithmic-laplace} do not change sign in $\Omega$.
\end{theorem}
\begin{remark}\label{remark-AR-cond}
Note that the condition \eqref{assump f-sigma} implies
\begin{equation}\label{modi-AR-cond}
    G(x,t):=t f(x,t) - 2 F(x,t) + \frac{\delta t^2}{2} \geq 0 \quad \text{for all} \ t \in \mathbb{R} \ \text{and a.e. in} \ \Omega.
\end{equation}
The condition \eqref{assump f-sigma} framed for the study of problem \eqref{prob:logarithmic-laplace} can be seen as an alternative to the classical Ambrosetti-Rabinowitz condition used in the study of superlinear boundary value problem involving the Laplacian and the fractional Laplacian. Moreover, \eqref{assump f-sigma} holds if the map
\[
t \mapsto G(x,t) \ \text{is increasing for} \ t \geq 0 \ \text{and decreasing for} \ t \leq 0.
\]
\end{remark}
\begin{remark}\label{examples}
The existence result in Theorem \ref{Main-res-limitingprob} addresses a wide range of non-linearities $f(x, \cdot)$ that adhere to the superlinear and subcritical growth conditions outlined in \eqref{assump f1}-\eqref{assump f4} and \eqref{assump f-sigma}. Some of the examples (motivated from the embedding in Theorem \ref{thm:embd:results}) satisfying assumptions \eqref{assump f1}- \eqref{assump f4} and \eqref{assump f-sigma} are:
\begin{itemize}
    \item[\textnormal{(i)}] $f(x,t) = g(x) t \ln^\theta(e+|t|)$ for any $\theta \in [0,1)$ and $g \in L^\infty(\Omega),$
    \item[\textnormal{(ii)}] $f(x,t) = g(x)t \ln(\mu+ \ln(1+|t|))$ for any $\mu >0$ and $g \in L^\infty(\Omega).$
\end{itemize}
\end{remark}
\begin{remark}
The condition $F(x,t) \leq F(x, |t|)$ for all $t \in \mathbb{R}$ and for a.e. $x \in \Omega$ in Theorem \ref{Main-res-limitingprob} holds with equality if $f(x, \cdot)$ is an odd function. In particular, it holds for the examples provided in Remark \ref{examples}. 
\end{remark}

Note that in view of \eqref{eq:relation:embd}, the energy functional $\mathbb{E}$ can be rewritten as
\begin{equation}\label{energy:func:pertu}
    \begin{split}
\mathbb{E}(u) &= \frac{1}{2}\mathcal{E}_L(u,u) + \frac{1}{2}\into \l(\frac{\sigma u^2}{2} - 2F(x,u)\r) ~dx - \frac{\sigma}{2}\into \ph(|u|)~dx\\
& \qquad \qquad \qquad + \frac{\sigma }{2}\into u^2 \ln\l(1+\frac{e}{|u|}\r)  ~dx
\end{split}
\end{equation}
where the last term in the above energy functional \eqref{energy:func:pertu} is well defined because of \eqref{logineq}.

Due to the presence of the modular term $\into \ph(|u|)~dx$ and the lack of compactness of the embedding $\h(\Omega) \hookrightarrow L^\ph(\Omega)$, the functional $\mathbb{E}$ in \eqref{energy:func:pertu} does not satisfy the Palais-Smale condition. In the seminal studies of the Brezis-Nirenberg problem, \cite{Brezis-Nirenberg-1983} addresses the Laplacian and \cite{Servadei-Valdinoci-2015} tackles the fractional Laplacian, where the existence of a positive solution is demonstrated by using the Ambrosetti-Rabinowitz mountain pass theorem-a geometric argument-without relying on the Palais-Smale condition (see \cite[Theorem 2.2]{Brezis-Nirenberg-1983}). Although the same geometric argument is applicable to our problem \eqref{prob:logarithmic-laplace}, the primary challenge stems from employing the conclusions drawn from these geometric arguments, specifically in demonstrating the non-triviality of the solution obtained from the limit of the minimizing sequence. Consequently, significant obstacles arise in applying standard variational methods to identify the critical points of $\mathbb{E}$.

To prove the above existence result, the main variational tool applied is the method of Nehari manifold. However, the implementation of this approach encounters significant difficulties. The first and foremost challenge is that the convergence of the minimizing sequence cannot be established through the known compact embedding of the energy space $\h(\Omega)$ into $L^2(\Omega)$. Additionally, to gain the loss of compactness, we use the sharp logarithmic Sobolev-type inequality (see \cite[Proposition 3.8]{Santamaria-Saldana-2022}) and Theorem \ref{thm:embd:results} , which plays a major role in our analysis. Together with this and exploiting the assumption $\sigma \in (0, \frac{4}{N})$, we gain the compactness of the minimizing sequence in the Nehari manifold (see Proposition \ref{weak-conve-est}).
To show the non-triviality of the solution, we perform the analysis of the fibering map and show that the associated Nehari manifold is away from the origin (see Lemma \ref{Nehari-struc} and Lemma \ref{lower-bds-norms}). The embeddings in Theorem \ref{thm:embd:results} also play an important role in deriving the limit estimates for non-linear terms with superlinear perturbation (see Lemma \ref{limits_interchange}). \\
Next we study the problem \eqref{prob:logarithmic-laplace} when $\sigma \in (-\infty, 0)$, which corresponds to the ``sublinear regime". We first state the existence result for the least energy solution of the problem \eqref{prob:logarithmic-laplace} in the sublinear regime.
\begin{theorem}
      \label{them:existence-limiting}
      Let $f$ satisfies \eqref{assump f1}-\eqref{assump f3} and $\sigma \in (-\infty,0)$.  Then, the problem \eqref{prob:logarithmic-laplace} has a non-trivial, least energy solution in $\h(\Omega).$ Furthermore, if $F(x,t)\leq F(x,|t|)$ for all $t \in \R$, all least energy solutions of \eqref{prob:logarithmic-laplace} do not change sign in $\Omega$.
  \end{theorem}  
  The analysis in the sublinear regime differs from that in the superlinear regime. We observe that the functional $\mathbb{E}$ is coercive on the energy space in the sublinear regime, thus, the existence of a solution can be proved for all $\sigma \in (-\infty,0)$, which contrasts with the superlinear regime, where the existence of solutions is proved for all $\sigma \in (0,\frac{4}{N})$ on the Nehari manifold.
  
  Next, we address the uniqueness of non-trivial, non-negative solution of  sublinear problem \eqref{prob:logarithmic-laplace} with the help of D\' iaz-Saa type inequality for the logarithmic Laplacian, which was also an open problem. Such an inequality is also related to Picone-type identity, majorly used to study the simplicity of the eigenvalues, Sturmian comparison principles, oscillation theorems, Hardy and Barta type inequalities. We refer the reader to \cite{Allegretto-Huang-1998, Arora-Giacomoni-Warnault-2020} and the references therein for more applications of such types of inequalities. 

Our next result pertains to the (global) D\' iaz-Saa type inequality for the logarithmic Laplacian.
\begin{theorem}\label{thm:modi-Diaz-Saa-inequ}
Let $w_1, w_2 \in \h(\Omega)$ such that $w_1, w_2>0$ a.e. in $\Omega$ and $\frac{w_1}{w_2}, \frac{w_2}{w_1} \in L^\infty(\Omega)$. Then, for $q=2$, the following inequality holds:
\begin{equation}\label{Diaz-Saa-ineq}
    \mathcal{E}_L\l(w_2, \frac{w_2^q-w_1^q}{w_2^{q-1}}\r) - \mathcal{E}_L\l(w_1, \frac{w_2^q-w_1^q}{w_1^{q-1}}\r) \geq 0.
\end{equation}
Moreover, if  $h_\Omega(x) + \rho_N \geq 0$ in $\Omega$, then the above inequality holds true for all $q \in [1,2).$
\end{theorem}
\begin{remark}
    Note that the condition $h_\Omega + \rho_N \geq 0$ in $\Omega$ in Theorem \ref{thm:modi-Diaz-Saa-inequ} required for the case $q \in [1,2)$ holds true when the domain $\Omega$ is small enough (see \cite[Lemma 4.11]{Chen-Weth-2019}). This condition also appears in the validity of the maximum principle and the positivity of first eigenvalue for the logarithmic Laplacian.
\end{remark}

The proof of the above inequality consists mainly of two steps. First, notice that the inequality \eqref{Diaz-Saa-ineq} is equivalent to the convexity of the following functional 
    \[
    W_q(u) =\mathcal{E}_L(u^{\frac{1}{q}},u^{\frac{1}{q}})
    \]
in the convex cone $V_+^q = \{u : \Omega \to (0,\infty) \ | \ u^{\frac{1}{q}} \in \h(\Omega)\}.$ In the case of the fractional Laplacian, the convexity of the corresponding energy functional can be established through straightforward algebraic inequalities (see \cite[Proposition 4.2]{Brasco-Franzina-2014}). However, this is not true for the operator \( L_\Delta \) due to the sign-changing nature of the bilinear form \( \mathcal{E}_L(\cdot, \cdot) \). In order to overcome this problem, we exploit the equivalent representation of the bilinear form \( \mathcal{E}_L(\cdot, \cdot) \) from \cite[Proposition 3.2]{Chen-Weth-2019}. Secondly, the convexity of the restriction map \( W_q: V_+^q \to \mathbb{R}^+ \) when applied to \( V_+^q \) is well-known to be equivalent to the monotonicity of its subdifferential \( \partial W_q(u) \) at \( u \in V_+^q \). This subdifferential is a non-empty set only for certain elements \( u \in V_+^q \), and determining these elements can be quite challenging. To avoid this issue, we restrict ourselves only to certain directional derivatives of $W_q$ which exist in the classical sense. The monotonicity of such directional derivative maps leads to the above D\'iaz-Saa type inequality. \\ 
As an application of the above D\'iaz-Saa type inequality, we state the following uniqueness and regularity result for the solution of the problem \eqref{prob:logarithmic-laplace} in the sublinear regime.
  \begin{theorem}
      \label{thm:regularity-uniqueness}
  Let $\sigma \in (-\infty,0)$, $f$ satisfies \eqref{assump f1}, \eqref{assump f2}, 
  and $u \in \h(\Omega)$ be a non-trivial non-negative weak solution of \eqref{prob:logarithmic-laplace}.
     Then, 
     \begin{enumerate}
         \item[\textnormal{(i)}] $u \in L^\infty(\Omega)$ and $u \in C(\R^N).$
  \item[\textnormal{(ii)}] If there exists a $\theta>0$ such that
  \begin{equation}\label{f-est-near-0}
       f(x,t) + \sigma t \ln |t| \geq 0 \quad \text{for all} \ t \in [0, \theta),
  \end{equation}
  then $u>0$ in $\Omega$ and there is a $C>0$ such that
   \begin{equation}
   \label{u_reg-sublinear}
   C^{-1} \ell^\frac{1}{2}(\delta(x)) \leq u(x) \leq C \ell^\frac{1}{2}(\delta(x)) \quad \text{for all} \ x \in \Omega,
   \end{equation}
where \[\ell(r):= - \frac{1}{\ln (\min \{r,\frac{1}{10}\})}.\] 
\item[\textnormal{(iii)}] $u$ is unique if $f$ satisfies \eqref{f-est-near-0} and the map
\begin{equation}\label{f-decr-prop}
    t \mapsto \frac{f(x,t) + \sigma t \ln |t|}{t^{q-1}}\ \text{is decreasing in} \ \R^+ \ \text{for} \ q \in [1,2]
\end{equation}
and $h_\Omega(x) + \rho_N \geq 0$ in $\Omega$ if $q \in [1,2)$.
     \end{enumerate}
     \end{theorem}
\begin{remark}
    Note that both the examples in Remark \ref{examples} with $\mu >1$ and $g \equiv 1$ satisfy \eqref{f-est-near-0} for any $\theta>0$ and satisfy \eqref{f-decr-prop} with $q=2.$
\end{remark}
\begin{remark}
    The uniqueness result in Theorem \ref{thm:regularity-uniqueness} (iii) corresponds to any non-negative weak solution of the problem \eqref{prob:logarithmic-laplace} with $\sigma \in (-\infty,0)$ as compared to the uniqueness result of least energy weak solution of the problem \eqref{prob:logarithmic-laplace} with $f \equiv 0$ in \cite[Theorem 6]{Angeles-Saldana-2023}. Moreover, in Theorem \ref{thm:regularity-uniqueness} (iii), we cover a large class of non-linearities $f$ satisfying \eqref{assump f1}, \eqref{assump f2}, \eqref{f-est-near-0} and \eqref{f-decr-prop}.
\end{remark}    
To study the asymptotic behavior of the solution of the problem \eqref{Eq:Problem}, we introduce the solution space and the corresponding energy functional. The natural solution space $H^s_0(\Omega)$, $s \in (0,1)$, is defined as
\[
\begin{split}
H^s_0(\Omega) = \{u\in H^s(\R^N): u=0  \ \text{in} \ \R^N\setminus \Omega\},
\end{split}
\]
where
\[
\begin{split}
H^s(\Omega) = \{u\in L^2(\Omega):  \frac{\abs{u(x)-u(y)}}{\abs{x-y}^{\frac{N}{2}+s}} \in L^2(\Omega \times \Omega)\}
\end{split}
\]
is the usual fractional Sobolev space defined in \cite{valdinoci}. The inner product and norm on $H^s_0(\Omega)$ are given, respectively, by
    \[\mathcal{E}_s(u,v) = \frac{c_{N,s}}{2}\intr \frac{(u(x)-u(y))(v(x)-v(y))}{\abs{x-y}^{N+2s}} ~dx ~dy\]
and  
\begin{equation}\label{Hs-norm}
    \|u\|_s=  [\mathcal{E}_s(u,u)]^\frac{1}{2}.
\end{equation}
The energy functional $E_s$: $H^s_0(\Omega) \to \R $ associated to \eqref{Eq:Problem} is given by
\[
E_s(u)= \frac{1}{2}\|u\|_s^2- \frac{1}{p(s)}\into a(s,x)\abs{u}^{p(s)} ~dx.\]
\begin{definition}
    A function $u_s \in H^s_0(\Omega)$ is said to be a ``weak solution" of \eqref{Eq:Problem} if
 \[
 \mathcal{E}_s(u_s,v) = \into a(s,x)\abs{u_s}^{p(s)-2}u_s v ~dx \quad 
    \text{for every} \ v\in H^s_0(\Omega).
\]
\end{definition}

The Nehari manifold consisting of all non-trivial solutions of \eqref{Eq:Problem} is given by 
\[
N_s = \{u\in H^s_0(\Omega)\setminus\{0\}: \|u\|^2_s=\into a(s,x)\abs{u}^{p(s)} ~dx\}.
\]
Moreover, a function $u\in N_s$ is called least energy solution of \eqref{Eq:Problem} if 
\[E_s(u)= \inf_{v\in N_s} E_s(v).\]
To study the asymptotics of the problem \eqref{Eq:Problem} with $p$ satisfying \eqref{regu:p} and sublinear or superlinear type non-linearity, we assume the following conditions on the function $a(\cdot,\cdot)$:
\begin{enumerate}[label=\textnormal{(a$_1$)},ref=\textnormal{a$_1$}]
	\item\label{weifunc:bound} There exist constants $c_1, c_2 >0$ (independent of $s$) such that 
 \[c_1 \leq \|a(s,\cdot)\|_{L^{\beta(s)}(\Omega)}^{\frac{1}{p(s)-2}} \leq  c_2 \quad \text{for all} \ s \in (0,\frac{1}{4}),\]
where $\beta:(0,\frac{1}{4}) \to \R$ be the exponent function such that 
       \begin{equation}
     \label{assumption on beta}
        \beta(s)> \max\l\{\frac{2_s^\ast}{2_s^\ast-p(s)}, 1+\frac{(N-2s)}{2s(\delta-\gamma)}\r\} \quad \text{for all $s \in (0, \frac{1}{4}),$}
     \end{equation} 
     for some $\gamma \in (0,\delta)$ and $\delta=1-\frac{Np'(0)}{4}.$
	\end{enumerate}
\begin{enumerate}[label=\textnormal{(a$_2)$},ref=\textnormal{a$_2$}]
	\item\label{a:regulrity-1-asym}  $\partial_s a(0,\cdot) \in L^\infty(\Omega).$
	\end{enumerate}
 \begin{enumerate}[label=\textnormal{(a$_3$)},ref=\textnormal{a$_3$}]
	\item\label{cond:a-upperbound} There exists a $c_3>0$ (independent of $s$) such that
  \[\|a(s,\cdot)\|_{L^\infty(\Omega)}^{\frac{1}{s}} \leq c_3 \quad \text{for all} \ s \in (0,\frac{1}{4}).\]
	\end{enumerate}
The upcoming two results correspond to the asymptotic behavior of solution of the non-local problem \eqref{Eq:Problem} with superlinear and sublinear growth non-linearities.  
\begin{theorem}
    \label{thm:asym-super} 
Let $p: [0,\frac{1}{4}] \to \R^+$ be a $C^1$ function satisfying $p(s)\in (2, 2^*_s)$, $p'(0) \in (0,\frac{4}{N})$ and $a : [0, \frac{1}{4}] \times \Omega \to \mathbb{R}$ satisfies \eqref{a:regulrity-asym}, \eqref{weifunc:bound} and \eqref{a:regulrity-1-asym}.
Let $u_{s}\in H^{s}_0(\Omega)$ be a least energy solution of the problem \eqref{Eq:Problem}.
Then, there is a least energy solution $u_0 \in \h(\Omega)\setminus\{0\}$ of the problem \eqref{Eq:LimitingProblem}
such that, passing to a subsequence, \[u_{s}\to u_0 \in L^2(\R^N) \ \text{as}\ s \to 0^+.\]
Also, \[
\lim_{s \to 0^+} \frac{E_{s}(u_{s})}{s} = \mathbb{E}(u_0) = \frac{p'(0)}{4} \|u_0\|^2_{L^2(\Omega)} 
\]
and
\[\lim_{s \to 0^+} \|u_{s}\|_{s}= \|u_0\|_{L^2(\Omega)}.\]
\end{theorem}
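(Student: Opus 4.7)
The plan is to carry out a compactness-and-passage-to-the-limit argument in four stages: uniform-in-$s$ bounds on the least-energy solutions $u_s$, extraction of a weak limit $u_0$, identification of $u_0$ as a weak solution of \eqref{Eq:LimitingProblem}, and finally verification that $u_0$ is a least-energy solution with the asserted energy identities.

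First, I would establish uniform bounds on $u_s$. Using the Nehari identity $\|u_s\|_s^2 = \into a(s,x)|u_s|^{p(s)} ~dx$ together with Hölder's inequality in the exponent $\beta(s)$ from \eqref{assumption on beta} and the fractional Sobolev embedding $H^s_0(\Omega)\hookrightarrow L^{2^*_s}(\Omega)$, one obtains the lower bound $\|u_s\|_s \geq c\sqrt{s}$ uniformly via \eqref{weifunc:bound}. To get a matching upper bound, I would use the minimax characterization $E_s(u_s) = \inf_{f\in\mathcal A_s}\max_{t\in[0,1]}E_s(f(t))$ and plug in a path built from a fixed $w\in N_{0,\sigma}$ (the Nehari manifold of \eqref{Eq:LimitingProblem}, which exists by Theorem \ref{Main-res-limitingprob}), exploiting the expansion
\[
\|v\|_s^2 = \|v\|_{L^2(\Omega)}^2 + 2s\,\mathcal{E}_L(v,v) + o(s), \qquad v\in\h(\Omega),
\]
from \cite[Lemma 3.7]{Santamaria-Saldana-2022}, together with \eqref{a:regulrity-asym}, \eqref{a:regulrity-1-asym} and $p'(0)<4/N$. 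This yields $E_s(u_s)/s \leq \mathbb{E}(w)+o(1)$ and hence, via $E_s(u_s)=\l(\tfrac{1}{2}-\tfrac{1}{p(s)}\r)\|u_s\|_s^2$, the bound $\|u_s\|_s^2 \leq Cs$. Combining again with the first-order expansion of $\|\cdot\|_s^2$, I extract $\|u_s\|_{L^2(\Omega)}\leq C$ and $\mathcal{E}_L(u_s,u_s)\leq C$, so that $u_s$ is bounded in $\h(\Omega)$ for $s$ small.

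Second, I pass to a weakly convergent subsequence $u_s\rightharpoonup u_0$ in $\h(\Omega)$ and strongly in $L^2(\R^N)$ (and hence a.e.) using the compactness $\h(\Omega)\hookrightarrow L^2(\Omega)$ and, crucially, Theorem \ref{thm:embd:results}(ii) applied with $\psi(t)=t^2\ln^\beta(e+t)$ for some $\beta\in(0,1)$, which gives strong convergence in $L^\psi(\Omega)$ and therefore tight control of the logarithmic nonlinearity. To identify the limiting equation I test \eqref{Eq:Problem} against $\ph\in C_c^\infty(\Omega)$, subtract $\into u_s\ph~dx$ from both sides, and divide by $s$: the left-hand side converges to $\mathcal{E}_L(u_0,\ph)$ via the bilinear version of the expansion above, and the right-hand side is split as
\[
\frac{a(s,x)-1}{s}|u_s|^{p(s)-2}u_s \ph + \frac{|u_s|^{p(s)-2}-1}{s}u_s\ph,
\]
which by the mean value theorem, \eqref{a:regulrity-asym}, \eqref{a:regulrity-1-asym}, $p\in C^1$, and dominated convergence (the dominant coming from the $L^\psi$-convergence) tends to $\into[a'(0,x)u_0+p'(0)u_0\ln|u_0|]\ph~dx$. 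Thus $u_0$ solves \eqref{Eq:LimitingProblem} in the weak sense.

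Third, I show $u_0\not\equiv 0$. This is where the sharp logarithmic Sobolev inequality (Proposition \ref{Log-Sobolev-ineq}) enters in a quantitative way: the lower bound $\|u_s\|_s\geq c\sqrt s$ combined with the expansion of $\|u_s\|_s^2$ yields $\|u_s\|_{L^2(\Omega)}^2 \geq c'>0$ for $s$ small, which passes to the limit thanks to $L^2$-strong convergence and forces $\|u_0\|_{L^2(\Omega)}>0$. The constraint $p'(0)<4/N$ is exactly what keeps the logarithmic absorption term subcritical relative to the Pitt inequality. Once $u_0\neq 0$, the least-energy property follows by pairing the upper bound $\limsup_{s\to 0^+}E_s(u_s)/s \leq \inf_{N_{0,\sigma}}\mathbb{E}$ from Step 1 with the lower bound $\liminf_{s\to 0^+}E_s(u_s)/s \geq \mathbb{E}(u_0)$, obtained via Fatou on the logarithmic term and weak lower semicontinuity of $\mathcal{E}_L$; Theorem \ref{Main-res-limitingprob} then forces equality and gives $\mathbb{E}(u_0) = \tfrac{p'(0)}{4}\|u_0\|_{L^2(\Omega)}^2$. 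The convergence of the squared norms $\|u_s\|_s^2/s \to 2\mathcal{E}_L(u_0,u_0)+2\rho_N\|u_0\|_{L^2}^2$-type identities extracted from $E_s(u_s)=\l(\tfrac{1}{2}-\tfrac{1}{p(s)}\r)\|u_s\|_s^2$ gives $\lim_{s\to 0^+}\|u_s\|_s = \|u_0\|_{L^2(\Omega)}$, and strong $L^2(\R^N)$-convergence is then promoted from weak convergence plus norm convergence.

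The main obstacle will be the nontriviality of $u_0$, since the logarithmic nonlinearity has no a priori sign and could allow $u_s$ to concentrate or vanish; controlling this requires the quantitative form of Pitt's inequality and the sharp embedding of Theorem \ref{thm:embd:results}, and is exactly where the cap $p'(0)<4/N$ is used. A secondary technical difficulty is the uniform control of the nonlinear tail $\tfrac{1}{s}\bigl(|u_s|^{p(s)-2}u_s - u_s\bigr)$, which behaves like $\ln|u_s|\,u_s$ in the limit and must be handled by truncation together with the compact embedding into $L^\psi(\Omega)$ provided by Theorem \ref{thm:embd:results}(ii).
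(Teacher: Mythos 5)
Your overall architecture (uniform bounds, weak limit, identification of the limiting equation, matching upper/lower energy bounds) matches the paper's, but Step 1 contains a scaling error that propagates through the rest of the argument. Since $\tfrac12-\tfrac1{p(s)}=\tfrac{p(s)-2}{2p(s)}=O(s)$, the minimax bound $E_s(u_s)/s\leq C$ combined with $E_s(u_s)=\l(\tfrac12-\tfrac1{p(s)}\r)\|u_s\|_s^2$ yields $\|u_s\|_s^2\leq C$, \emph{not} $\|u_s\|_s^2\leq Cs$; likewise the Nehari--H\"older--Sobolev computation gives $\|u_s\|_s\geq M$ for a constant $M>0$ independent of $s$ (this is Lemma \ref{lem:lowerbdd:NS}, and the constant is controlled precisely because $\kappa_{N,s}^{1/s}$ converges and \eqref{weifunc:bound} normalizes $\|a(s,\cdot)\|_{\beta(s)}^{1/(p(s)-2)}$), not $c\sqrt s$. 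As written, your bounds would force $\|u_s\|_s\to 0$, contradicting the assertion $\lim_{s\to0^+}\|u_s\|_s=\|u_0\|_{L^2(\Omega)}>0$, and your non-triviality argument ("$\|u_s\|_s\geq c\sqrt s$ plus the expansion gives $\|u_s\|_{L^2}^2\geq c'$") gives nothing in the limit. The paper's non-triviality proof instead interpolates $\into|u_s|^{p(s)\beta'(s)}~dx$ between $L^2$ and $L^{2_s^*}$ to convert the uniform lower bound $\|u_s\|_s\geq M$ into a uniform lower bound on $\|u_s\|_{L^2(\Omega)}$, which then passes to the limit by strong $L^2$-convergence; Proposition \ref{Log-Sobolev-ineq} plays no role there.

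A second genuine gap is the passage from $\sup_s\|u_s\|_s<\infty$ to $\sup_s\mathcal{E}(u_s,u_s)<\infty$. You propose to "combine with the first-order expansion of $\|\cdot\|_s^2$", but the expansion $\|v\|_s^2=\|v\|_{L^2}^2+s\,\mathcal{E}_L(v,v)+o(s)$ holds for each fixed $v$ with a remainder that is not uniform over the family $\{u_s\}$, so it cannot be applied to $v=u_s$. This is the technical heart of the paper's proof: it requires the intermediate weighted logarithmic Sobolev inequality of Lemma \ref{modified-4.11} (obtained by differentiating $F(s)=k(s)\|\ph\|_s^{p(s)}-\into a(s,x)|\ph|^{p(s)}~dx$ and using $F\geq F(0)=0$), combined with the Nehari identity and a careful splitting of $\int_{\R^N}|\xi|^{2s\theta}\ln|\xi|^2|\widehat{\ph}|^2~d\xi$ over $\{|\xi|\leq1\}$ and $\{|\xi|>1\}$ in Lemma \ref{lem:unif-bound}, where the smallness of $k(s\theta)p(s\theta)\|\ph\|_{s\theta}^{p(s\theta)-2}/2$ for small $s$ is what makes the high-frequency part coercive. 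Your identification of the limiting equation and the matching of upper and lower energy bounds are in the right spirit (the paper handles the nonlinear term via Lemma \ref{lem:log-conv} and Vitali's theorem rather than the $L^\psi$-embedding, but either route is plausible once the uniform $\h(\Omega)$-bound is in hand); the proposal is not acceptable, however, until the two gaps above are repaired.
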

\begin{theorem}
      \label{thm-asym-sub}
Let $p: [0,\frac{1}{4}] \to \R^+$ be a $C^1$ function satisfying $p(s)\in (1,2)$, $p'(0) \in (-\infty, 0)$ and $a$ satisfies \eqref{a:regulrity-asym}, \eqref{a:regulrity-1-asym} and \eqref{cond:a-upperbound}. Let $u_s$ be a positive least energy solution of \eqref{Eq:Problem}. Then, $u_s \to u_0$ in $L^q(\R^N)$ as $s \to 0^+$, \ for all $1\leq q< \infty$, where $u_0 \in \h(\Omega)\cap C(\R^N)$ is the unique, non-trivial, non-negative least energy solution of \eqref{Eq:LimitingProblem} satisfying \eqref{u_reg-sublinear}.
\end{theorem}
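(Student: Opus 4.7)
The plan mirrors the asymptotic strategy used for the superlinear case in Theorem \ref{thm:asym-super}, but the sublinear regime gains decisive simplifications from the uniqueness granted by Theorem \ref{thm:regularity-uniqueness}. Since $u_s\in N_s$, testing against $u_s$ itself yields $\|u_s\|_s^2 = \int_\Omega a(s,x)|u_s|^{p(s)}\,dx$, so $E_s(u_s) = (\tfrac12-\tfrac{1}{p(s)})\|u_s\|_s^2 < 0$ because $p(s)\in(1,2)$. For any $v\in C_c^\infty(\Omega)\setminus\{0\}$, I would pick $t_s(v)>0$ with $t_s v\in N_s$ and expand $E_s(t_s v)$ via \eqref{first-order-exp}, the $C^1$ regularity of $a(\cdot,x)$, and the $C^1$ regularity of $p$, to obtain $\limsup_{s\to 0^+} s^{-1}E_s(u_s)\leq \mathbb{E}(v)$. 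Choosing $v$ close to the least-energy solution $u_0$ furnished by Theorem \ref{them:existence-limiting} forces $s^{-1}|E_s(u_s)|$ to be uniformly bounded; combined with the expansion $\|u\|_s^2 = \|u\|_{L^2(\Omega)}^2 + s\,\mathcal{E}_L(u,u) + o(s)$ from \cite{Santamaria-Saldana-2022}, this controls both $\|u_s\|_{L^2(\Omega)}$ and $\mathcal{E}_L(u_s,u_s)$. Compactness of $\h(\Omega)\hookrightarrow L^2(\Omega)$ then extracts a subsequence with $u_s\to u_0$ in $L^2(\Omega)$ and weakly in $\h(\Omega)$.

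To identify $u_0$ as a weak solution of \eqref{Eq:LimitingProblem}, I would test the equation with a fixed $v\in C_c^\infty(\Omega)$, subtract $\int_\Omega u_s v\,dx$ from both sides, divide by $s$, and decompose
\[
\tfrac{1}{s}\bigl(a(s,x)|u_s|^{p(s)-2}u_s - u_s\bigr) = \tfrac{a(s,x)-1}{s}|u_s|^{p(s)-2}u_s + \tfrac{|u_s|^{p(s)-2}u_s - u_s}{s}.
\]
The first-order expansion \eqref{first-order-exp} handles the left-hand side, producing $\mathcal{E}_L(u_0,v)$ in the limit; the right-hand side is treated by dominated convergence (with dominating bound supplied by the uniform $L^\infty(\Omega)$ estimate described below, plus the $C^1$ dependence of $a$ and $p$ on $s$) and converges to $\int_\Omega(a'(0,x)u_0 + p'(0)u_0\ln|u_0|)\,v\,dx$. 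The uniform lower bound on $\|u_s\|_{L^2(\Omega)}$ forces $u_0\not\equiv 0$, and since $u_s\geq 0$ we get $u_0\geq 0$. Theorem \ref{thm:regularity-uniqueness} then identifies $u_0$ as the unique non-trivial non-negative weak solution of \eqref{Eq:LimitingProblem}, grants it the regularity \eqref{u_reg-sublinear}, and (by uniqueness) upgrades subsequential convergence to convergence of the entire family.

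Finally, to upgrade from $L^2$ to $L^q(\R^N)$ convergence for every $q\in[1,\infty)$, I would establish a uniform bound $\|u_s\|_{L^\infty(\Omega)}\le C$ (independent of $s\in(0,s_0)$) by running a De~Giorgi/Moser iteration on \eqref{Eq:Problem}; assumption \eqref{cond:a-upperbound} is precisely what prevents $\|a(s,\cdot)\|_{L^\infty(\Omega)}^{1/s}$ from diverging and keeps the iteration constants tame. Interpolating this $L^\infty$ bound against the strong $L^2$ convergence then yields the claimed $L^q$ convergence. The principal obstacle I expect is the uniform $L^\infty$ estimate as $s\to 0^+$: standard fractional Moser schemes rely on the embedding $H^s_0(\Omega)\hookrightarrow L^{2^*_s}(\Omega)$ whose critical exponent $2^*_s = 2N/(N-2s)$ degenerates to $2$, so the gain per iteration step vanishes and the constants must be tracked explicitly in $s$. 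A secondary delicate point is justifying the passage to the limit in $u_s\ln|u_s|$ despite $u_0$ vanishing on $\partial\Omega$ at the logarithmic rate \eqref{u_reg-sublinear}; a matching pointwise bound on $u_s$ near $\partial\Omega$ via barriers, in the spirit of \cite{Angeles-Saldana-2023}, completes the argument.
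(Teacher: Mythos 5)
Your overall architecture matches the paper's: energy comparison on the Nehari manifold, compactness in $L^2(\Omega)$ via a uniform $\h(\Omega)$ bound, identification of the limiting equation by testing against $C_c^\infty(\Omega)$ functions with the first-order expansion of $(-\Delta)^s$, a lower bound for non-triviality, a uniform $L^\infty$ estimate to upgrade to $L^q$, and Theorem \ref{thm:regularity-uniqueness} for uniqueness and \eqref{u_reg-sublinear}. However, two steps are genuinely gapped. First, you obtain the uniform bound on $\mathcal{E}_L(u_s,u_s)$ by applying the expansion $\|u\|_s^2=\|u\|_{L^2(\Omega)}^2+s\,\mathcal{E}_L(u,u)+o(s)$ to the \emph{varying} family $u_s$. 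That expansion is proved for a fixed function and its $o(s)$ term is not uniform over bounded sets of $\h(\Omega)$, so dividing by $s$ along $u_s$ is not justified. The paper's Theorem \ref{Theorem7} instead works with the exact identity $\frac{\|\ph\|_s^2-\|\ph\|_{L^2(\Omega)}^2}{s}=\int_0^1\int_{\R^N}|\xi|^{2s\tau}\ln|\xi|^2\,|\widehat{\ph}(\xi)|^2\,d\xi\,d\tau$ for approximations $\ph_n\to u_s$, bounds it from above using the equation (this is where the uniform $L^\infty$ bound of Proposition \ref{Prop1} is an essential \emph{input}) and from below by $\mathcal{E}_L(\ph_n,\ph_n)\geq\|\ph_n\|^2-C\|\ph_n\|_{L^2(\Omega)}^2$. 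Note that the logical order is the reverse of yours: the $L^\infty$ bound must come before the $\h(\Omega)$ bound, not after.

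Second, you leave the uniform $L^\infty$ estimate as an acknowledged obstacle (Moser iteration with $s$-tracked constants and a degenerating Sobolev exponent), but without it your argument does not close, since both the $\h(\Omega)$ bound and the passage to the limit in $u_s\ln|u_s|$ depend on it. The paper avoids iteration entirely: by \cite[Proposition 8.1]{Ros-oton-Serra-Valdinoci-2017} and \cite[Lemma 5]{Angeles-Saldana-2023} one has $\|u_s\|_{L^\infty(\Omega)}\leq(1+sc_1+o(s))\,M^s\,\|u_s\|_{L^\infty(\Omega)}^{p(s)-1}\,|\Omega|^{s^2/N}$, and since $p(s)-1<1$ this self-improving inequality yields $\|u_s\|_{L^\infty(\Omega)}\leq\bigl((1+sc_1+o(s))^{1/s}M|\Omega|^{s/N}\bigr)^{s/(2-p(s))}$, which converges to an explicit constant because $(2-p(s))/s\to -p'(0)>0$; assumption \eqref{cond:a-upperbound} enters precisely here. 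Your comparison $s^{-1}E_s(u_s)\leq\mathbb{E}(v)+o(1)$ does correctly give the non-triviality lower bound on $\|u_s\|_s^2$ (the sign $p'(0)<0$ flips the inequality), consistent with Lemma \ref{limits:bounds}, and the remaining steps (weak-solution identification, $L^q$ upgrade by interpolation/dominated convergence, uniqueness promoting subsequential to full convergence) coincide with the paper's.
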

The study of the asymptotic behavior in both the superlinear and sublinear regimes follow different strategies. In the superlinear case, the proof of Theorem \ref{thm:asym-super} relies on the variational method, uniform energy-derived estimates, and the logarithmic Sobolev inequality (see Proposition \ref{Log-Sobolev-ineq}). In contrast, the proof of Theorem \ref{thm-asym-sub} in the sublinear case utilizes sharp regularity bounds. This approach also employs direct integral estimates to establish a uniform bound for the solutions of equation \eqref{Eq:Problem} in the norm of \(\h(\Omega)\). This bound together with the compact embedding \(\h(\Omega) \hookrightarrow L^2(\Omega)\) provides the main compactness argument needed to characterize the limiting profile. 
\section{Embeddings in Orlicz spaces}\label{Sec:embed}
In this section, we prove new sharp continuous and compact embeddings of the energy space $\h(\Omega)$ into Orlicz spaces. We begin with introducing some preliminaries of the Orlicz spaces and then we look for an Orlicz space $L^\psi(\Omega)$ that compactly embeds $\h(\Omega).$ In order to claim the sharpness of our embedding, we construct a bounded class of functions in $\h(\Omega)$ and $L^\ph(\Omega)$ with appropriate logarithmic scaling parameters, which does not have a convergent subsequence in $L^\ph(\Omega)$ and the same class is not bounded in $L^\gamma(\Omega) \hookrightarrow L^\ph(\Omega).$
\subsection{Preliminaries of Orlicz spaces}
\begin{definition}
	Let  $\ph \colon (0,+\infty) \to \R$ be a function and $p,q>0$. We say that
	\begin{enumerate}
		\item[\textnormal{(i)}]
			$\ph$ is almost increasing, if there exists $a \geq 1$ such that $\ph(s) \leq a \ph(t)$ for all $0 < s < t$.
		\item[\textnormal{(ii)}]
			$\ph$ is almost decreasing, if there exists $a \geq 1$ such that $a \ph(s) \geq \ph(t)$ for all $0 < s < t$.
	\end{enumerate}
We say that $\ph$ satisfies the property
	\begin{enumerate}
		\item[\textnormal{(aInc)}$_p$]
			if $\frac{\ph(t)}{t^p}$ is almost increasing.
		\item[\textnormal{(aDec)}$_q$]
			if $\frac{\ph(t)}{t^q}$ is almost decreasing.
\end{enumerate}
\end{definition}
\begin{definition}
	A function $\ph$ is said to be a generalized weak $\Phi$-function (denoted by $\ph \in \Phi_w$) if $\ph$ satisfies \textnormal{(aInc)}$_1$ in $(0, +\infty)$.
    \end{definition}
Next, we recall some important relations between the norm and the modular.
\begin{lemma}
	\label{norm_modular}
    \cite[Lemma 3.2.9]{Harjulehto-Hasto-2019}\newline
	Let $\ph \colon [0,+\infty) \to [0,+\infty]$ be a generalized weak $\Phi$-function that satisfies \textnormal{(aInc)}$_p$ and \textnormal{(aDec)}$_q$, $1 \leq p \leq q < \infty$. Then
	\begin{align*}
 \min \bigg\{ {\l(\frac{1}{a}\varrho_\ph (u)\r)}^{\frac{1}{p}} , {\l(\frac{1}{a}\varrho_\ph (u)\r)}^{\frac{1}{q}} \bigg\}
		\leq \|u\|_\ph
		\leq \max \bigg\{ {\l(a\varrho_\ph (u)\r)}^{\frac{1}{p}} , {\l(a\varrho_\ph (u)\r)}^{\frac{1}{q}} \bigg\},
	\end{align*}
	for all $u \in M(\Omega)$, where $a$ is the maximum of the constants of \textnormal{(aInc)}$_p$ and \textnormal{(aDec)}$_q$.
\end{lemma}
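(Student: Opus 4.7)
The plan is to derive both inequalities directly from the scaling properties of $\ph$ encoded in (aInc)$_p$ and (aDec)$_q$, applied pointwise to $\ph(\lambda|u(x)|)$ and then integrated over $\Omega$ to produce modular bounds; the Luxemburg norm $\|u\|_\ph$ is then pinned down by its infimum characterization. Throughout, let $a \geq 1$ denote the common almost-monotonicity constant.

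The first step is to extract a two-sided scaling inequality for $\ph$ itself. Applying (aInc)$_p$ to the pair $\lambda t \leq t$ (valid when $0 < \lambda \leq 1$) gives $\ph(\lambda t) \leq a \lambda^p \ph(t)$, while applying (aDec)$_q$ to the pair $t \leq \lambda t$ (valid when $\lambda \geq 1$) gives $\ph(\lambda t) \leq a \lambda^q \ph(t)$. Reading the same conditions in the opposite direction produces the matching lower bounds, and combining them yields
\[
a^{-1}\min\{\lambda^p,\lambda^q\}\,\ph(t) \leq \ph(\lambda t) \leq a\max\{\lambda^p,\lambda^q\}\,\ph(t)
\]
for every $\lambda, t > 0$. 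Integrating pointwise with $t = |u(x)|$ transfers these to the modular:
\[
a^{-1}\min\{\lambda^p,\lambda^q\}\,\varrho_\ph(u) \leq \varrho_\ph(\lambda u) \leq a\max\{\lambda^p,\lambda^q\}\,\varrho_\ph(u).
\]

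For the upper bound on $\|u\|_\ph$, I set $\Lambda := \max\{(a\varrho_\ph(u))^{1/p},(a\varrho_\ph(u))^{1/q}\}$ and observe, by splitting the two cases $a\varrho_\ph(u) \geq 1$ and $a\varrho_\ph(u) \leq 1$, that $\min\{\Lambda^p,\Lambda^q\} = a\varrho_\ph(u)$. Applying the modular upper scaling with factor $1/\Lambda$ then gives $\varrho_\ph(u/\Lambda) \leq a\varrho_\ph(u)/\min\{\Lambda^p,\Lambda^q\} = 1$, so the infimum definition of the Luxemburg norm yields $\|u\|_\ph \leq \Lambda$. For the lower bound, I fix an arbitrary $\lambda > \|u\|_\ph$, so that $\varrho_\ph(u/\lambda) \leq 1$ by definition, and apply the modular upper scaling to $u = \lambda\cdot(u/\lambda)$ to get $\varrho_\ph(u) \leq a\max\{\lambda^p,\lambda^q\}$. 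Analyzing the cases $\lambda \geq 1$ and $\lambda \leq 1$ separately rearranges this to $\lambda \geq \min\{(\varrho_\ph(u)/a)^{1/p},(\varrho_\ph(u)/a)^{1/q}\}$, and sending $\lambda \searrow \|u\|_\ph$ completes the lower bound.

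The argument has no deep difficulty, but two technical points deserve care. First, the step $\varrho_\ph(u/\lambda) \leq 1$ at the infimal value $\lambda = \|u\|_\ph$ is not automatic in the full class $\Phi_w$; I sidestep this by working with $\lambda$ strictly larger than $\|u\|_\ph$ and passing to the limit, avoiding any appeal to left-continuity of $\ph$. Second, the degenerate case $\varrho_\ph(u) = 0$ must be handled separately, since then $u = 0$ a.e.\ and both sides of the claimed inequality vanish. The main obstacle will be purely bookkeeping: keeping track of which of $\lambda^p, \lambda^q$ realizes the $\min$ or the $\max$ across the two regimes $\lambda \geq 1$ and $\lambda \leq 1$, and ensuring the constant $a$ (the larger of the two almost-monotonicity constants) propagates correctly into the final two-sided bound.
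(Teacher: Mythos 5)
Your proof is correct and is the standard argument for this result; the paper itself gives no proof, citing \cite[Lemma 3.2.9]{Harjulehto-Hasto-2019}, and your derivation (pointwise scaling bounds $a^{-1}\min\{\lambda^p,\lambda^q\}\ph(t)\le\ph(\lambda t)\le a\max\{\lambda^p,\lambda^q\}\ph(t)$, integration to the modular, then the infimum characterization of the Luxemburg norm) is precisely the one in that reference. The care you take with the infimal value of $\lambda$ and with the degenerate case $\varrho_\ph(u)=0$ (where (aDec)$_q$ together with $\ph\not\equiv 0$ forces $\ph>0$ on $(0,\infty)$, hence $u=0$ a.e.) is appropriate and correct.
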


\begin{lemma}
	\label{epsilon}
    \cite[Lemma 3.1]{Arora-Crespo-Blanco-Winkert-2025} \newline
	The function $f_\eps \colon [0,+\infty) \to  [0,+\infty)$ given by
	\begin{align*}
		f_\eps(t) = \frac{ t^\eps } { \ln (e + t) }
	\end{align*}
	is almost increasing for $0 < \eps < \kappa :=e/(e + t_0)$ with constant $a_\eps :=\frac{f_{\eps}(t_{1, \eps})}{f_{\eps}(t_{2, \eps})} >1,$ where $t_0$ being the only positive solution of $t_0 = e \ln(e + t_0)$ and $t_{1,\eps}$, $t_{2, \eps}$ are respectively, the points of local maximum and minimum of $f_\eps$ in $(0, +\infty)$.   
\end{lemma}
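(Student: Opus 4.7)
The plan is to reduce the almost-monotonicity question to an analysis of the sign of a single auxiliary function, locate its critical points, and then conclude by a short case analysis on the three monotonicity intervals of $f_\eps$.

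First I would compute
\[
f_\eps'(t) = \frac{t^{\eps-1}\bigl[\eps(e+t)\ln(e+t)-t\bigr]}{(e+t)\,[\ln(e+t)]^2},
\]
so that the sign of $f_\eps'(t)$ on $(0,+\infty)$ coincides with the sign of
\[
g(t) := \eps(e+t)\ln(e+t)-t.
\]
I would then study $g$ by differentiation: $g'(t)=\eps\ln(e+t)+\eps-1$ vanishes exactly once, at $t_\ast := e^{(1-\eps)/\eps}-e$, and $g$ is strictly decreasing on $(0,t_\ast)$ and strictly increasing on $(t_\ast,+\infty)$. Since $g(0)=e\eps>0$ and $g(t)\to+\infty$ as $t\to+\infty$, the function $g$ will have either no zero, one double zero, or exactly two simple zeros on $(0,+\infty)$, depending on the sign of $g(t_\ast)$.

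The next step is to identify $\kappa=e/(e+t_0)$ as precisely the threshold beyond which $g(t_\ast)$ stays nonnegative. Writing the defining relation $t_0=e\ln(e+t_0)$ as $\kappa(e+t_0)\ln(e+t_0)=t_0$, one checks that the map $\eps\mapsto g(t_\ast(\eps))$ is strictly monotone and vanishes exactly at $\eps=\kappa$. Hence for every $\eps\in(0,\kappa)$ one has $g(t_\ast)<0$, so $g$ has two simple zeros $0<t_{1,\eps}<t_{2,\eps}$; consequently $f_\eps$ is strictly increasing on $(0,t_{1,\eps})$, strictly decreasing on $(t_{1,\eps},t_{2,\eps})$, and strictly increasing again on $(t_{2,\eps},+\infty)$, so that $t_{1,\eps}$ and $t_{2,\eps}$ are indeed the local maximum and local minimum of $f_\eps$ described in the statement, and the constant $a_\eps=f_\eps(t_{1,\eps})/f_\eps(t_{2,\eps})$ is strictly greater than $1$.

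Finally I would verify the almost-increasing inequality $f_\eps(s)\leq a_\eps f_\eps(t)$ for all $0<s<t$ by a short case split based on the positions of $s,t$ with respect to $t_{1,\eps}$ and $t_{2,\eps}$. On each of the monotonicity intervals the estimate is immediate from monotonicity alone; the only nontrivial cases are those where $s\leq t_{2,\eps}\leq t$ or $s\leq t_{1,\eps}\leq t$, and in each such case the global bound $f_\eps(s)\leq f_\eps(t_{1,\eps})=a_\eps f_\eps(t_{2,\eps})\leq a_\eps f_\eps(t)$ closes the argument. The main (and essentially only) obstacle is the precise identification of $\kappa$ as the cut-off for the sign of $g(t_\ast)$; once this is done the rest is routine calculus.
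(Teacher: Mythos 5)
Your plan is correct: the derivative computation, the reduction to the sign of $g(t)=\eps(e+t)\ln(e+t)-t$, the identification $g(t_\ast)=e-\eps e^{(1-\eps)/\eps}$ (which is strictly increasing in $\eps$ and vanishes exactly when $e^{(1-\eps)/\eps}=e+t_0$, i.e.\ at $\eps=\kappa$), and the final case split all check out. The paper does not prove this lemma but only cites it from an external reference, so there is no internal proof to compare against; your self-contained calculus argument is the natural one and establishes the stated result.
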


    \begin{corollary}
	\label{ln(e+t)}
	Let $\ph:[0,\infty) \to [0,\infty)$ be given by
\[\ph(t)=t^2 \ln (e+t),\] then, it fulfills \textnormal{(aDec)}$_{2 + \eps}$, for $0 < \eps < \kappa$ and with constant $a_\eps$, where $\kappa$ and $a_\eps$ are the same as in Lemma \ref{epsilon}.
\end{corollary}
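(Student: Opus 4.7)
The plan is to reduce the claim directly to Lemma \ref{epsilon} by observing the pointwise identity
\[
\frac{\ph(t)}{t^{2+\eps}} \;=\; \frac{t^2 \ln(e+t)}{t^{2+\eps}} \;=\; \frac{\ln(e+t)}{t^\eps} \;=\; \frac{1}{f_\eps(t)} \qquad (t>0),
\]
so that the almost-decreasing property of $\ph(t)/t^{2+\eps}$ is equivalent to the almost-increasing property of $f_\eps$, which has already been established in Lemma \ref{epsilon} for $0 < \eps < \kappa$ with constant $a_\eps$.

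The second and only remaining step is the general fact that, for a strictly positive function $g \colon (0,\infty)\to(0,\infty)$, if $g$ is almost increasing with constant $a\geq 1$, then $1/g$ is almost decreasing with the same constant $a$. Indeed, for $0<s<t$, the inequality $g(s) \leq a\, g(t)$ rearranges to $\frac{1}{g(t)} \leq a \cdot \frac{1}{g(s)}$, which is precisely the almost-decreasing condition. Applying this with $g=f_\eps$ gives
\[
\frac{\ph(t)}{t^{2+\eps}} \;=\; \frac{1}{f_\eps(t)} \;\leq\; a_\eps \cdot \frac{1}{f_\eps(s)} \;=\; a_\eps \cdot \frac{\ph(s)}{s^{2+\eps}} \qquad (0<s<t),
\]
yielding the (aDec)$_{2+\eps}$ property with constant $a_\eps$.

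There is no real obstacle here: the result is a formal rewriting of Lemma \ref{epsilon}, so the proof amounts to the two displays above together with a one-line verification of the ``reciprocal'' principle. The only mild subtlety is to note that $f_\eps(t) > 0$ for $t>0$ (which is clear since $\ln(e+t) \geq 1$), so that taking reciprocals is legitimate.
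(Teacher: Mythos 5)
Your proof is correct and is exactly the intended argument: the paper states this as an immediate corollary of Lemma \ref{epsilon} without writing out a proof, and the reduction via $\ph(t)/t^{2+\eps} = 1/f_\eps(t)$ together with the reciprocal principle is precisely the implicit reasoning. Nothing is missing.
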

\subsection{Continuous and compact embeddings}
Let $\ph:[0,\infty) \to [0,\infty)$ be given by
\[\ph(t)=t^2 \ln (e+t).\]
Then, $\ph$ is a generalized convex $\Phi$-function and satisfies \textnormal{(aInc)}$_2$ and \textnormal{(aDec)}$_{2+\eps}$ for $0< \eps < \kappa,$ where $\kappa$ is defined in Lemma \ref{epsilon}.
For the above choice of $\ph$, the space
\[L^\ph(\Omega)= \bigg\{u\in M(\Omega) \,:\, \into |\lambda u|^2 \ln (e+|\lambda u|) ~dx < \infty \ \text{for some} \ \lambda>0\bigg\}\] equipped with the norm
\[\|u\|_\ph := \inf\bigg\{\lambda>0 \,:\, \into \abs{\frac{u}{\lambda}}^2\ln\l(e+\abs{\frac{u}{\lambda}}\r) ~dx \leq 1 \bigg\}\]
is a Banach space (see \cite[Theorem 3.3.7]{Harjulehto-Hasto-2019}). 
\begin{theorem}\label{thm:emb-cts}(Embeddings in Orlicz space)
The space $\h(\Omega)$ is continuously embedded in $L^\ph(\Omega).$ 
\end{theorem}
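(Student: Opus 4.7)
My plan is to use the Luxemburg-norm characterization: it suffices to exhibit a constant $C_0>0$, independent of $u\in\h(\Omega)\setminus\{0\}$, such that $\varrho_\ph(u/(C_0\|u\|))\leq 1$; this immediately yields $\|u\|_\ph\leq C_0\|u\|$. First I would derive a pointwise control on $\ph$ by splitting at $t=1$ (using $\ln(e+t)\leq \ln(e+1)$ for $t\leq 1$ and $e+t\leq (e+1)t$ for $t\geq 1$), obtaining $\ph(t)\leq (1+\ln 2)t^2 + t^2(\ln t)_+$. Integrating and using the identity $\int_{\{|u|\geq 1\}} u^2 \ln|u|\,dx = \into u^2 \ln|u|\,dx + \int_{\{|u|<1\}} u^2 |\ln|u||\,dx$ gives
\[
\varrho_\ph(u) \leq (1+\ln 2)\|u\|_{L^2}^2 + \into u^2 \ln|u|\,dx + \int_{\{|u|<1\}} u^2 |\ln|u||\,dx.
\]
The middle integral is exactly what Pitt's inequality (Proposition \ref{Log-Sobolev-ineq}) controls:
\[
\into u^2 \ln|u|\,dx \leq \tfrac{N}{4}\mathcal{E}_L(u,u) + \ln\|u\|_{L^2}\|u\|_{L^2}^2 + \tfrac{Na_N}{4}\|u\|_{L^2}^2.
\]
For the ``small-values'' remainder I would use the sharpened pointwise bound $t^2|\ln t|\leq \tfrac{1}{\alpha e}t^{2-\alpha}$ on $(0,1]$ (for any fixed $\alpha\in(0,1)$) and Hölder's inequality to obtain $\int_{\{|u|<1\}} u^2|\ln|u||\,dx \leq C_\alpha |\Omega|^{\alpha/2}\|u\|_{L^2}^{2-\alpha}$.

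Next I would translate $\mathcal{E}_L(u,u)$ back to $\|u\|^2=\mathcal{E}(u,u)$ via the decomposition \eqref{quadratic-form}. Since $u$ is supported in the bounded set $\Omega$ and $|x-y|^{-N}\leq 1$ on $\{|x-y|\geq 1\}$, the ``far-field'' piece obeys
\[
\Bigl|\intr u(x)u(y)j(x-y)\,dx\,dy\Bigr| \leq \|u\|_{L^1(\Omega)}^2 \leq |\Omega|\,\|u\|_{L^2}^2,
\]
so $\mathcal{E}_L(u,u)\leq \|u\|^2 + (c_N|\Omega|+|\rho_N|)\|u\|_{L^2}^2$. Combining with the already available continuous embedding $\h(\Omega)\hookrightarrow L^2(\Omega)$, say $\|u\|_{L^2}\leq K\|u\|$, yields $\mathcal{E}_L(u,u)\leq C_1\|u\|^2$ for some $C_1=C_1(N,|\Omega|)$. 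Assembling all pieces gives
\[
\varrho_\ph(u) \leq C_2 \|u\|^2 + K^2\|u\|_{L^2}^2\,\ln\|u\|_{L^2} + C_3 \|u\|^{2-\alpha}.
\]

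Finally I would normalize by taking $v=u/\lambda$ with $\lambda = C_0\|u\|$ and choosing $C_0\geq \max(1,K)$; then $\|v\|=1/C_0\leq 1$ and $\|v\|_{L^2}\leq K/C_0\leq 1$, which forces $\ln\|v\|_{L^2}\leq 0$, so the middle term above contributes non-positively. Applying the preceding inequality to $v$ collapses it to $\varrho_\ph(v)\leq C_2/C_0^2 + C_3/C_0^{2-\alpha}$, and choosing $C_0$ large enough (depending only on $N,|\Omega|,K,\alpha$) makes this $\leq 1$. Therefore $\|u\|_\ph\leq \lambda = C_0\|u\|$.

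The main obstacle is the delicate handling of the term $\ln\|u\|_{L^2}\,\|u\|_{L^2}^2$ arising from Pitt: its sign depends on whether $\|u\|_{L^2}$ is above or below $1$, and for small $u$ it is very negative. The key device is the normalization $v=u/\lambda$ with $\lambda$ chosen so that $\|v\|_{L^2}\leq 1$, which neutralizes this term while keeping $\|v\|\leq 1$. It is equally essential that the $\{|v|<1\}$ remainder be estimated by $\|v\|_{L^2}^{2-\alpha}$ rather than the crude constant $|\Omega|/(2e)$, because the latter would not vanish as $C_0\to\infty$ and would preclude getting $\varrho_\ph(v)\leq 1$ for large domains.
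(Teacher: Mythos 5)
Your proof is correct, and its analytic core coincides with the paper's: both arguments reduce the modular $\varrho_\ph(u)$ to the term $\into u^2\ln|u|\,dx$, control that term by Pitt's inequality (Proposition \ref{Log-Sobolev-ineq}), convert $\mathcal{E}_L(u,u)$ back to $\mathcal{E}(u,u)$ by bounding the far-field bilinear form via $|x-y|^{-N}\leq 1$ on $\{|x-y|\geq 1\}$, and finish with the embedding $\h(\Omega)\hookrightarrow L^2(\Omega)$. The difference lies in how the modular estimate is turned into a statement about the norm. The paper only shows that the modular is bounded on bounded subsets of $\h(\Omega)$ and then invokes Lemma \ref{norm_modular} (which rests on $\ph$ satisfying \textnormal{(aInc)}$_2$ and \textnormal{(aDec)}$_{2+\eps}$) to pass from modular bounds to norm bounds; this lets it treat the sign-indefinite term $\ln(\|u\|_{L^2(\Omega)})\|u\|_{L^2(\Omega)}^2$ and the small-value remainder crudely, since any finite upper bound suffices. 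You instead work directly from the definition of the Luxemburg norm, normalizing $v=u/(C_0\|u\|)$ so as to force $\varrho_\ph(v)\leq 1$, which yields the explicit linear bound $\|u\|_\ph\leq C_0\|u\|$ without the modular--norm comparison lemma, at the price of the two refinements you correctly single out: the normalization must make $\ln\|v\|_{L^2(\Omega)}\leq 0$, and the $\{|v|<1\}$ remainder must be estimated by $\|v\|_{L^2(\Omega)}^{2-\alpha}$ so that it vanishes as $C_0\to\infty$. (The stray factor $K^2$ in front of the logarithmic term in your assembled inequality is harmless, since that term is discarded as non-positive after normalization.) Both routes are valid; yours is slightly more self-contained and gives an explicit operator-norm constant, while the paper's is shorter because it outsources the final step to Lemma \ref{norm_modular}.
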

\begin{proof} 
To prove the claim, we show that the inclusion map $\mathcal{I}: \h(\Omega) \to L^\ph(\Omega)$ defined by $\mathcal{I}(u) = u$ maps bounded sets into bounded sets. Let $A$ be a bounded subset of $\mathbb{H}(\Omega)$. Therefore, there exists a constant $C_1>0$, such that
\begin{equation}\label{embd:est-0}
    \|u\|^2 = \mathcal{E}(u,u) \leq C_1 \quad \text{for all} \ u\in A.
\end{equation}
Then, in view of Lemma \ref{norm_modular}, it is enough to show that there exists a $C_2 >0 $ such that
\begin{equation}\label{embd:est-1}
    \into \ph(|u|) ~dx = \into |u|^2 \ln (e+|u|) ~dx \leq C_2 \quad  \text{for all} \ u \in A.
\end{equation} 
Let $u \in \h(\Omega).$ Using \eqref{eq:relation:embd}, \eqref{logineq}, Proposition \ref{Log-Sobolev-ineq}, and \cite[Lemma 3.4]{Santamaria-Saldana-2022}, we obtain 
\begin{equation}\label{embd:est-2}
    \begin{split}
   \into |u|^2 & \ln (e+|u|) ~dx \\
   &= \into |u|^2 \ln (|u|) ~dx + \into |u|^2 \ln \l(1+\frac{e}{|u|}\r) ~dx \\
   &\leq \frac{N}{4}\l(\mathcal{E}_L(u,u)+\frac{4}{N}\ln(\|u\|_{L^2(\Omega)})\|u\|_{L^2(\Omega)}^2 + a_N \|u\|_{L^2(\Omega)}^2\r) + e \|u\|_{L^1(\Omega)}\\
   & \leq \frac{N}{4}\mathcal{E}(u,u)-\frac{N c_N}{4} \intb \frac{u(x)u(y)}{|x-y|^N} ~dx ~dy\\
   & \qquad \qquad  +\frac{N (\rho_N + a_N)}{4} \|u\|_{L^2(\Omega)}^2 + \ln(\|u\|_{L^2(\Omega)}) \|u\|_{L^2(\Omega)}^2 + e |\Omega|^\frac{1}{2} \|u\|_{L^2(\Omega)}\\
   & \leq \frac{N}{4}\mathcal{E}(u,u) +\frac{N (\rho_N + a_N)}{4} \|u\|_{L^2(\Omega)}^2 + \ln(\|u\|_{L^2(\Omega)}) \|u\|_{L^2(\Omega)}^2 \\
   & \quad + e |\Omega|^\frac{1}{2} \|u\|_{L^2(\Omega)}. 
   \end{split}
\end{equation} 
Finally, by using \eqref{embd:est-0} and the compact embedding of $\h(\Omega)$ in $L^2(\Omega)$ in \eqref{embd:est-2}, we obtain the claim in \eqref{embd:est-1}.
\end{proof}
Next, we study the compactness of the energy space $\h(\Omega)$ in Orlicz space. For this purpose, let $\psi:[0,\infty) \to [0,\infty)$ be a generalized convex $\Phi$-function satisfying \eqref{compact:cond}.
Then, the set given by
\[L^\psi(\Omega)= \bigg\{u\in M(\Omega) \,:\, \into \psi(|\lambda u|) ~dx < \infty \ \text{for some} \ \lambda>0\bigg\}\] 
equipped with the norm
\[\|u\|_\psi := \inf\bigg\{\lambda>0 \,:\, \into \psi\l(\abs{\frac{u}{\lambda}}\r) ~dx \leq 1 \bigg\}\]
is a Banach space (see \cite[Theorem 3.3.7]{Harjulehto-Hasto-2019}).
\begin{theorem}\label{thm:compact}
    The embedding $\h(\Omega)\hookrightarrow L^\psi(\Omega)$ is compact.
\end{theorem}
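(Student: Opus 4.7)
The plan is to use the compact embedding $\h(\Omega) \hookrightarrow L^2(\Omega)$ to extract a strongly $L^2$-convergent subsequence, then promote this to convergence in $L^\psi(\Omega)$ by a modular split into a ``small value'' part controlled by the $L^2$ convergence and a ``tail'' part controlled by the uniform bound on $\varrho_\ph$ already furnished by the proof of Theorem \ref{thm:emb-cts}. Concretely, take a bounded sequence $\{u_n\} \subset \h(\Omega)$; by the known compact embedding \cite{Correa-DePablo-2018, Laptev-Weth-2021}, pass to a (not relabeled) subsequence with $u_n \to u$ in $L^2(\Omega)$ and a.e.\ in $\Omega$. Since $\h(\Omega)$ is a Hilbert space, $u \in \h(\Omega)$, the sequence $\{u_n - u\}$ is bounded in $\h(\Omega)$, and the argument used for Theorem \ref{thm:emb-cts} gives a constant $M$, independent of $n$, with
\begin{equation*}
    \varrho_\ph(u_n - u) = \into |u_n - u|^2 \ln(e + |u_n - u|)\,dx \leq M.
\end{equation*}

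I will prove $\varrho_\psi\bigl(\lambda(u_n - u)\bigr) \to 0$ for every $\lambda > 0$; by the unit-ball property of the Luxemburg norm this yields $\|u_n - u\|_\psi \leq 1/\lambda$ eventually, and since $\lambda$ is arbitrary, $\|u_n - u\|_\psi \to 0$. Fix $\lambda > 0$ and $\eps > 0$. By the tail condition in \eqref{compact:cond} choose $T = T(\eps) > 0$ with $\psi(t) \leq \eps\,\ph(t)$ for $t \geq T$, and split
\begin{equation*}
    \varrho_\psi\bigl(\lambda(u_n - u)\bigr) = \int_{\{|u_n - u| \leq T/\lambda\}} \psi\bigl(\lambda|u_n - u|\bigr)\,dx + \int_{\{|u_n - u| > T/\lambda\}} \psi\bigl(\lambda|u_n - u|\bigr)\,dx.
\end{equation*}
On the first set, convexity of $\psi$ with $\psi(0)=0$ forces $s \mapsto \psi(s)/s$ to be non-decreasing, so $\psi(s) \leq (\psi(T)/T)\,s$ for $s \in (0,T]$; the first integral is then bounded by $\lambda(\psi(T)/T)\,\|u_n - u\|_{L^1(\Omega)}$, which tends to $0$ by $L^2$-convergence and H\"older's inequality. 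On the second set, $\psi(\lambda|u_n - u|) \leq \eps\,\ph(\lambda|u_n - u|)$, and the elementary inequality $\ln(e + \lambda t) \leq \ln(e+\lambda) + \ln(e+t)$ (valid for all $\lambda, t \geq 0$) gives $\ph(\lambda t) \leq \lambda^2 \ln(e+\lambda)\,t^2 + \lambda^2\,\ph(t)$, so the tail is at most $\eps\,\lambda^2\bigl[\ln(e+\lambda)\,\|u_n - u\|_{L^2(\Omega)}^2 + M\bigr]$. Letting $n \to \infty$ and then $\eps \to 0^+$ closes the argument.

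The main obstacle is that $\psi$ is not assumed to satisfy a $\Delta_2$-type doubling condition, so norm convergence in $L^\psi(\Omega)$ cannot be read off from the vanishing of a single modular $\varrho_\psi(u_n - u)$; this is why I must control $\varrho_\psi$ at \emph{every} positive scale $\lambda$. A secondary technical point is that the Young function $\ph(t) = t^2 \ln(e+t)$ is not homogeneous, which I circumvent via the sub-additive logarithm inequality above to separate the scaling factor $\lambda$ from $\ph$ itself.
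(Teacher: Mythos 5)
Your proof is correct and follows essentially the same strategy as the paper: extract an $L^2$-convergent subsequence via the known compact embedding $\h(\Omega)\hookrightarrow L^2(\Omega)$, then split the modular of $\psi(|u_n-u|)$ into a small-value region controlled by $L^1$-convergence and a tail region where $\psi \leq \eps\,\ph$ and the uniform bound on $\varrho_\ph(u_n-u)$ from the continuity estimate applies. The one genuine refinement on your side is the passage from modular to norm convergence: the paper invokes Lemma \ref{norm_modular}, which formally requires $\psi$ to satisfy an \textnormal{(aDec)}$_q$ condition not listed among the hypotheses, whereas you sidestep any doubling assumption by showing $\varrho_\psi(\lambda(u_n-u))\to 0$ for \emph{every} $\lambda>0$ and reading off $\|u_n-u\|_\psi\to 0$ from the unit-ball property of the Luxemburg norm; your sub-additivity inequality $\ln(e+\lambda t)\leq\ln(e+\lambda)+\ln(e+t)$ correctly separates the scale $\lambda$ from $\ph$. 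Your use of convexity of $\psi$ (monotonicity of $\psi(s)/s$) to control the region $\{|u_n-u|\leq T/\lambda\}$ replaces the paper's two-region split near the origin and is equally valid, since the hypothesis that $\lim_{t\to 0^+}\psi(t)/t$ is finite is automatic for a finite-valued convex $\Phi$-function.
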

\begin{proof}
Consider the inclusion map
\[
\mathcal{I}: \h(\Omega) \to L^\psi(\Omega) \quad \text{defined as} \quad \mathcal{I}(u)=u.
\]
Note that by Theorem \ref{thm:emb-cts} and \cite[Theorem 3.2.6]{Harjulehto-Hasto-2019}, we have $\h(\Omega) \hookrightarrow  L^\ph(\Omega) \hookrightarrow L^\psi(\Omega).$ Therefore, the inclusion map is well defined and continuous. Let $\{f_n\}_{n \in \mathbb{N}}$ be a bounded sequence in $\h(\Omega).$ By the reflexivity of $\h(\Omega)$ and \cite[Theorem 2.1]{Correa-DePablo-2018}, there exists a subsequence $\{f_n\}_{n \in \N}$ (denoted with same notations) such that $f_n \rightharpoonup f$ in $\h(\Omega)$ and $f_n \to f$ in $L^2(\Omega).$  In order to show the compactness of the above inclusion map, in view of Lemma \ref{norm_modular}, it is enough to show that for every $\eps >0$ there exists a $n_0 \in \mathbb{N}$ such that 
\[
\into \psi(|f_n-f|) ~dx \leq \eps \quad \text{for all} \ n \geq n_0.\]
Note that by \eqref{compact:cond}, for every $\eps_1>0$, there exist $\delta_1, \delta_2>0$ such that \begin{equation}\label{limits:est}
    \psi(t)\leq C(\epsilon_1) t  \quad \text{for} \ 0 < t < \delta_1 \quad \text{and} \quad \psi(t)\leq \eps_1 t^2\ln(e+t) \quad \text{for} \ t>\delta_2.
\end{equation}
Now, by using the fact that $\psi$ is an increasing function and \eqref{limits:est}, we obtain
\[
\begin{split}
     \into \psi(|f_n-f|) ~dx & \leq C(\epsilon_1) \int_{\{|f_n-f|<\delta_1\}} |f_n-f| ~dx\\
   & \qquad + \frac{\psi(\delta_2)}{\delta_1}\int_{\{\delta_1\leq |f_n-f| \leq \delta_2\}} |f_n-f|  ~dx\\ &\qquad + \eps_1 \int_{\{|f_n-f|>\delta_2\}} |f_n-f|^2 \ln (e+|f_n-f|) ~dx \\
   &\leq \l(C(\epsilon_1)  + \frac{\psi(\delta_2)}{\delta_1} \r)\|f_n-f\|_{L^1(\Omega)} \\ &\qquad  + \eps_1 \into |f_n-f|^2 \ln (e+|f_n-f|) ~dx.
\end{split}
\]
Since $f_n \to f$ in $L^1(\Omega)$, for every $\eps >0$, there exists a $n_0 \in \mathbb{N}$ such that
\begin{equation}\label{limits:est-1}
\|f_n-f\|_{L^1(\Omega)} \leq \frac{\eps}{2 \l(C(\epsilon_1)  + \frac{\psi(\delta_2)}{\delta_1} \r)} \quad \text{for all} \ n \geq n_0 
\end{equation}
Moreover, by using Theorem \ref{thm:emb-cts} and the fact that $\{f_n\}_{n \in \mathbb{N}}$ is a bounded sequence in $\h(\Omega)$, we get
\begin{equation}\label{limits:est-2}
\into |f_n-f|^2 \ln (e+|f_n-f|) ~dx \leq C \quad \text{for some} \ C>0.
\end{equation}
Finally, by using \eqref{limits:est-1}-\eqref{limits:est-2} and taking $\eps_1= \frac{\eps}{2C}$ in \eqref{limits:est}, we obtain the claim. Hence, the compact embedding of $\h(\Omega)$ in $L^\psi(\Omega)$ follows.
\end{proof}

\subsection{Sharpness of embeddings}
\begin{theorem}\label{them:non-compact}
Let $\Omega\subseteq \R^N$ be a bounded, convex domain. Then, the embedding $\h(\Omega) \hookrightarrow L^\ph(\Omega)$ is not compact.
\end{theorem}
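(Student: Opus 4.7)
The strategy is to exhibit a concentrating, logarithmically-rescaled family that is bounded in $\h(\Omega)$ but stays away from $0$ in $L^\ph(\Omega)$. Since $0 \in \Omega$ is open, choose $r_0 > 0$ with $\overline{B_{r_0}(0)} \subset \Omega$, and fix a nonnegative $u \in C_c^\infty(B_{r_0/2}(0))$ with $u \not\equiv 0$. For $\lambda_n \to \infty$, define
\[
u_n(x) := c_n \, u(\lambda_n x), \qquad c_n := \frac{\lambda_n^{N/2}}{\sqrt{\ln \lambda_n}}.
\]
For $n$ large, $\supp u_n \subset B_{r_0/(2\lambda_n)}(0) \subset \Omega$, so $u_n \in \h(\Omega)$. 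The logarithmic correction $(\ln \lambda_n)^{-1/2}$ in $c_n$ is the key scaling parameter, chosen precisely to compensate the non-scale-invariance of the truncated kernel $k$.

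To check boundedness in $\h(\Omega)$, the change of variables $z = \lambda_n x$, $w = \lambda_n y$ yields
\[
\mathcal{E}(u_n,u_n) = \frac{c_n^2}{\lambda_n^N} \cdot \frac{c_N}{2} \iint_{|z-w| < \lambda_n} \frac{(u(z)-u(w))^2}{|z-w|^N} \, dz\, dw.
\]
I would split the inner integral at $|z-w| = 1$. The near-diagonal piece equals $\mathcal{E}(u,u) < \infty$. For the far piece, the compact support of $u$ forces $(u(z)-u(w))^2 = u(z)^2$ or $u(w)^2$ whenever $|z-w|$ exceeds the diameter of $\supp u$; a polar-coordinate computation then gives $c_N \omega_{N-1} \|u\|_{L^2}^2 \ln \lambda_n + O(1)$ as $n \to \infty$. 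Multiplying by $c_n^2/\lambda_n^N = 1/\ln \lambda_n$ shows $\mathcal{E}(u_n, u_n) = O(1)$, so $\{u_n\}$ is bounded in $\h(\Omega)$.

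The same change of variables yields $\|u_n\|_{L^2(\Omega)}^2 = \|u\|_{L^2}^2/\ln \lambda_n \to 0$, while
\[
\varrho_\ph(u_n) = \frac{1}{\ln \lambda_n} \int_{\mathbb{R}^N} |u(z)|^2 \ln\bigl(e + c_n |u(z)|\bigr) \, dz \; \longrightarrow \; \frac{N}{2} \|u\|_{L^2}^2 > 0,
\]
because $\ln c_n = \frac{N}{2}\ln \lambda_n - \frac{1}{2}\ln\ln \lambda_n$, and on any sublevel $\{u \geq \varepsilon\}$ of positive measure one has $\ln(e+c_n u(z)) \sim \ln c_n$. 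By Corollary \ref{ln(e+t)} and Lemma \ref{norm_modular}, this forces $\liminf_n \|u_n\|_\ph > 0$. On the other hand, $\ph(t) \geq t^2$ gives the continuous inclusion $L^\ph(\Omega) \hookrightarrow L^2(\Omega)$, so any subsequence converging in $L^\ph$ must converge in $L^2$ to $0$; Lemma \ref{norm_modular} would then force the modular to vanish, contradicting the lower bound. Hence $\h(\Omega) \hookrightarrow L^\ph(\Omega)$ cannot be compact.

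The main obstacle is the analysis of $\mathcal{E}(u_n,u_n)$: the kernel $k$ is homogeneous of degree $-N$ in the near zone but is sharply truncated at $|x-y|=1$, so dilation produces an unavoidable $\ln \lambda_n$ contribution from the intermediate zone $1 \leq |z-w| < \lambda_n$. Identifying the correct logarithmic normalization $c_n = \lambda_n^{N/2}(\ln\lambda_n)^{-1/2}$ is what makes $\{u_n\}$ simultaneously bounded in $\h(\Omega)$ and bounded below in $L^\ph(\Omega)$, and this is the ``non-trivial logarithmic scaling parameter'' that encodes the criticality of the target Orlicz space.
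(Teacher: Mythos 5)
Your proposal is correct and follows essentially the same route as the paper: the same dilated family $u(kx)$ with the same logarithmic normalization $(k^N/\ln k)^{1/2}$, the same splitting of the energy integral at $|z-w|=1$ to extract the $\ln\lambda_n$ contribution from the intermediate zone, and the same lower bound on the modular $\varrho_\ph(u_n)$. The only cosmetic difference is in closing the argument — you use $\|u_n\|_{L^2}\to 0$ together with $L^\ph(\Omega)\hookrightarrow L^2(\Omega)$, whereas the paper uses a.e.\ convergence of $u_k$ to $0$ against the nontriviality of the weak limit — and both are valid.
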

\begin{proof}
Without loss of generality, we can assume that $\Omega$ contains 0. Let $k\in\N$ and $u\in \h(\Omega)$ such that $\into |u|^2 \ln (e+|u|) ~dx =1.$ For $k \in \mathbb{N} \setminus \{1\}$, define
\begin{equation}\label{def:seq-of:functions}
    u_k(x)= \begin{cases} 
      {\l(\frac{k^N}{\ln k}\r)}^\frac{1}{2}u(kx) & x\in \frac{\Omega}{k}:=\{\frac{x}{k} \,:\, x \in \Omega\} \\
      0 & \text{otherwise}
   \end{cases}
\end{equation}
such that $u_k \to 0$ a.e. in $\Omega.$ \vspace{0.1cm}\\
\textbf{Claim 1:} The sequence $\{u_k\}$ is bounded in $\h(\Omega).$ \vspace{0.1cm}\\
Note that
\[
\|u_k\|^2 = \frac{c_N k^N}{2 \ln k} \inta \frac{(u(kx)-u(ky))^2}{|x-y|^N} ~dx ~dy.
\]
By change of variables $s=kx$ and $t=ky$ and the symmetry of the integrand, we obtain
\[
\begin{split}
    \|u_k\|^2 
    & = \frac{c_N}{2 \ln k} \iint_{s,t \in \R^N,\ |s-t|\leq k} \frac{(u(s)-u(t))^2}{|s-t|^N} ~ds ~dt\\
    & = \frac{c_N}{2 \ln k} \bigg(\iint_{s,t \in \R^N, \ |s-t|\leq 1} \frac{(u(s)-u(t))^2}{|s-t|^N} ~ds ~dt\\ & \qquad 
 \qquad \qquad + \iint_{s,t \in \R^N, \ 1 < |s-t|\leq k} \frac{(u(s)-u(t))^2}{|s-t|^N} ~ds ~dt\bigg)\\
 & = \frac{1}{\ln k} \|u\|^2 +  \frac{c_N}{2 \ln k} \iint_{s,t \in \R^N, \ 1 < |s-t|\leq k} \frac{(u(s)-u(t))^2}{|s-t|^N} ~ds ~dt\\
 & \leq \frac{1}{\ln k} \|u\|^2 + \frac{c_N}{\ln k}\iint_{s,t \in \R^N,\ 1 < |s-t|\leq k} \frac{(u(s))^2+(u(t))^2}{|s-t|^N} ~ds ~dt \\
 & \leq \frac{1}{\ln k} \|u\|^2 +\frac{2 c_N}{\ln k} \int_{s\in \R^N}(u(s))^2 \l(\int_{t \in  \overline{B_k(s)} \setminus \overline{B_1(s)} } \frac{dt}{|s-t|^N}\r) ~ds\\
 & = \frac{1}{\ln k} \|u\|^2 +\frac{2 c_N}{\ln k} \int_{s\in \R^N}(u(s))^2 \l(\int_{y \in  \overline{B_k(0)} \setminus \overline{B_1(0)}} \frac{dy}{|y|^N}\r) ~ds\\
 & = \frac{1}{\ln k} \|u\|^2 +\frac{2N \omega_N c_N}{\ln k} \|u\|_{L^2(\Omega)}^2 \l(\int_1^k \frac{dr}{r}\r) \\
 &\leq  \frac{1}{\ln 2} \|u\|^2 +2N \omega_N c_N\|u\|_{L^2(\Omega)}^2 < \infty,
\end{split}
\]
where $\omega_N$ denotes the volume of the unit ball in $\mathbb{R}^N.$\\
\textbf{Claim 2:} There exists a $C>0$ and $k_0 \in \mathbb{N} \setminus \{1\}$ such that $\|u_k\|_\ph \geq C$ for all $k \geq k_0.$ \vspace{0.1cm}\\
For $\beta \in (0,1),$ let $\Omega_\beta:= \{x\in \Omega \ | \  |u(x)| \geq \beta \}.$ For every $u \in \h(\Omega) \setminus \{0\}$, we can choose a $\beta = \beta(u) >0$ small enough such that $\Omega_\beta$ is non-empty. Then, we have
\[
\begin{split}
    \into |u_k|^2 \ln (e+|u_k|) ~dx &= \frac{k^N}{\ln k}\into |u(kx)|^2 \ln \l(e+{\l(\frac{k^N}{\ln k}\r)}^\frac{1}{2}|u(kx)|\r)  ~dx \\
    & = \frac{1}{\ln k} \into |u(y)|^2  \ln \l(e+{\l(\frac{k^N}{\ln k}\r)}^\frac{1}{2}|u(y)|\r)  ~dy\\
    & \geq \frac{1}{\ln k} \int_{\Omega_\beta} |u(y)|^2  \ln \l(e+{\l(\frac{k^N}{\ln k}\r)}^\frac{1}{2}|u(y)|\r)  ~dy\\
    & \geq \frac{1}{\ln k} \int_{\Omega_\beta} \beta^2  \ln \l(e+{\l(\frac{k^N}{\ln k}\r)}^\frac{1}{2}\beta\r)  ~dy\\
    & \geq \frac{\beta^2}{2\ln k} \ln {\l(\frac{k^N \beta ^2}{\ln k}\r)} \ |\Omega_\beta|\\
    & = \frac{\beta^2 |\Omega_\beta|}{2} \bigg(N+\frac{\ln \beta^2}{\ln k}-\frac{\ln(\ln k)}{\ln k}\bigg).
\end{split}
\]
Note that $\frac{\ln(\ln k)}{\ln k} \to 0$ as $k \to \infty.$ Thus, there exist constants $C_0 >0$ and $k_0 \in \mathbb{N} \setminus \{1\}$ depending upon $\beta, N$ such that 
\[
N+\frac{\ln \beta^2}{\ln k}-\frac{\ln(\ln k)}{\ln k} \geq C_0 >0 \quad \text{for all} \ k \geq k_0.
\]
This further implies,
\begin{equation}\label{lower:est:norm}
    \into |u_k|^2 \ln (e+|u_k|) ~dx\geq C(N, \beta):= C_0 \frac{\beta^2 |\Omega_\beta|}{2} \quad \text{for all} \ k \geq k_0.
\end{equation}
Now, we proceed by contradiction. Suppose that the embedding  $\h(\Omega) \hookrightarrow L^\ph(\Omega)$ is compact. By \textbf{Claim 1}, Lemma \ref{norm_modular} and  \eqref{lower:est:norm}, there exists a $\tilde{u} \in \mathbb{H}(\Omega) \setminus \{0\}$ such that $u_k \rightharpoonup \tilde{u}$ in $\h(\Omega)$, $u_k \to \tilde{u}$ in $L^\ph(\Omega) \cap L^2(\Omega)$ and $u_k \to \tilde{u} \neq 0$ a.e. in $\Omega.$ But this contradicts the fact that $u_k \to 0$ a.e. in $\Omega$ and $\tilde{u} = 0$ a.e. in $\Omega.$
Hence, the theorem is proved.
\end{proof}
Next we are going to show that the space $\h(\Omega)$ is not continuously embedded in a smaller space $L^\gamma(\Omega)$ than $L^\ph(\Omega)$. In particular, we show that the inclusion map
\[
\mathcal{I}: \h(\Omega) \to L^\gamma(\Omega) \quad \text{defined as} \quad \mathcal{I}(u)=u
\]
does not map bounded sets to bounded sets. Let $\gamma \colon [0,\infty) \to [0,\infty)$ be a generalized convex $\Phi$-function satisfying \eqref{embd:cond:smaller}. Then, $L^\gamma(\Omega)$ is a Banach space associated with $\gamma.$
   It is straightforward to see  via \cite[Theorem 3.2.6]{Harjulehto-Hasto-2019} that $L^\gamma(\Omega) \hookrightarrow L^\ph(\Omega).$
\begin{theorem}
Let $\Omega\subseteq \R^N$ be a bounded and convex domain.  Then, $\h(\Omega)$ is not continuously embedded in  $L^\gamma(\Omega).$
\end{theorem}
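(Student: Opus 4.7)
The plan is to reuse the concentrating family $\{u_k\}$ from the proof of Theorem \ref{them:non-compact}. Fix $u \in \h(\Omega) \setminus \{0\}$ compactly supported in $\Omega$, and set
\[
u_k(x) := \l(\frac{k^N}{\ln k}\r)^{1/2} u(kx)
\]
on $\Omega/k$, extended by zero elsewhere, as in \eqref{def:seq-of:functions}. Claim 1 of that proof already establishes $\sup_k \|u_k\| < \infty$. Consequently, if the inclusion $\h(\Omega) \hookrightarrow L^\gamma(\Omega)$ were continuous, there would be a constant $M > 0$ with $\|u_k\|_\gamma \leq M$ for all $k$, and the definition of the Luxemburg norm together with the monotonicity of $\gamma$ would force $\varrho_\gamma(u_k/\lambda_0) \leq 1$ for some fixed $\lambda_0 > 0$ (e.g.\ $\lambda_0 = M+1$) and every $k$. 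The goal is to contradict this by showing $\varrho_\gamma(u_k/\lambda) \to \infty$ for every $\lambda > 0$.

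By the remark following Theorem \ref{thm:sharp:embd:results}, condition \eqref{embd:cond:smaller} holds for every $\beta' > 0$, so I may pick $\beta > 0$ for which $\Omega_\beta := \{y \in \Omega : |u(y)| \geq \beta\}$ has positive Lebesgue measure (achievable by suitably rescaling $u$ if necessary). Setting $\ell_k := (k^N/\ln k)^{1/2}/\lambda$, the change of variables $y = kx$ together with the monotonicity of $\gamma$ yields
\[
\varrho_\gamma\!\l(\frac{u_k}{\lambda}\r) = \frac{1}{k^N}\into \gamma\!\l(\ell_k\, |u(y)|\r) dy \geq \frac{|\Omega_\beta|}{k^N}\, \gamma(\ell_k \beta) = |\Omega_\beta|\,\frac{\gamma(\ell_k\beta)}{\ph(\ell_k\beta)}\cdot \frac{\ph(\ell_k\beta)}{k^N}.
\]

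A direct asymptotic computation with $\ph(t) = t^2\ln(e+t)$ gives
\[
\frac{\ph(\ell_k\beta)}{k^N} = \frac{\beta^2}{\lambda^2\ln k}\,\ln(e + \ell_k\beta) \longrightarrow \frac{N\beta^2}{2\lambda^2} > 0 \quad \text{as } k \to \infty,
\]
since $\ln(e + \ell_k\beta) = \tfrac{N}{2}\ln k + O(\ln\ln k)$. On the other hand $\ell_k \to \infty$, and hypothesis \eqref{embd:cond:smaller} then yields $\gamma(\ell_k\beta)/\ph(\ell_k\beta) \to \infty$. Combining these, $\varrho_\gamma(u_k/\lambda) \to \infty$ for every $\lambda > 0$, contradicting the bound $\varrho_\gamma(u_k/\lambda_0) \leq 1$ obtained from the hypothesized continuous embedding.

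The argument is essentially an asymptotic calculation; the only mild subtlety is aligning the free parameter $\beta$ in \eqref{embd:cond:smaller} with the bump profile $u$, which is resolved by the remark after Theorem \ref{thm:sharp:embd:results}. Heuristically, the energy-preserving rescaling $(k^N/\ln k)^{1/2} u(kx)$ keeps the $L^\ph$-modular of order one, but any Orlicz target growing strictly faster than $\ph$ at infinity in the sense of \eqref{embd:cond:smaller} is inevitably overwhelmed, which is precisely the sharpness assertion complementing Theorem \ref{them:non-compact}.
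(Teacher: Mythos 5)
Your proposal is correct and follows essentially the same route as the paper: the same rescaled family $u_k$ from Theorem \ref{them:non-compact}, the same change of variables, and the same use of \eqref{embd:cond:smaller} (via the remark that it holds for every $\beta$) to force the $\gamma$-modular to blow up while $\|u_k\|$ stays bounded. Your passage from norm boundedness to $\varrho_\gamma(u_k/\lambda_0)\leq 1$ via the Luxemburg definition is in fact slightly cleaner than the paper's direct assumption of a uniform modular bound, but the substance of the argument is identical.
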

\begin{proof}
Without loss of generality, we can assume that $\Omega$ contains 0. Let $u \in \h(\Omega)$ such that $\Omega_\beta:=\l\{ x \in \Omega: |u(x)| \geq \beta \r\} \neq \emptyset.$ Define 
\[
A_u= \{u_k : k \in \mathbb{N} \setminus \{1\}\},
\]
where $\{u_k\}$ is defined in \eqref{def:seq-of:functions}. By \textbf{Claim 1} in Theorem \ref{them:non-compact}, the set $A_u$ is bounded in $\h(\Omega).$ Next, we show that the set $A_u$ is not bounded in $L^\gamma(\Omega).$
On the contrary, we assume that there exists a constant $C>0$ such that
   \begin{equation}
   \label{bdd_gamma}
   \into \gamma (|u_k|) ~dx \leq C \quad \text{for all} \  u_k \in A_u.
   \end{equation}  
The property \eqref{embd:cond:smaller} implies that for every $M>0$, there exists a $\ell_0 \gg e$ depending upon $M$ and $\beta$ such that
\begin{equation}\label{lower:est:gamma1}
    \gamma(\ell \beta)> M \ph(\ell \beta) \quad \text{for all} \ |\ell|>\ell_0.
\end{equation} 
Choose $k_0 \in \mathbb{N}$ large enough such that 
\[
\l(k^N/\ln k\r)^\frac{1}{2} \frac{|u(x)|}{\beta} \geq \l(k^N/\ln k\r)^\frac{1}{2} > \ell_0 \quad \text{for all} \ k \geq k_0 \ \text{and} \ x \in \Omega_\beta.
\]
Applying change of variables and using \eqref{lower:est:gamma1} for $k \geq k_0$, we obtain
\begin{equation}\label{lower:est:gamma2}
    \begin{split}
   \into \gamma(|u_k|)~dx & = \frac{1}{k^N} \into \gamma\l({\l(\frac{k^N}{\ln k}\r)}^\frac{1}{2} |u(x)|\r)~dx \\
   & \geq \frac{1}{k^N} \int_{\Omega_\beta} \gamma\l({\l(\frac{k^N}{\ln k}\r)}^\frac{1}{2} |u(x)|\r)~dx \\
   & > \frac{M}{\ln k} \int_{\Omega_\beta} |u|^2 \ln \l(e+{\l(\frac{k^N}{\ln k}\r)}^\frac{1}{2} \beta \frac{|u(x)|}{\beta}\r) ~dx\\
   &  > \frac{M \ln \l[\beta{\l(\frac{k^N}{\ln k}\r)}^{\frac{1}{2}}\r]}{\ln k}\int_{\Omega_\beta} |u(x)|^2  ~dx + \frac{M}{\ln k}\int_{\Omega_\beta} |u(x)|^2 \ln \l(\frac{|u(x)|}{\beta} \r) ~dx  \\
       &> M \frac{|\Omega_\beta| \beta^2}{2} \l(N - \frac{\ln(\ln k)}{\ln k}+\frac{2\ln \beta}{\ln k}\r).
   \end{split}
\end{equation}
Note that $\frac{\ln(\ln k)}{\ln k} - \frac{ 2 \ln \beta}{\ln k} \to 0$ as $k \to \infty,$ {\it i.e.,} there exists a $k_1 \in \mathbb{N}$ large enough such that 
\[
\frac{\ln(\ln k)}{\ln k} - \frac{ 2 \ln \beta}{\ln k} \leq \frac{N}{2} \quad \text{for all} \ k \geq k_1.
\]
Finally, by choosing $k \geq \max\{k_0, k_1\}$ and $M = \frac{8C}{\beta^2 |\Omega_\beta|N}$ in \eqref{lower:est:gamma1} and \eqref{lower:est:gamma2}, we get
\[\into \gamma(|u_k|)~dx > M \frac{|\Omega_\beta| \beta^2 N}{2} > 2C\]
which is a contradiction. Hence, $A_u$ is not bounded in $L^\gamma(\Omega)$ and $\h(\Omega)$ is not continuously embedded in $L^\gamma(\Omega).$
\end{proof}

\section{Brezis-Nirenberg type problem involving the logarithmic Laplacian}\label{brezis-Nirenberg}
We begin this section by recalling the fractional Sobolev inequality (see \cite[Theorem 1.1]{Cotsiolis-Tavoularis-2004}) and the sharp logarithmic Sobolev inequality (see \cite[Proposition 3.8]{Santamaria-Saldana-2022}).  
\begin{theorem}
    \label{Frac_Sobo_ineq}
{(Fractional Sobolev inequality)} Let $N\geq 1, s\in (0,\frac{N}{2})$, and  $2^*_s:= \frac{2N}{N-2s}$. Then,
\[
\begin{split}
\|u\|^2_{L^{2^*_s}(\Omega)} \leq \kappa_{N,s}\|u\|^2_s \quad \text{ for all}\  u\in H^s(\R^N),
\end{split}
\] 
where $\kappa_{N,s}$ is defined as
\[ 
\begin{split}
\kappa_{N,s} = 2^{-2s} \pi^{-s}\frac{\Gamma(\frac{N-2s}{2})}{\Gamma(\frac{N+2s}{2})}.{\l(\frac{\Gamma(N)}{\Gamma(\frac{N}{2})}\r)}^{\frac{2s}{N}}.
\end{split}
\]
\end{theorem}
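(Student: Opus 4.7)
The plan is to deduce this sharp Sobolev inequality from the sharp Hardy--Littlewood--Sobolev (HLS) inequality via Riesz potentials and duality. By Plancherel's identity applied to the Fourier symbol $\abs{\xi}^{2s}$ of $(-\Delta)^s$, one has
\[
\|u\|_s^2 \;=\; C_{N,s}\int_{\R^N}\abs{\xi}^{2s}\abs{\widehat{u}(\xi)}^2\,d\xi \;=\; C_{N,s}\norm{(-\Delta)^{s/2}u}_{L^2(\R^N)}^2.
\]
Setting $v := (-\Delta)^{s/2}u$, one has $u = I_s v$, where $I_s$ is convolution with a constant multiple of $\abs{x}^{-(N-2s)}$ (the Riesz potential of order $s$). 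The Sobolev inequality then becomes
\[
\norm{I_s v}_{L^{2^*_s}(\R^N)} \;\leq\; C'_{N,s}\norm{v}_{L^2(\R^N)},
\]
which is exactly the HLS inequality with parameters $p = 2$, $q = 2^*_s$ and Riesz index $N-2s$. So the strategy reduces the question to identifying the sharp HLS constant and pulling it back through the semigroup property of $I_s$.

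The second step invokes Lieb's sharp form of HLS. Its classical proof reduces first to radial nonincreasing functions via the Riesz rearrangement inequality, then transplants the problem to $S^N$ by stereographic projection, where the functional is conformally invariant. A compactness-plus-symmetry argument on the sphere pins down the optimizers, which, pulled back to $\R^N$, form the bubble family $v(x) = c\l(a^2+\abs{x-x_0}^2\r)^{-(N+2s)/2}$. Applying $I_s$ to this family (explicitly computable on bubbles) yields the Sobolev extremals $u(x) = c'\l(a^2+\abs{x-x_0}^2\r)^{-(N-2s)/2}$, so equality in the fractional Sobolev inequality is attained precisely on the orbit of a single bubble under translation, dilation and scalar multiplication.

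The final step is a direct computation of $\kappa_{N,s}$ on one representative, say $a = 1$, $x_0 = 0$: both $\norm{u}^2_{L^{2^*_s}(\R^N)}$ and $\norm{u}_s^2$ can be evaluated in closed form using the beta integral
\[
\int_{\R^N}(1+\abs{x}^2)^{-\alpha}\,dx \;=\; \pi^{N/2}\,\frac{\Gamma(\alpha-N/2)}{\Gamma(\alpha)},
\]
together with the Fourier transform of $(1+\abs{x}^2)^{-(N-2s)/2}$ in terms of modified Bessel functions; Legendre's duplication formula then collapses the gamma quotients into the stated value of $\kappa_{N,s}$. The main obstacle is the sharp HLS inequality itself: no purely analytic manipulation yields the sharp constant, and even the Frank--Lieb proof by reflection symmetry (which avoids rearrangement) is substantially more involved than anything else in the present paper. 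Since the statement is quoted verbatim from \cite{Cotsiolis-Tavoularis-2004}, the cleanest route in context is to record it as such and invoke Lieb's theorem as the underlying ingredient.
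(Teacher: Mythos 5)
The paper does not prove this statement; it simply recalls it from \cite[Theorem 1.1]{Cotsiolis-Tavoularis-2004}, which is exactly the route you settle on, and your sketch (Plancherel, Riesz potential, Lieb's sharp HLS, bubble extremals, gamma-function evaluation of $\kappa_{N,s}$) is an accurate outline of how that cited result is actually established. So your proposal is correct and in effect takes the same approach as the paper.
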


\begin{proposition}
    \label{Log-Sobolev-ineq}
    (Sharp logarithmic Sobolev inequality)
For every $u\in \h(\Omega)$,
\[
\begin{split}
\frac{4}{N} \into \ln (|u|)u^2 ~dx \leq \mathcal{E}_L(u,u)+\frac{4}{N}\ln (\|u\|_{L^2(\Omega)})\|u\|^2_{L^2(\Omega)} + a_N \|u\|^2_{L^2(\Omega)},
\end{split}
\]
where
\[a_N:= \frac{2}{N} \ln \l(\frac{\Gamma(N)}{\Gamma(\frac{N}{2})}\r) - \ln (4\pi) - 2\psi \l(\frac{N}{2}\r) \\
\text{and} \ \psi \ \text{is the digamma function.}
\]
Moreover, 
\[
\|u\|_{L^2(\Omega)}^2 \leq S_{ln} \|u\|^2, \quad S_{ln}:= \inf_{u \in \mathbb{H}(\Omega), \|u\|_{L^2(\Omega)}=1} \mathcal{E}(u,u).
\]
\end{proposition}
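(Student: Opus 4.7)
The plan is to deduce the sharp logarithmic Sobolev inequality by differentiating at $s = 0^+$ the sharp fractional Sobolev inequality of Cotsiolis-Tavoularis, stated in Theorem \ref{Frac_Sobo_ineq}. For every $u \in \h(\Omega)$ and $s \in (0,1/2)$, the inequality $\|u\|_{L^{2^*_s}(\Omega)}^2 \leq \kappa_{N,s}\|u\|_s^2$ degenerates at $s=0$ into $\|u\|_{L^2(\Omega)}^2 \leq \|u\|_{L^2(\Omega)}^2$, so all information resides in the first-order correction. I would subtract $\|u\|_{L^2(\Omega)}^2$ from both sides, divide by $s > 0$, and pass to the limit $s \to 0^+$.

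For the left-hand side, with $p(s) := 2^*_s$ so that $p(0) = 2$ and $p'(0) = 4/N$, differentiating $\|u\|_{L^{p(s)}(\Omega)}^2 = F(s)^{2/p(s)}$ with $F(s) = \into |u|^{p(s)}\, dx$ at $s=0$ gives
\[
\lim_{s\to 0^+}\frac{\|u\|_{L^{2^*_s}(\Omega)}^2 - \|u\|_{L^2(\Omega)}^2}{s} = \frac{4}{N}\l(\into |u|^2 \ln|u|\, dx - \|u\|_{L^2(\Omega)}^2 \ln\|u\|_{L^2(\Omega)}\r).
\]
Finiteness of $\into |u|^2 \ln|u|\, dx$ and the dominations needed for differentiating under the integral sign in $F(s)$ both follow from the continuous embedding $\h(\Omega) \hookrightarrow L^\ph(\Omega)$ of Theorem \ref{thm:emb-cts}. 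For the right-hand side I would combine the two expansions
\[
\|u\|_s^2 = \|u\|_{L^2(\Omega)}^2 + s\,\mathcal{E}_L(u,u) + o(s) \quad \text{and} \quad \kappa_{N,s} = 1 + s\, a_N + o(s).
\]
The first is \eqref{first-order-exp} tested against $u$, made quantitative in \cite[Lemma 3.7]{Santamaria-Saldana-2022}; the second follows from $(\ln\Gamma)' = \psi$ by computing
\[
\frac{d}{ds}\bigg|_{s=0}\ln \kappa_{N,s} = -2\ln 2 - \ln \pi - 2\psi\l(\frac{N}{2}\r) + \frac{2}{N}\ln\frac{\Gamma(N)}{\Gamma(N/2)} = a_N,
\]
which is precisely how the dimensional constant $a_N$ enters the statement. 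Multiplying yields $\kappa_{N,s}\|u\|_s^2 = \|u\|_{L^2(\Omega)}^2 + s\bigl(a_N\|u\|_{L^2(\Omega)}^2 + \mathcal{E}_L(u,u)\bigr) + o(s)$, and dividing the rearranged inequality by $s$ and sending $s \to 0^+$ produces the claim.

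The \emph{moreover} part is a routine Poincaré-type consequence of the definition of $S_{ln}$ together with the compact embedding $\h(\Omega) \hookrightarrow L^2(\Omega)$ recalled after \eqref{quadratic-form}: the infimum is attained by a nonzero function and is therefore strictly positive, after which $\mathcal{E}(u,u) \geq S_{ln}\|u\|_{L^2(\Omega)}^2$ follows by scaling (up to the standard convention on whether $S_{ln}$ or $S_{ln}^{-1}$ appears on the right).

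The main technical obstacle is converting the family of inequalities at positive $s$ into a single inequality at $s = 0$: this requires uniform-in-$s$ control of the remainders, not merely pointwise vanishing. For the Lebesgue term, I would split $\Omega = \{|u|\leq 1\} \cup \{|u|>1\}$ and dominate $s^{-1}(|u|^{p(s)}-|u|^2)$ on each piece, using the elementary bound $|u|^{p(s)} \leq |u|^2$ on the small set and $|u|^{p(s)} \leq |u|^{2^*_{1/4}}$ on the large set, together with the Orlicz embedding of Theorem \ref{thm:emb-cts}. For the energy side the uniform bound is supplied by \cite[Lemma 3.7]{Santamaria-Saldana-2022}. Once these dominations are in place, the rest of the argument is purely algebraic.
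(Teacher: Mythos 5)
The paper itself does not prove this proposition; it is recalled verbatim from \cite[Proposition 3.8]{Santamaria-Saldana-2022}, so the relevant comparison is with that reference. Your overall strategy --- observe that the sharp fractional Sobolev inequality of Theorem \ref{Frac_Sobo_ineq} degenerates to an identity at $s=0$ and extract the first-order term in $s$ --- is exactly the standard route and is the one taken in the cited source. Your two derivative computations are correct: $\frac{d}{ds}\big|_{s=0}\|u\|^2_{L^{2^*_s}(\Omega)}=\frac{4}{N}\l(\into|u|^2\ln|u|\,dx-\|u\|^2_{L^2(\Omega)}\ln\|u\|_{L^2(\Omega)}\r)$ and $\frac{d}{ds}\big|_{s=0}\ln\kappa_{N,s}=a_N$, and combined with $\|u\|_s^2=\|u\|_{L^2(\Omega)}^2+s\,\mathcal{E}_L(u,u)+o(s)$ they yield precisely the stated inequality. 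Your remark on the ``moreover'' part is also fair: as written the constant should appear as $S_{ln}^{-1}$, or equivalently the infimum should be taken of $\|u\|^2_{L^2(\Omega)}$ over $\|u\|=1$.

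There is, however, one genuine logical problem. You justify the finiteness of $\into|u|^2\ln|u|\,dx$ and the dominations needed to differentiate under the integral sign by invoking the Orlicz embedding of Theorem \ref{thm:emb-cts}. In this paper that embedding is itself proved \emph{using} Proposition \ref{Log-Sobolev-ineq} (see the chain of estimates \eqref{embd:est-2}), so your argument is circular as written. The circularity is easily removed, and the removal is what the cited reference does: first establish the inequality for $u\in C_c^\infty(\Omega)$ (or bounded $u$ of compact support), where $s\mapsto\into|u|^{p(s)}\,dx$ is manifestly finite and smooth and all remainders are controlled by $\|u\|_{L^\infty(\Omega)}$ and $|\supp u|$; then extend to general $u\in\h(\Omega)$ by density, using that $t\mapsto t^2\ln t$ is bounded below --- so Fatou's lemma applies to the left-hand side along an a.e.\ convergent approximating sequence --- while every term on the right-hand side is continuous under convergence in $\h(\Omega)$ (via \cite[Lemma 3.4]{Santamaria-Saldana-2022} and the compact embedding into $L^2(\Omega)$). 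With that substitution for the domination step, the rest of your argument goes through.
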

\subsection{Existence result for \eqref{prob:logarithmic-laplace} with $\sigma \in (0, \frac{4}{N})$} Throughout this subsection, we assume $\sigma \in (0, \frac{4}{N}).$ The assumptions \eqref{assump f1}-\eqref{assump f3} imply that for a given $\eps>0$ there exist constants $C>0$ and $C_0>0$, depending upon $\eps$, $\|g\|_{L^\infty(\Omega)}$ such that
\begin{equation}
\label{assumption2}
0 \leq |f(x,t)| \leq C_0 |t|+\eps |t| {|\ln|t||} \quad \text{for all} \ t \in \mathbb{R} \ \text{and} \ x \in \Omega
\end{equation}
and
\begin{align}
\label{assump F1}
    \begin{split}
   0 \leq |F(x,t)| \leq  C t^2 + \eps t^2 |\ln|t||  \quad \text{for all} \ t \in \mathbb{R} \ \text{and} \ x \in \Omega.
    \end{split}
\end{align}
By following the same arguments as in \cite[Lemma 3.9]{Santamaria-Saldana-2022} and using \eqref{assump f1} and \eqref{assump f2}, we have the following lemma.
\begin{lemma}\label{lem:C^1regu}
    Let $\mathbb{E}$ be given by \eqref{def:enerfunct:plambda} and $f$ satisfies \eqref{assump f1} and \eqref{assump f2}. 
    Then, the energy functional $\mathbb{E}$ is of class $C^1$ in $\mathbb{H}(\Omega)$ and 
    \[
    (\mathbb{E}'(u), v)_{\mathbb{H}} = \mathcal{E}_L(u,v) - \into (f(x,u) +\sigma u \ln\abs{u}) v  ~dx \quad  \text{for  every} \ v\in \h(\Omega).
    \]
    In particular, $\mathbb{E}'(u)\in \mathcal{L}(\h(\Omega),\R) $ for any $u \in \h(\Omega)$ and  $\mathbb{E}':\h(\Omega) \to \h(\Omega)$ is continuous. Here, $\mathcal{L}(\h(\Omega),\R)$ represents the set of all bounded linear functionals on $\h(\Omega)$.
\end{lemma}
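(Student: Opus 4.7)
The plan is to decompose
\[
\mathbb{E}(u) = \tfrac{1}{2}\mathcal{E}_L(u,u) - \tfrac{1}{2}\into\omega(x)u^2\,dx - \tfrac{\sigma}{2}\into u^2\ln|u|\,dx + \tfrac{\sigma}{4}\into u^2\,dx
\]
and handle the four summands separately. The three quadratic pieces are $C^1$ on $\h(\Omega)$ in a standard way: by \eqref{quadratic-form}, $\mathcal{E}_L(\cdot,\cdot)$ is a continuous symmetric bilinear form on $\h(\Omega)\times\h(\Omega)$, while $\omega\in L^\infty(\Omega)$ together with the continuous embedding $\h(\Omega)\hookrightarrow L^2(\Omega)$ takes care of the zero-order terms. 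Their Fr\'echet derivatives in direction $v$ are $\mathcal{E}_L(u,v)$, $\into \omega u v\,dx$ and $\tfrac{\sigma}{2}\into u v\,dx$, with continuous dependence on $u$. The whole analysis therefore reduces to the genuinely nonlinear term $T(u):=\into u^2\ln|u|\,dx$.

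Let $h(r):=r^2\ln|r|$ (with $h(0)=0$), so $h\in C^1(\R)$ with $h'(r)=2r\ln|r|+r$. A pair of elementary case-splits at $|r|=1$ yields the pointwise bounds
\[
|h(r)|\leq 1+\varphi(|r|),\qquad |h'(r)|\leq C\bigl(1+|r|\ln(e+|r|)\bigr),
\]
where $\varphi(t)=t^2\ln(e+t)$ is the $\Phi$-function from Theorem~\ref{thm:embd:results}. Combined with the continuous Orlicz embedding $\h(\Omega)\hookrightarrow L^\varphi(\Omega)$ of Theorem~\ref{thm:embd:results}(i) and the $\Delta_2$-type property $\varphi(2t)\leq C\varphi(t)$, the first bound shows that $T$ is well defined on $\h(\Omega)$. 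The second bound, together with Young's inequality $ab\leq\varphi(a)+\varphi^*(b)$ for the conjugate pair $(\varphi,\varphi^*)$, shows that
\[
\langle T'(u),v\rangle:=\into(2u\ln|u|+u)v\,dx
\]
defines, for each $u\in\h(\Omega)$, a bounded linear functional on $\h(\Omega)$: the contribution from $\{|u|\leq 1\}$ is dominated by $C\|v\|_{L^2}$, while on $\{|u|>1\}$ the integrand is controlled by $|u|\ln(e+|u|)|v|$ and absorbed via the Orlicz--Young pairing.

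G\^ateaux differentiability of $T$ at $u$ in the direction $v$ then follows by writing the difference quotient as
\[
\frac{T(u+tv)-T(u)}{t}=\into v\int_0^1 h'(u+\tau t v)\,d\tau\,dx
\]
and applying Lebesgue's dominated convergence theorem: the integrand converges a.e.\ to $h'(u)v$, while the dominating function $C\bigl(1+(|u|+|v|)\ln(e+|u|+|v|)\bigr)|v|$ is integrable by the same Orlicz--Young argument. Summing the four derivatives and invoking the Riesz isomorphism of $\h(\Omega)$ with respect to its inner product $\mathcal{E}(\cdot,\cdot)$ produces the claimed formula for $(\mathbb{E}'(u),v)_{\h}$. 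For the continuity of $\mathbb{E}'$, given $u_n\to u$ in $\h(\Omega)$, one extracts a subsequence so that $u_n\to u$ a.e.\ in $\Omega$, in $L^2(\Omega)$, and in $L^\psi(\Omega)$ for admissible $\psi$ (Theorem~\ref{thm:embd:results}(ii)); continuity of the quadratic pieces is immediate, while a Vitali-type argument based on uniform $\varphi^*$-modular integrability of the sequence $u_n\ln|u_n|$ combined with the a.e.\ convergence delivers
\[
\sup_{\|v\|_{\h}\leq 1}\Bigl|\into\bigl(u_n\ln|u_n|-u\ln|u|\bigr)v\,dx\Bigr|\longrightarrow 0.
\]
This simultaneously upgrades the G\^ateaux derivative to a Fr\'echet derivative and yields $\mathbb{E}\in C^1(\h(\Omega))$ with $\mathbb{E}':\h(\Omega)\to\h(\Omega)$ continuous.

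The main obstacle is the complete absence of a Sobolev-type embedding $\h(\Omega)\hookrightarrow L^{2+\varepsilon}(\Omega)$ for any $\varepsilon>0$, which makes the usual polynomial-growth Nemytskii estimates unavailable and forces one to exploit the sharp Orlicz framework of Theorem~\ref{thm:embd:results} together with the Young/Orlicz--H\"older pairing associated with $\varphi(t)=t^2\ln(e+t)$. Once these ingredients are in place, the $C^1$-regularity reduces to a textbook G\^ateaux-plus-continuity scheme.
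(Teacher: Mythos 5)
Your proof is correct, and the overall skeleton (G\^ateaux derivative of the logarithmic term via a difference-quotient/dominated-convergence argument, then continuity of the derivative to upgrade to Fr\'echet/$C^1$) matches the paper's. The difference lies in how the integrability of the dominating functions is obtained. The paper applies the mean value theorem pointwise to $g(t)=(\tfrac{\omega}{2}-\tfrac{\sigma}{4})t^2+\tfrac{\sigma}{4}t^2\ln t^2$, splits at $|u+\delta\tau v|=1$, dominates the quotient by terms of the form $|u|^2\,|\ln|2u||+|v|^2\,|\ln|2v||+|v|$, and invokes the sharp logarithmic Sobolev inequality (Proposition \ref{Log-Sobolev-ineq}) together with \cite[Lemma 3.4]{Santamaria-Saldana-2022} to integrate them. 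You instead route everything through the continuous Orlicz embedding $\h(\Omega)\hookrightarrow L^\ph(\Omega)$ of Theorem \ref{thm:embd:results}(i) and the Young/Orlicz--H\"older pairing with the conjugate function $\ph^*$; since that embedding is itself proved from the log-Sobolev inequality, the two arguments rest on the same analytic input, but your duality formulation has the advantage of directly producing an estimate that is \emph{linear} in $v$ (via $|u|\ln(e+|u|)\,|v|\le \ph^*\!\left(|u|\ln(e+|u|)\right)+\ph(|v|)$ and the standard identity $\ph^*(\ph(t)/t)\le\ph(t)$), which is exactly what is needed to see that $v\mapsto\into u\ln|u|\,v\,dx$ is bounded and to run the Vitali argument for continuity of $\mathbb{E}'$ — a point the paper dispatches with ``similar arguments.'' Both proofs are sound; yours is slightly more systematic on the duality side, the paper's more self-contained in that it never introduces $\ph^*$.
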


We begin with proving that the Nehari manifold $N_{0,\sigma}$ is non-empty and is away from the origin in $\h(\Omega)$, {\it i.e.,} $N_{0,\sigma} \cap B_c(0) = \emptyset$ for some $c>0$, where $B_c(0) = \{u \in \h(\Omega): \|u\| >c\}.$ For this purpose, we define the fibering map $n_u: (0, \infty) \to \mathbb{R}$ such that 
\[
n_u(r) := \mathbb{E}(ru) \quad \text{for} \ u\in \h(\Omega) \setminus \{0\}.
\]
It is easy to verify that 
\[
N_{0, \sigma} = \{r u \in \h(\Omega) \setminus \{0\}: n'_u(r) =0 \}.
\]
\begin{lemma}
    \label{Nehari-struc} 
 Assume that $f$ satisfies \eqref{assump f1}, \eqref{assump f2} and \eqref{assump f3}. Then, for every $u\in \h(\Omega) \setminus \{0\}$,
   there exists a point $r_{0,u}$ such that $n_u'(r_{0,u}) =0$ and $n_u'(r) >0$ for $r \in (0, r_{0,u}).$ Moreover, $r_{0,u} u\in N_{0, \sigma}.$ 
\end{lemma}

\begin{proof}
Let $u \in \mathbb{H}(\Omega) \setminus \{0\}$ and $r>0.$ Note that
    \[
    \begin{split}
      n_u(r) &= \frac{r^2}{2}\mathcal{E}_L(u,u)-\into F(x,ru) ~dx+ \frac{\sigma r^2}{4} \into u^2 ~dx -\frac{\sigma r^2 \ln r}{2} \into u^2 ~dx\\
      & \qquad \qquad \qquad \qquad-\frac{\sigma r^2}{2} \into u^2 \ln|u| ~dx.
    \end{split}
    \]
Using \eqref{assump F1} with $\eps < \frac{\sigma}{2}$ in the definition of $n_u$, we obtain, for $r \in (0,1)$
{ \[
\begin{split}
    n_u(r)& \geq r^2\bigg(\frac{1}{2} \mathcal{E}_L(u,u)-\bigg[\l(\frac{\sigma}{2} \ln r + \eps |\ln r| \r)-\frac{\sigma}{4} + C\bigg]\|u\|^2_{L^2(\Omega)} \\
    & \qquad \qquad \qquad \qquad \qquad - \frac{\sigma}{2} \into u^2 \ln|u| ~dx -  \eps \into u^2 |\ln|u|| ~dx\bigg)\\
    & \geq r^2\bigg(\frac{1}{2} \mathcal{E}_L(u,u)-\bigg[\l(\frac{\sigma}{2} - \eps \r)\ln r-\frac{\sigma}{4} + C\bigg]\|u\|^2_{L^2(\Omega)} \\
    & \qquad \qquad \qquad \qquad \qquad - \frac{\sigma}{2} \into u^2 \ln|u| ~dx -  \eps \into u^2 |\ln|u|| ~dx\bigg)\\
\end{split}
\]
and for any $r \geq 0$
\[
\begin{split}
    n_u(r)& \leq r^2\bigg(\frac{1}{2} \mathcal{E}_L(u,u)-\bigg[\l(\frac{\sigma}{2} \ln r - \eps |\ln r|\r) -\frac{\sigma}{4}-C\bigg] \|u\|^2_{L^2(\Omega)} \\
    & \qquad \qquad \qquad \qquad - \frac{\sigma}{2} \into u^2 \ln|u| ~dx +  \eps \into u^2 |\ln|u|| ~dx\bigg).
\end{split}
\]}
This implies that there exists $r_{0,u}>0$ small enough such that
\[ \lim\limits_{r \to 0} n_u(r)=0, \quad \lim\limits_{r \to \infty} n_u(r)=-\infty \quad \text{and} \quad n_u(r)>0 \ \text{for} \ r \in (0, r_{0,u}).
\] Thus, there exists a (first) critical point $r_{0,u}$ of $n_u(\cdot)$ such that $n_u'(r) >0$ for $r \in (0, r_{0,u})$. Moreover, $r_{0,u} u \in N_{0,\sigma}.$
\end{proof}
\begin{lemma}
    \label{lower-bds-norms}
Assume $f$ satisfies \eqref{assump f1}, \eqref{assump f2} and \eqref{assump f3}.  Then, there exist constants $c_1>0$ and $c_2>0$ such that
\[
\|u\|_{L^2(\Omega)}\geq c_1 \quad \text{and} \quad \|u\|>c_2 \quad \text{for all} \ u \in N_{0, \sigma}.
\]
\end{lemma}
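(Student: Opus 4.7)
The plan is to exploit the Nehari identity together with the sharp logarithmic Sobolev inequality of Proposition \ref{Log-Sobolev-ineq} and the strict bound $\sigma < \frac{4}{N}$, and then invoke the spectral floor $\lambda_{1,L}$ to rule out concentration at the origin.

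First I would fix $u\in N_{0,\sigma}$ and, starting from the Nehari identity
\[
\mathcal{E}_L(u,u)=\into\omega(x)u^{2}\,dx+\sigma\into\ln|u|\,u^{2}\,dx,
\]
estimate the logarithmic term using Proposition \ref{Log-Sobolev-ineq} multiplied by $\sigma>0$. Combined with $\omega\in L^{\infty}(\Omega)$, this yields
\[
\mathcal{E}_L(u,u)\;\le\;\|\omega\|_{\infty}\|u\|^{2}_{L^{2}(\Omega)}+\frac{N\sigma}{4}\mathcal{E}_L(u,u)+\sigma\ln\|u\|_{L^{2}(\Omega)}\|u\|^{2}_{L^{2}(\Omega)}+\frac{N\sigma\,a_N}{4}\|u\|^{2}_{L^{2}(\Omega)}.
\]
Because $1-\frac{N\sigma}{4}>0$, one can absorb $\mathcal{E}_L(u,u)$ on the left. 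Dividing by $\|u\|^{2}_{L^{2}(\Omega)}>0$ (possible since $u\ne 0$) and using the variational characterization $\mathcal{E}_L(u,u)\ge\lambda_{1,L}\|u\|^{2}_{L^{2}(\Omega)}$, I obtain
\[
\Bigl(1-\tfrac{N\sigma}{4}\Bigr)\lambda_{1,L}\;\le\;\|\omega\|_{\infty}+\sigma\ln\|u\|_{L^{2}(\Omega)}+\tfrac{N\sigma\,a_N}{4}.
\]
Solving for $\ln\|u\|_{L^{2}(\Omega)}$ and exponentiating gives a lower bound
\[
\|u\|_{L^{2}(\Omega)}\;\ge\;c_{1}:=\exp\!\left(\frac{1}{\sigma}\Bigl[\bigl(1-\tfrac{N\sigma}{4}\bigr)\lambda_{1,L}-\|\omega\|_{\infty}-\tfrac{N\sigma a_N}{4}\Bigr]\right)>0,
\]
independent of $u$.

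The corresponding lower bound on $\|u\|$ then follows from the Poincar\'e-type estimate at the end of Proposition \ref{Log-Sobolev-ineq}, which compares the $L^{2}$ and energy norms; together with the bound just derived, it produces a constant $c_{2}>0$ (depending only on $\sigma$, $\omega$, $\lambda_{1,L}$, $a_N$, and $S_{ln}$) with $\|u\|\ge c_{2}$ for all $u\in N_{0,\sigma}$.

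I do not expect any serious obstacle in this argument: the delicate point is simply to track signs carefully and to note that the strict inequality $\sigma<\frac{4}{N}$ is exactly what permits the absorption of $\mathcal{E}_L(u,u)$ on the log-Sobolev side, mirroring the critical threshold already appearing in Theorem \ref{Main-res-limitingprob}. The fact that $\lambda_{1,L}$ may be negative is irrelevant, as the resulting constant $c_{1}$ is still a (possibly small) strictly positive number.
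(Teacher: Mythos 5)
Your argument is correct and is essentially the same as the paper's: both rest on the Nehari identity, the sharp logarithmic Sobolev inequality with the absorption of $\mathcal{E}_L(u,u)$ made possible by $\sigma<\tfrac{4}{N}$, the lower bound $\mathcal{E}_L(u,u)\ge\lambda_{1,L}\|u\|_{L^2(\Omega)}^2$, and the estimate $\|u\|_{L^2(\Omega)}^2\le S_{ln}\|u\|^2$ to pass to the energy norm; the paper merely phrases the first step as ``$(\mathbb{E}'(u),u)_{\h}>0$ whenever $\|u\|_{L^2(\Omega)}<c_1$, contradicting $u\in N_{0,\sigma}$,'' which yields the same constant $c_1$ you obtain.
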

\begin{proof}
{ Note that,
\begin{equation}\label{nonl-reminder-term-est}
    \begin{split}
    \into u^2 \ln |u| {\bf 1}_{|u| \leq 1} ~dx &= \ln \|u\|_{L^2(\Omega)} \into u^2 {\bf 1}_{|u| \leq 1} ~dx \\
    & \qquad \qquad + \into u^2 {\bf 1}_{|u| \leq 1} \ln \left(\frac{|u|}{\|u\|_{L^2(\Omega)}}\right) ~dx\\
    & \geq \|u {\bf 1}_{|u| \leq 1}\|_{L^2(\Omega)}^2 \ln \|u\|_{L^2(\Omega)} - \frac{|\Omega|}{2e} \|u\|_{L^2(\Omega)}^2
\end{split}
\end{equation}
Let $u \in N_{0, \sigma}$ such that $\|u\|_{L^2(\Omega)} < c_1:=\min\{c_\ast,1\}$ where
\[c_\ast:= \exp{\l(\frac{1}{(\sigma-\eps)}\l[\l(1-\frac{(\sigma+ \eps) N}{4}\r)\lambda_L^1-C-\frac{ \eps|\Omega|}{e}-\frac{(\sigma +\eps) N a_N}{4}\r]\r)}\]
and $C, \eps$ are defined in \eqref{assump F1}.} Now, by using \eqref{assump F1} with $\eps < \frac{4}{N}-\sigma$, \eqref{nonl-reminder-term-est} and Proposition \ref{Log-Sobolev-ineq}, we have
    \[
    \begin{split}
       & (\mathbb{E'}(u),u)_{\h} \geq \mathcal{E}_L(u,u)- C \|u\|^2_{L^2(\Omega)} - { (\sigma +\eps)\into u^2\ln|u| {\bf 1}_{|u| \geq 1} ~dx} \\
       & \quad \quad \quad \quad \quad \quad \quad \quad{ - (\sigma-\eps) \into u^2 \ln |u| {\bf 1}_{|u| \leq 1} ~dx}\\
       & \quad \geq  \mathcal{E}_L(u,u)- C \|u\|^2_{L^2(\Omega)} -(\sigma + \eps)\into u^2\ln|u| ~dx + { 2 \eps \into u^2\ln|u| {\bf 1}_{|u| \leq 1} ~dx}\\
       & \quad \geq \mathcal{E}_L(u,u)- \frac{(\sigma + \eps) N}{4} \l[\mathcal{E}_L(u,u)+\frac{2}{N}(\ln \|u\|^2_{L^2(\Omega)}) \|u\|^2_{L^2(\Omega)}+a_N \|u\|^2_{L^2(\Omega)}\r] \\
        & \qquad \quad - C \|u\|^2_{L^2(\Omega)} { + 2 \eps \|u {\bf 1}_{|u| \leq 1} \|_{L^2(\Omega)}^2 \ln \|u\|_{L^2(\Omega)} - \frac{ \eps|\Omega|}{e} \|u\|_{L^2(\Omega)}^2 }\\
        &\quad = { \l[\l(1- \frac{(\sigma + \eps) N}{4}\r)\lambda_L^1\r] \|u\|^2_{L^2(\Omega)}} \\
        & \qquad \qquad - { \l(C+ \frac{ \eps|\Omega|}{e} + \frac{(\sigma-\eps)}{2}\ln (\|u\|_{L^2(\Omega)}^2)+\frac{(\sigma + \eps) N a_N}{4}\r) \|u\|^2_{L^2(\Omega)}} >0,
    \end{split}
    \]
where $\lambda_L^1$ represents the first Dirichlet eigenvalue of $L_\Delta$ in $\Omega.$  But if $u \in N_{0,\sigma}$ then $ (\mathbb{E'}(u),u)_{\h} =0$. Hence, $\|u\|_{L^2(\Omega)} \geq c_1.$ Since $\h(\Omega)\hookrightarrow L^2(\Omega)$ is compact, we have
$$c_1^2 \leq \|u\|_{L^2(\Omega)}^2 \leq S_{ln} \|u\|^2 \quad  \text{for all} \  u\in \h(\Omega).$$ Therefore, $\|u\| > c_2:=c_1S_{ln}^\frac{-1}{2}$. Hence the claim follows.
\end{proof}
\begin{proposition}
    \label{weak-conve-est}
    Let $f$ satisfies \eqref{assump f1}, \eqref{assump f2}, \eqref{assump f3}, \eqref{assump f-sigma} and $(u_n)_{n\in \mathbb N} \subset N_{0,\sigma}$ be a sequence such that $\sup_{n\in \mathbb N} \mathbb{E}(u_n)\leq C'$ for some constant $C'>0$. Then, $(u_n)_{n\in \mathbb N}$ is bounded in $\h(\Omega)$. Moreover, passing to a subsequence, there exists a $ u_0 \in \h(\Omega)\setminus\{0\}$ such that $u_n \rightharpoonup u_0$ weakly in $\h(\Omega)$ and $u_n \to u_0$ strongly in $L^2(\Omega),$ as $n \to \infty$. Moreover, $\inf_{N_{0,\sigma}} \mathbb{E}$ exists.
    \end{proposition}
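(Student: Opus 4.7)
The starting point is a clean identity on the Nehari manifold: for any $u \in N_{0,\sigma}$, the constraint $\mathcal{E}_L(u,u) = \into (\omega(x) + \sigma \ln|u|)u^2\, dx$ can be substituted into the definition \eqref{def:enerfunct:plambda} of $\mathbb{E}$ to cancel the quadratic and $\omega$-terms, leaving
\[
\mathbb{E}(u) \;=\; \frac{\sigma}{4}\|u\|_{L^2(\Omega)}^2.
\]
This is already asserted in Theorem \ref{Main-res-limitingprob} and is a straightforward algebraic manipulation. The immediate consequence of $\sup_n \mathbb{E}(u_n) \leq C$ is that $(u_n)$ is bounded in $L^2(\Omega)$, with $\|u_n\|_{L^2(\Omega)}^2 \leq \frac{4C}{\sigma}$.

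Next I would upgrade this to boundedness in $\h(\Omega)$. Combining the Nehari identity with the sharp logarithmic Sobolev inequality (Proposition \ref{Log-Sobolev-ineq}) applied to $u_n$ gives
\[
\mathcal{E}_L(u_n,u_n) \;\leq\; \|\omega\|_{L^\infty(\Omega)}\|u_n\|_{L^2(\Omega)}^2 + \sigma\l[\frac{N}{4}\mathcal{E}_L(u_n,u_n) + \ln(\|u_n\|_{L^2(\Omega)})\|u_n\|_{L^2(\Omega)}^2 + \frac{Na_N}{4}\|u_n\|_{L^2(\Omega)}^2\r].
\]
Here is the crucial place where the restriction $\sigma \in (0,\frac{4}{N})$ enters: the coefficient $1 - \frac{\sigma N}{4}$ is strictly positive, so after rearrangement
\[
\l(1-\frac{\sigma N}{4}\r)\mathcal{E}_L(u_n,u_n) \;\leq\; \l(\|\omega\|_{L^\infty(\Omega)} + \sigma \ln(\|u_n\|_{L^2(\Omega)}) + \frac{\sigma Na_N}{4}\r)\|u_n\|_{L^2(\Omega)}^2.
\]
Using the two-sided bound $c_1 \leq \|u_n\|_{L^2(\Omega)} \leq (4C/\sigma)^{1/2}$ from Lemma \ref{lower-bds-norms} together with the $L^2$-bound just obtained, the right-hand side is uniformly bounded in $n$, so $\mathcal{E}_L(u_n,u_n)$ is bounded above.

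To pass from a bound on $\mathcal{E}_L(u_n,u_n)$ to a bound on $\|u_n\|^2 = \mathcal{E}(u_n,u_n)$, I would use the decomposition \eqref{quadratic-form},
\[
\mathcal{E}(u_n,u_n) \;=\; \mathcal{E}_L(u_n,u_n) + c_N \intr u_n(x)u_n(y)\, j(x-y)\, dx\, dy - \rho_N\|u_n\|_{L^2(\Omega)}^2,
\]
and control the middle term via the bounded bilinear form estimate (\cite[Lemma 3.4]{Santamaria-Saldana-2022} already invoked in Lemma \ref{lem:C^1regu}), which yields an upper bound of the form $C'\|u_n\|_{L^2(\Omega)}^2$. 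All three terms on the right are now uniformly bounded, so $(u_n)$ is bounded in $\h(\Omega)$.

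By reflexivity of $\h(\Omega)$ and the compact embedding $\h(\Omega) \hookrightarrow L^2(\Omega)$ from \cite{Correa-DePablo-2018}, a subsequence satisfies $u_n \rightharpoonup u_0$ in $\h(\Omega)$ and $u_n \to u_0$ in $L^2(\Omega)$. Strong $L^2$-convergence together with Lemma \ref{lower-bds-norms} yields $\|u_0\|_{L^2(\Omega)} = \lim_n \|u_n\|_{L^2(\Omega)} \geq c_1 > 0$, so $u_0 \not\equiv 0$. The main obstacle is exactly the step that exploits $\sigma < 4/N$: without it, the logarithmic Sobolev inequality cannot absorb $\mathcal{E}_L(u_n,u_n)$ back to the left-hand side, and the boundedness in $\h(\Omega)$ would fail. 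Everything else is routine once the $L^2$-identity $\mathbb{E}(u)=\frac{\sigma}{4}\|u\|_{L^2(\Omega)}^2$ on $N_{0,\sigma}$ is in hand.
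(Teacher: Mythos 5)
Your proposal is correct and follows essentially the same path as the paper's proof: the Nehari identity gives the $L^2$-bound, the logarithmic Sobolev inequality combined with $1-\tfrac{\sigma N}{4}>0$ absorbs $\mathcal{E}_L(u_n,u_n)$, the bounded bilinear form estimate upgrades this to a bound on $\mathcal{E}(u_n,u_n)$, and reflexivity plus the compact embedding $\h(\Omega)\hookrightarrow L^2(\Omega)$ together with Lemma \ref{lower-bds-norms} yield the non-trivial limit. The only cosmetic difference is that you apply the logarithmic Sobolev inequality directly to the Nehari constraint rather than to the rewritten expression for $\mathbb{E}(u_n)$; on $N_{0,\sigma}$ these are the same computation.
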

    \begin{proof}
        Since $u_n \in N_{0,\sigma}$ and \eqref{assump f-sigma} implies  \eqref{modi-AR-cond} as in Remark \ref{remark-AR-cond}, we get
        \begin{equation}\label{lower:est-Nehari}
            \begin{split}
        \mathbb{E}(u_n)&= \frac{1}{2}\into f(x,u_n)u_n-\into F(x,u_n) ~dx+\frac{\sigma}{4}\into u_n^2 ~dx\\ & \geq \frac{(\sigma-\delta)}{4}\|u_n\|_{L^2(\Omega)}^2 >0.
        \end{split}
        \end{equation}
Therefore, by using $\mathbb{E}(u_n) \leq C'$ and the above estimate, we obtain       
 \begin{equation}\label{bound-on-L^2-norm}
            \sup_{n \in \mathbb N} \|u_n\|^2_{L^2(\Omega)} \leq  C_1:=\frac{4C'}{(\sigma-\delta)}.
            \end{equation}
By \eqref{assump F1}, \eqref{nonl-reminder-term-est}, Proposition \ref{Log-Sobolev-ineq}, and taking $\eps < \frac{1}{2}\l(\frac{4}{N}-\sigma\r)$, we have
        \[
    \begin{split}
         \mathbb{E}(u_n) &= \frac{1}{2}\mathcal{E}_L(u_n,u_n)-\frac{\sigma}{4}\into u_n^2(\ln |u_n|^2-1) ~dx- \into F(x,u_n) ~dx \\
         & \geq \frac{1}{2}\mathcal{E}_L(u_n,u_n) - { \l(C-\frac{\sigma}{4}\r) \|u_n\|^2_{L^2(\Omega)} - \l(\frac{\sigma}{2} +\eps\r)\into u_n^2 \ln |u_n| ~dx }\\
         & \qquad \qquad {+ 2 \eps \into |u_n|^2 \ln |u_n| {\bf 1}_{|u_n| \leq 1}~dx}\\
         & \geq \frac{1}{2}\mathcal{E}_L(u_n,u_n) - { \l(C-\frac{\sigma}{4}\r) \|u_n\|^2_{L^2(\Omega)} - \l(\frac{\sigma}{2} +\eps\r)\into u_n^2 \ln |u_n| ~dx }\\
         & \qquad \qquad { + 2 \eps \|u_n {\bf 1}_{|u_n| \leq 1}\|_{L^2(\Omega)}^2 \ln \|u_n\|_{L^2(\Omega)} - \frac{ \eps|\Omega|}{e} \|u\|_{L^2(\Omega)}^2}\\
         &\geq \frac{\mathcal{E}_L(u_n,u_n)}{2} \l(1- \frac{N (\sigma + 2\eps)}{4}\r) \\
         & \qquad \qquad -{ \l(\frac{(\sigma+2\eps)}{4} \|u_n\|^2_{L^2(\Omega)} + \eps \|u_n {\bf 1}_{|u_n| \leq 1}\|_{L^2(\Omega)}^2\r)\ln \|u_n\|_{L^2(\Omega)}^2} \\
         & \qquad \qquad  -{ \l(C+ \frac{\eps |\Omega|}{e}-\frac{\sigma}{4}+\frac{Na_N  (\sigma + 2\eps)}{8}\r)} \|u_n\|^2_{L^2(\Omega)}.
     \end{split}
     \]
Finally, by using $\sigma \in (0, \frac{4}{N})$, \eqref{bound-on-L^2-norm} and \cite[Lemma 3.4]{Santamaria-Saldana-2022} in the definition of $\mathcal{E}_L(\cdot,\cdot)$, we get
     \[\mathcal{E}(u_n,u_n)= \|u_n\|^2 < C_2,\]
     for a constant $C_2>0$
     which in turn implies that  $(u_n)_{n\in \mathbb N}$ is bounded in $\h(\Omega)$. 
     Thus, we can get a $u_0 \in \h(\Omega)$ such that $u_n \rightharpoonup u_0$ in $\h(\Omega)$. 
    Now, by the compact inclusion of $\h(\Omega)$ in $L^2(\Omega)$, $u_n \to u_0$ in $L^2(\Omega)$. The non-triviality of $u_0$ follows from Lemma \ref{lower-bds-norms}. Moreover, by \eqref{lower:est-Nehari}, $\inf_{N_{0,\sigma}} \mathbb{E}$ exists.
     \end{proof}
\begin{lemma}
\label{limits_interchange}
Let $f$ satisfies \eqref{assump f1}-\eqref{assump f4} and $(u_n)_{n\in \N}\subseteq \h(\Omega)$ such that $u_n \rightharpoonup u_0$ for some $u_0 \in \h(\Omega)$. Then, the following assertions hold true:
    \begin{enumerate}
        \item [\textnormal{(i)}] If $(\zeta_n)_{n \in \N} \subseteq C_c^\infty(\Omega),$ such that $\zeta_n \to \zeta_0$ in $\h(\Omega)$ as $n \to \infty,$ then 
        \[\lim\limits_{n \to \infty} \into f(x,\zeta_0)\zeta_n ~dx = \into f(x,\zeta_0) \zeta_0 ~dx. \]
        
        \item  [\textnormal{(ii)}] $$\lim\limits_{n \to \infty} \into f(x,u_n)\ph ~dx = \into f(x,u_0)\ph ~dx \quad \text{for all} \ \ph \in C_c^\infty(\Omega).$$
        
         \item [\textnormal{(iii)}] $$\lim\limits_{n \to \infty} \into f(x,u_n)u_n ~dx = \into f(x,u_0)u_0 ~dx$$ and $$\lim\limits_{n \to \infty} \into F(x,u_n) ~dx = \into F(x,u_0) ~dx.$$

    \end{enumerate}
\end{lemma}

\begin{proof}
By H\"older inequality in \cite[Lemma 3.2.11]{Harjulehto-Hasto-2019}, we have
\[
\into |f(x,\zeta_0)||\zeta_n-\zeta_0| ~dx \leq 2\|f(x,\zeta_0)\|_{L^{\ph^\ast}(\Omega)}\|\zeta_n-\zeta_0\|_{L^\ph(\Omega)}
\]
where $\ph^\ast$ denotes the H\"older conjugate of $\ph$.
Since $\zeta_n \to \zeta_0$ in $\h(\Omega),$ by Theorem \ref{thm:embd:results}, $\zeta_n \to \zeta_0$ in $L^\ph(\Omega).$ To prove our claim, it thus remains to show that $f(\cdot,\zeta_0) \in L^{\ph^\ast}(\Omega).$ For this, it is enough to show that $\into \ph^\ast(|f(x,\zeta_0)|) ~dx <\infty.$
{ Using \eqref{assump f1}-\eqref{assump f3}, we get}
\begin{equation}\label{upper:est-bound}
    \begin{split}
    |f(x,\zeta_0)| \leq C |\zeta_0|\ln(e+|\zeta_0|),
\end{split}
\end{equation}
where $C_\eps>0$. Since $\ph$ is increasing and fulfills (Dec)$_{2+\eps}$, by \cite[Proposition 2.4.9]{Harjulehto-Hasto-2019}, $\ph^\ast$ is an increasing function. Now, by using \cite[Theorem 2.4.8 and Corollary 2.4.11]{Harjulehto-Hasto-2019} and \eqref{upper:est-bound}, we obtain
\[
\begin{split}
    \ph^\ast(|f(x,\zeta_0)|) &\leq \ph^\ast(C_\eps |\zeta_0|\ln(e+|\zeta_0|)) \\ &\leq \tilde{C} \ph^\ast(|\zeta_0|\ln(e+|\zeta_0|)) = \tilde{C} \ph^\ast\l(\frac{\ph(|\zeta_0|)}{|\zeta_0|}\r)\leq \tilde{C} \ph(|\zeta_0|),
\end{split}
\]
for some constant $\tilde{C} >0.$ Thus,
\[ \into \ph^\ast(|f(x,\zeta_0)|) ~dx \leq  \tilde{C} \into \ph(|\zeta_0|) ~dx <\infty.\]
Hence, \[\lim\limits_{n \to \infty} \into f(x,\zeta_0)\zeta_n ~dx = \into f(x,\zeta_0)\zeta_0 ~dx.\] This completes the proof of (i). Next, we prove (ii). Let $u_n \rightharpoonup u_0$ in $\h(\Omega).$ Then, by \cite[Theorem 2.1]{Correa-DePablo-2018}, $u_n \to u_0$ in $L^2(\Omega)$ and a.e. in $\Omega$, and for $\eta >0.$ Since $u_n \to u_0$ in $L^2(\Omega)$, there exists a $\delta>0$ such that 
\begin{equation}\label{equi-inte-2}
\int_{A} |u_n|^2 ~dx \leq \frac{\eta}{3C_0} \quad \text{for all} \ A \subset \Omega, \ |A| < \delta
\end{equation}
where $C_0$ is defined in \eqref{assumption2}.
Now, using \eqref{assumption2} and the fact that $\ln |t| < |t|$ for $|t| \geq 1$, we get
    \[
    \begin{split}
       & |f(x,u_n)\ph| \leq \l( C_0 |u_n| { + \eps |u_n| |\ln|u_n|| \chi_{\{|u_n| < 1\}}} + \eps |u_n|\ln|u_n| \chi_{\{|u_n| \geq 1\}} \r)\|\ph\|_{L^\infty(\Omega)}\\
& \qquad \leq { C \l(|u_n|+ \eps + |u_n|^{2}\r)\|\ph\|_{L^\infty(\Omega)} }\in L^1(A).
    \end{split}
    \]
    This implies $(f(x,u_n) \phi)_{n \in \mathbb{N}}$ is uniformly equi-integrable. Moreover, since $f$ satisfies \eqref{assump f4} and $u_n \to u_0$ a.e. in $\Omega$, $f(x, u_n) \to f(x,u_0)$ a.e. in $\Omega$. Then, by Vitali's theorem, (ii) follows. Note that $u_n$ is bounded in $\h(\Omega)$ and $L^2(\Omega)$, therefore there exists a $C_2>0$ such that
\[
 \mathcal{E}_L(u_n,u_n)+\frac{4}{N}\ln (\|u_n\|_2)\|u_n\|^2_2 + a_N \|u_n\|^2_2 \leq C_2.
\]
Now, by using \eqref{assumption2} with $\eps = \min\left\{ \frac{4\eta}{3N C_2}, \frac{2e \eta}{3|\Omega|}\right\}$, \eqref{equi-inte-2} and Proposition \ref{Log-Sobolev-ineq}, we obtain
\begin{equation}\label{unifintegrability}
    \begin{split}
  \int_A f(x,u_n) & u_n ~dx \leq C_0 \int_A |u_n|^2 ~dx + \eps \int_A { |u_n|^2 |\ln|u_n||}~dx\\
  & \leq \frac{\eta}{3} { + \eps \int_{\Omega} |u_n|^2 \l(|\ln |u_n|| -\ln(|u_n|)\r) \chi_{\{|u_n| <1\}} ~dx} + \eps \into u_n^2 \ln |u_n| ~dx\\
  & \leq \frac{\eta}{3} + { \frac{\eps |\Omega|}{2e}}  + \frac{N \eps}{4} \l[ \mathcal{E}_L(u_n,u_n)+\frac{4}{N}\ln (\|u_n\|_2)\|u_n\|^2_2 + a_N \|u_n\|^2_2\r]\\
  & < \frac{\eta}{3} + \frac{\eta}{3} + \frac{\eta}{3} = \eta.
\end{split}
\end{equation}
Thus, $(f(\cdot, u_n) u_n)_{n \in \N}$ is uniformly equi-integrable in $L^1(\Omega)$. Using $u_n \to u_0$ a.e. in $\Omega$ and Vitali convergence theorem, we get
\[\lim\limits_{n \to \infty} \into f(x,u_n)u_n ~dx = \into f(x,u_0)u_0 ~dx.\]
Again, using \eqref{assump F1}, arguments in \eqref{unifintegrability} for $F$ and Vitali convergence theorem, we obtain
\[
\lim\limits_{n \to \infty} \into F(x,u_n) ~dx = \into F(x,u_0) ~dx.
\] This completes the proof of (iii). 
\end{proof}

\noindent \textit{Proof of Theorem \ref{Main-res-limitingprob}:} 
Let $\psi: \h(\Omega)\setminus\{0\} \to \R$ be given by
\[\psi(u):= \mathcal{E}_L(u,u)-\frac{\sigma}{2}\into u^2 \ln u^2 ~dx - \into f(x,u)u ~dx.\]
By a similar set of arguments as in Lemma \ref{lem:C^1regu} and \eqref{assump f4}, $\psi$ is a $C^1$ functional on $\h(\Omega)\setminus\{0\}.$
It is known that if $f: X \to Y$ is a $C^1$ map between manifolds $X$ and $Y$. The value $y \in Y$ is called a regular value if $\frac{df}{dx}: T_x(X) \to T_y(Y)$ is surjective at every point $x$ such that $f(x) = y.$ Here, $T_x(X), T_y(Y)$ are tangent spaces at $x,y$ respectively. In our case, for $v \in \h(\Omega)$
\[
(\psi'(u), v)_{\mathbb{H}} = 2 \mathcal{E}_L(u,v)-\sigma\into u v (\ln u^2+1) ~dx - \into f'(x,u)u v ~dx -\into f(x,u)v ~dx.
\]
Moreover, using \eqref{assump f-sigma}, we have
\begin{equation}\label{new-funct-lower-bounds}
    (\psi'(u), u)_{\mathbb{H}} = \l(\into f(x,u)u ~dx-\into f'(x,u)u^2 ~dx-\sigma \into u^2 ~dx\r)< 0\quad \text{if} \ u\in N_{0,\sigma}
\end{equation}
which in turn implies that $0$ is a regular value of $\psi$. Therefore, $N_{0,\sigma} = \psi^{-1} (0)$ is a $C^1$ manifold. It is easy to see that any minimizer $u$ of $\mathbb{E}$ restricted to $N_{0,\sigma}$ satisfies $\psi(u)=0.$ In view of this property, we can apply the critical point theory on $N_{0,\sigma}$ to get critical points of $\mathbb{E}.$ By Ekeland's variational principle \cite[Corollary 3.4]{Ekeland}, there is a minimizing sequence $\{u_n\}_{n\in \mathbb{N}} \subset N_{0,\sigma}$ and $\{\zeta_n\}_{n\in \mathbb{N}}\subset \R$ such that
\begin{equation}
    \label{variationalprinciple-1}
    0\leq \mathbb{E}(u_n)-\inf_{N_{0,\sigma}} \mathbb{E} \leq \frac{1}{n^2} \quad \text{and} \quad \|\mathbb{E}'(u_n)-\zeta_n \psi'(u_n)\|_{\mathcal{L}(\h(\Omega),\R)}\leq \frac{1}{n}.
\end{equation}
Moreover, $\mathbb{E}(u_n) < +\infty$ for all $n \in \mathbb{N}$ and as $n \to \infty$
 \[
 \begin{split}
 o(1)&= \frac{1}{\|u_n\|}\l((\mathbb{E}'(u_n),u_n)_{\h}-\zeta_n (\psi'(u_n),u_n)_{\h} \r)\\ 
&=\frac{1}{\|u_n\|}\l[-\zeta_n\l(\into f(x,u_n)u_n ~dx-\into f'(x,u_n)u_n^2 ~dx-\sigma \into u_n^2 ~dx\r)\r].
\end{split}
\]
By \eqref{new-funct-lower-bounds}, Lemma \ref{lower-bds-norms} and Proposition \ref{weak-conve-est}, we get that $\zeta_n \to 0$ as $n \to \infty$ and $C_2' \leq \|u_n\|^2<C_2$ , for all $n\in \mathbb N$ and for some constants $C_2, C_2'>0.$ Thus, there is a $u_0 \in \h(\Omega)\setminus \{0\}$ such that as $n\to \infty$, $u_n \rightharpoonup u_0 $ in $\h(\Omega)$ and $u_n \to u_0 $ in $L^2(\Omega).$ Again, by boundedness of the sequence $\{u_n\}$, \eqref{assump f4} and Proposition \ref{Log-Sobolev-ineq}, we have 
\[
\begin{split}
\abs{(\psi'(u_n), v)_{\mathbb{H}}} < C \quad \text{for} \ v \in \h(\Omega), \|v\| =1.
\end{split}
\]
Now, by \eqref{variationalprinciple-1}, we have
 \[\|\mathbb{E}'(u_n)\|_{\mathcal{L}(\h(\Omega),\R)}\leq \frac{1}{n}+\|\zeta_n \psi'(u_n)\|_{\mathcal{L}(\h(\Omega),\R)} \to 0 \ \text{as} \ n\to \infty.\]
By Lemma \ref{limits_interchange} (i) and \cite[Lemma 5.4]{Santamaria-Saldana-2022}, for $\ph \in C^\infty_c(\Omega)$, we obtain
 \begin{equation}\label{eq:weak-solu}
 \begin{split}
 0 &= \lim_{n \to \infty} (\mathbb{E}'(u_n), \ph)_{\mathbb{H}}\\
 &= \lim_{n\to\infty} \l(\mathcal{E}_L(u_n,\ph)-\frac{\sigma}{2}\into \ln(u_n^2)u_n\ph ~dx- \into f(x,u_n)\ph ~dx \r) \\
 &= \mathcal{E}_L(u_0,\ph)-\frac{\sigma}{2}\into \ln(u_0^2)u_0\ph ~dx -\into f(x,u_0)\ph ~dx .
 \end{split}
  \end{equation}
The density of $C_c^\infty(\Omega)$ in $\mathbb{H}(\Omega)$ implies that $u_0$ is a weak solution of \eqref{prob:logarithmic-laplace}. Assume, $(\ph_n)_{n\in \mathbb N} \subset C^\infty_c(\Omega)$ such that $\ph_n \to u_0$ in $\h(\Omega)$ as $n\to \infty$. Using Lemma \ref{limits_interchange} (i) and \eqref{eq:weak-solu}, we obtain
 \[
 \begin{split}
 0 &= \lim_{n\to \infty} \l(\mathcal{E}_L(u_0,\ph_n) - \frac{\sigma}{2}\into u_0 \ph_n \ln u_0^2  ~dx- \into f(x,u_0)\ph_n ~dx \r)\\
 &=\mathcal{E}_L(u_0,u_0) - \frac{\sigma}{2}\into u_0^2 \ln u_0^2 ~dx- \into f(x,u_0)u_0 ~dx.
 \end{split}
 \]
 Therefore, $u_0 \in N_{0,\sigma}$. Using $u_n , u_0 \in N_{0,\sigma}$, $u_n \to u_0$ in $L^2(\Omega)$, and Lemma \ref{limits_interchange} (ii), we get
 \begin{equation}\label{eq:ground-state-proof}
     \begin{split}
 \inf_{N_{0,\sigma}} \mathbb{E} &= \lim_{n \to \infty} \mathbb{E}(u_n)\\
 &= \lim_{n\to\infty} \l(\frac{1}{2}\into f(x,u_n)u_n ~dx- \into F(x,u_n) ~dx+\frac{\sigma}{4}\into u_n^2 ~dx\r) \\
 &=\l(\frac{1}{2}\into f(x,u_0)u_0 ~dx-\into F(x,u_0) ~dx+\frac{\sigma}{4}\into u_0^2 ~dx\r) = \mathbb{E}(u_0).
 \end{split}
 \end{equation}
Let $u_0$ be a least energy solution of \eqref{prob:logarithmic-laplace}. By \cite[Lemma 3.3]{Chen-Weth-2019}, we have $\abs{u_0} \in \h(\Omega)$,
 \begin{equation}
 \label{sign-change}
 \mathcal{E}_L(\abs{u_0},\abs{u_0})\leq \mathcal{E}_L(u_0,u_0),
 \end{equation}
and equality holds if and only if $u_0$ does not change sign. Moreover, if $F(x,t)\leq F(x,|t|)$ for all $t \in \mathbb{R}$, then we have
\[
\begin{split}
    \mathbb{E}(|u_0|)&=\frac{1}{2}\mathcal{E}_L(|u_0|,|u_0|)- \into F(x,|u_0|) ~dx-\frac{\sigma}{4}\into |u_0|^2(\ln(|u_0|^2)-1) ~dx\\
    &\leq\frac{1}{2}\mathcal{E}_L(u_0,u_0)- \into F(x,u_0) ~dx-\frac{\sigma}{4}\into u_0^2(\ln(u_0^2)-1) ~dx\\
    &=\mathbb{E}(u_0)=\inf_{N_{0,\sigma}} \mathbb{E}.
\end{split}
\]
Thus, $\mathbb{E}(|u_0|)=\mathbb{E}(u_0)$. Hence, \eqref{sign-change} holds with equality and we can conclude that $u_0$ has a constant sign in $\Omega.$ 
\qed
\begin{remark}
In the above result, if $f(x,t)= \omega(x) t$ for $\omega \in L^\infty(\Omega)$, it is interesting to note that even without any sign assumption on the weight $\omega$, all least energy solutions do not change the sign in the domain $\Omega.$ 
We also point out that even in the case of the Laplacian, the non-linearity of the type $\sigma |u|^{p-2}u$ and $\sigma u \ln u$ behave significantly differently, for a detailed description we refer the reader to \cite{Shuai-2023} and the references therein.
\end{remark}
\subsection{Uniform asymptotics estimates}
Here, we derive uniform estimates on the sequence of solutions $u_s$ for the weighted fractional Dirichlet problem \eqref{Eq:Problem} under the following assumption
\begin{align}
    \label{assumption:1}
    \textbf{Superlinear growth:} \quad 2 < p(s) < 2_s^*, \quad 
    p'(0) \in \l(0, \frac{4}{N}\r).
\end{align}
First, we show the asymptotic expansion of the weighted non-linear integral term in the energy functional $E_s$ which plays an essential role in the fibering map analysis of the problem \eqref{Eq:Problem}.
\begin{lemma}\label{lem:tayexp:weighterm}
  Let $s \in (0,1)$, $\beta> 0$, $p: (0,1) \to \R^+$ be a $C^1$ function such that $\lim_{s \to 0^+} p(s) = \beta$ and $a$ satisfies \eqref{a:regulrity-asym}. Then, for every $\phi \in H_0^s(\Omega)$ the following expansion (about $s=0$) holds:
   \[
   \into a(s,x)|\ph|^{p(s)} ~dx = \|\ph\|^\beta_{L^\beta(\Omega)}  + s \into \left(a'(0,x)+p'(0)\ln  
    \abs{\ph}\right)\abs{\ph}^\beta ~dx +o(s).
   \]
\end{lemma}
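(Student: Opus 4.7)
The plan is to view the claim as a first-order Taylor expansion at $s=0$ of
\[
F(s) \;:=\; \into a(s,x)|\ph(x)|^{p(s)}\,dx,
\]
and to justify it by differentiation under the integral sign. Since $a(0,x)=1$ a.e.\ by \eqref{a:regulrity-asym} and $p(0)=\beta$, one has $F(0)=\|\ph\|_{L^\beta(\Omega)}^\beta$, and the proposed coefficient of $s$ is precisely $F'(0)=\into(a'(0,x)+p'(0)\ln|\ph|)|\ph|^\beta\,dx$. The argument then splits into a pointwise differentiation step and a dominated-convergence step.

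\emph{Step 1 (pointwise differentiation).} On $\{|\ph|>0\}$ write the integrand as $g(s,x)=a(s,x)\exp(p(s)\ln|\ph(x)|)$. By \eqref{a:regulrity-asym} and \eqref{regu:p}, both $a(\cdot,x)$ and $p$ are $C^1$ near $0$, so $s\mapsto g(s,x)$ is differentiable at $0$ with $\partial_s g(0,x)=(a'(0,x)+p'(0)\ln|\ph(x)|)|\ph(x)|^\beta$; on $\{\ph=0\}$ the integrand vanishes identically for small $s$ since $\beta>0$. Hence the difference quotient $[g(s,x)-g(0,x)]/s$ converges a.e.\ to $\partial_s g(0,x)$.

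\emph{Step 2 (dominated convergence).} By the mean value theorem the quotient equals $\partial_s g(\tau_s,x)$ for some $\tau_s\in(0,s)$. I would fix $s_0>0$ small enough that $p_0:=\sup_{[0,s_0]}p(s)$ satisfies $p_0+\eps\le 2_{s_0}^{\ast}$ for some $\eps>0$, split $\Omega$ into $\{|\ph|\le 1\}\cup\{|\ph|>1\}$, and combine the elementary bounds $\sup_{t\in(0,1]}t^\beta|\ln t|<\infty$ and $t^{p(\tau)}|\ln t|\le\eps^{-1}t^{p_0+\eps}$ for $t>1$ with the local uniform continuity of $\partial_s a(\cdot,x)$ and $p'$ from \eqref{a:regulrity-asym} and \eqref{regu:p}. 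This produces a majorant of the form $C(x)(1+|\ph(x)|^{p_0+\eps})$, integrable by the fractional Sobolev embedding $H^s_0(\Omega)\hookrightarrow L^{2_s^\ast}(\Omega)$ on the bounded domain $\Omega$; Lebesgue's dominated convergence theorem then yields the claimed expansion.

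\emph{Main obstacle.} The delicate point is the uniform $L^1$-domination: the factor $\ln|\ph|$ is singular both at $\{\ph=0\}$ and at $\{|\ph|=\infty\}$, and has to be absorbed by the variable-exponent term $|\ph|^{p(\tau)}$ while preserving a margin below the critical Sobolev exponent $2_{s_0}^{\ast}$. This forces the two-region splitting and a compatible choice of $\eps$; the pointwise Taylor step is essentially routine once this majorant has been secured.
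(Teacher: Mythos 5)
Your core idea coincides with the paper's: expand the integrand $a(s,x)|\ph|^{p(s)}$ to first order in $s$ pointwise and then integrate. The difference is in how the passage to general $\ph$ is handled. The paper carries out the pointwise Taylor expansion only for $\ph\in C^\infty_c(\Omega)$ (where $|\ph|^{\beta}$ and $|\ph|^{\beta}\ln|\ph|$ are bounded, so no splitting of $\Omega$ is needed) and then disposes of general $\ph\in H^s_0(\Omega)$ with a one-line density argument. You instead work directly with $\ph\in H^s_0(\Omega)$, controlling the difference quotients by the mean value theorem, the splitting $\{|\ph|\le1\}\cup\{|\ph|>1\}$, the bound $t^{p(\tau)}|\ln t|\le \eps^{-1}t^{p_0+\eps}$ and the Sobolev embedding. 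Your route is more self-contained (the paper's "density argument" is itself not spelled out), but it buys this at the cost of the subcritical margin $p_0+\eps\le 2^{*}_{s_0}$: since $2^{*}_{s_0}\to 2$ as $s_0\to0^+$, this quietly restricts the lemma to $\beta<2^{*}_{s_0}$, which covers the only case the paper actually uses ($\beta=2$, $p(s)\to 2$ with $p'(0)<4/N$, so the margin survives) but not the statement's literal "every $\beta>0$".

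One further caveat, which your proof shares with the paper's: hypothesis \eqref{a:regulrity-asym} only gives $a(\cdot,x)\in C^1([0,\tfrac14])$ for a.e.\ \emph{fixed} $x$, with no uniform-in-$x$ control of $a$ or $\partial_s a$. Your dominating function $C(x)(1+|\ph(x)|^{p_0+\eps})$ therefore has a factor $C(x)=\sup_{[0,s_0]}(|a(\cdot,x)|+|\partial_s a(\cdot,x)|)$ that is finite a.e.\ but is not guaranteed to be in any Lebesgue space by \eqref{a:regulrity-asym} alone, so dominated convergence does not strictly apply without invoking the additional integrability of $a$ assumed elsewhere (e.g.\ \eqref{a:regulrity-1-asym} or \eqref{weifunc:bound}). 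Since the paper's own proof glosses over exactly the same point, I would not count this against you, but it is worth stating explicitly which extra hypothesis on $a$ you are using to make $C(x)$ integrable.
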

\begin{proof}
    Let $\ph \in C^\infty_c(\Omega),$ using Taylor's expansion (about $s=0$) for the function $a(\cdot,x)|\ph|^{p(\cdot)}$ and by \eqref{a:regulrity-asym}, we get
    \[
    \begin{split}
     a(s,x) \abs{\ph}^{p(s)} &= a(0,x)\abs{\ph}^{\beta} + s\l(\frac{d}{ds} {\bigg|_{s=0}}{a(s,x)\abs{\ph}^{p(s)}}\r) + o(s)\\
     & = \abs{\ph}^{\beta} + s {\l(a'(0,x)\abs{\ph}^{\beta}+ \abs{\ph}^{\beta}p'(0)\ln (|\ph|)\r)}+o(s)\\
     & = \abs{\ph}^{\beta} + s\abs{\ph}^{\beta} {\l(a'(0,x)+ p'(0)\ln (|\ph|)\r)} + o(s).
    \end{split}
    \]
Therefore,
    \[
    \begin{split}
      \into a(s,x)|\ph|^{p(s)} ~dx &= \into \l(\abs{\ph}^{\beta} + s\abs{\ph}^{\beta} {\l(a'(0,x)+ p'(0)\ln (|\ph|)\r)} + o(s)\r) ~dx \\
      & = \|\ph\|^\beta_{L^\beta(\Omega)}  + s \into \left(a'(0,x)+p'(0)\ln \abs{\ph}\right)\abs{\ph}^\beta ~dx +o(s).
    \end{split}
    \]
The result follows for a general $\ph \in H^s_0(\Omega)$ via density argument.
\end{proof}

\begin{lemma}
    \label{lem:log-conv}
    Let $(s_n)_{n\in \mathbb N}, (t_n)_{n\in \mathbb N} \subset (0,\frac{1}{4})$ be such that $\lim_{n\to\infty} s_n = 0$, \\ $\lim_{n\to\infty} t_n = 0$. Let $\beta$ be as in \eqref{assumption on beta} and $g: (0,\frac{1}{4})\times\Omega \to \R$ be such that 
    \begin{equation}
    \label{unif:conv}
    \sup_{n \in \mathbb{N}} \|g(s_n,x)\|_{L^{\beta(s_n)}(\Omega)} < + \infty \ \text{and} \ 
    \lim_{n \to \infty} g(s_n,\cdot) = g(0,\cdot) \quad \text{a.e. in} \ \Omega.
    \end{equation}
    Let $u_0 \in L^2(\Omega)$ and $(u_n)_{n\in\mathbb N}\subset L^2(\Omega)$ be such that $u_n \to u_0$ in $L^2(\Omega)$ as $n \to \infty$. Then, up to to a subsequence
    \[\lim_{n\to\infty} \into g(s_n,x)\ln (u_n^2) \abs{u_n}^{t_n}u_n\ph ~dx = \into g(0,x)\ln (u_0^2)u_0\ph ~dx \quad \text{for all} \ \ph \in C_c^\infty(\Omega).\]
\end{lemma}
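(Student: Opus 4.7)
I would apply a Vitali-type convergence argument after passing to a subsequence. The first step is to extract (without relabeling) a subsequence of $u_n$ so that $u_n \to u_0$ a.e.\ in $\Omega$ and $|u_n| \leq h$ a.e.\ for some $h \in L^2(\Omega)$---the standard consequence of $L^2$-convergence via a fast-converging subsequence. Combined with the assumed a.e.\ convergence $g(s_n,\cdot) \to g(0,\cdot)$, with $t_n \to 0$, and with the fact that $s \mapsto |s|^{1+t_n}\ln(s^2)$ extends continuously to $s=0$ with value $0$, one obtains the pointwise a.e.\ convergence of the integrand
\[
F_n(x) := g(s_n,x)\ln(u_n^2)|u_n|^{t_n}u_n\,\varphi(x) \longrightarrow F_0(x) := g(0,x)\ln(u_0^2)u_0\,\varphi(x).
\]

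Next, I would produce a dominating estimate by splitting $\Omega$ into $\{|u_n|\leq 1\}$ and $\{|u_n|>1\}$. On the first region, the continuous extension of $s \mapsto |s|^{1+t_n}|\ln s^2|$ to $[0,1]$ yields a uniform bound. On the second, $|\ln s^2|\leq C_\epsilon |s|^\epsilon$ with $\epsilon = \tfrac14$ together with $t_n \leq \tfrac14$ eventually gives $|\ln(u_n^2)||u_n|^{1+t_n}\leq C\,h^{3/2}$. Therefore, for large $n$,
\[
|F_n(x)| \leq C\,|g(s_n,x)|\,|\varphi(x)|\bigl(1+h(x)^{3/2}\bigr),
\]
where $\varphi$ is bounded and $h^{3/2}\in L^{4/3}(\Omega)$.

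Uniform integrability would then follow from H\"older's inequality against the $L^{\beta(s_n)}$-bound on $g$: for any measurable $E\subseteq\Omega$,
\[
\int_E |F_n|\,dx \leq C\,\|g(s_n,\cdot)\|_{L^{\beta(s_n)}(\Omega)}\,\bigl\|\mathbf{1}_E\,|\varphi|(1+h^{3/2})\bigr\|_{L^{\beta(s_n)'}(\Omega)}.
\]
The first factor is uniformly bounded by hypothesis \eqref{unif:conv}. By \eqref{assumption on beta}, $\beta(s_n)\to\infty$, so eventually $\beta(s_n)'\leq 4/3$; hence $|\varphi|(1+h^{3/2})$ sits uniformly in $L^{\beta(s_n)'}(\Omega)$, and a further H\"older step in $L^{\beta(s_n)'}(E)$ shows that the second factor vanishes uniformly in $n$ as $|E|\to 0$. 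Vitali's convergence theorem would then yield $\int_\Omega F_n\,dx \to \int_\Omega F_0\,dx$, which is the stated conclusion.

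\textbf{Main obstacle.} The delicate point is coordinating two opposite tendencies: the logarithmic singularity of $\ln(u_n^2)$ where $u_n$ is small, and the degeneration $\beta(s_n)'\to 1$ of the dual H\"older exponent. Both will be handled simultaneously by absorbing the logarithm via the $\epsilon$-trick at large $|u_n|$ and by boundedness at small $|u_n|$, leaving a residual power $h^{3/2}$ whose integrability threshold $L^{4/3}$ fits comfortably inside the window opened by $\beta(s_n)\to\infty$.
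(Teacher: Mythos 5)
Your proof is correct and follows essentially the same route as the paper's: split at $\{|u_n|\le 1\}$ versus $\{|u_n|>1\}$, absorb the logarithm into a small power of $|u_n|$, dominate by an $L^2$ majorant of the subsequence, exploit $\beta(s_n)\to\infty$, and conclude with Vitali. The only difference is that you implement the equi-integrability via H\"older against $\|g(s_n,\cdot)\|_{L^{\beta(s_n)}}$ at the integral level, whereas the paper uses a pointwise Young inequality to dominate by $1+|g(s_n,x)|^{\beta(s_n)}+|V|^2$; your variant uses the hypothesis \eqref{unif:conv} more directly.
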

\begin{proof} 

Since $\Omega$ is bounded, by dominated convergence theorem, we have
\[\lim_{n\to\infty} \int_{\{|u_n|< 1\}} g(s_n,x)\ln (u_n^2) \abs{u_n}^{t_n}u_n\ph ~dx = \int_{\{|u_0|< 1\}} g(0,x)\ln (u_0^2) u_0\ph ~dx.\]
For every $\delta>0$, there exists $c=c(\delta)$ (independent of $n$) such that
\[\ln (|u_n|^2) \leq c |u_n|^{2\delta} \quad  \text{for all} \ |u_n| \geq 1. \]
We choose $\delta$ such that $4s_n+2\delta + t_n <1$ for all $n \geq N_0.$ Since $u_n \to u$ in $L^2(\Omega)$, by \cite[Lemma A.1]{Willem}, there exists a $V\in L^2(\Omega)$ such that 
\[
    |u_n|< V \quad \text{for all} \ n\in \N, \ \text{a.e.} \ x  \in \Omega.
\]
Since $\beta(s) > \frac{2_s^\ast}{2_s^\ast-p(s)} > \frac{N}{2s} \geq \frac{1}{2s}$ and by the above choice of $\delta$, we have  
\[
\frac{2}{1-2\delta-t_n} < \beta(s_n).
\]
Thus, by applying Young's inequality, for all $ n\geq N_0$, for a.e. $x \in \Omega$, we get
\[
\begin{split}
   |g(s_n,x)\ln (u_n^2)\abs{u_n}^{t_n}u_n\ph|&\leq |g(s_n,x)||V|^{2\delta +1+t_n}|\ph|\\ 
   & \leq C(\|\ph\|_{L^\infty(\Omega)}) \l( |g(s_n,x)|^{\frac{2}{1-2\delta-t_n}}+ |V|^2\r)\\
   & \leq C(\|\ph\|_{L^\infty(\Omega)}) \l(1+ |g(s_n,x)|^{\beta(s_n)}+ |V|^2\r).
\end{split}
\]
Finally, by using Vitali convergence theorem and \eqref{unif:conv}, we have
\[\lim_{n \to \infty} \int_{\{|u_n| \geq  1\}} g(s_n,x)\ln (u_n^2) \abs{u_n}^{t_n}u_n\ph ~dx = \int_{\{|u_0| \geq 1\}} g(0,x)\ln (u_0^2) u_0\ph ~dx.\]
Hence the result.
\end{proof}
\begin{remark}
By \cite[eq. (3.4)]{Santamaria-Saldana-2022}, we have  \[
{\l(\kappa_{N,s}\r)}^{\frac{p(s)}{2(2-p(s))}}<+\infty \quad \text{for all} \  s \in (0,\frac{1}{4}).
\]
Furthermore, for all $s \in (0,\frac{1}{4})$, by \eqref{assumption on beta}, \ ${|\Omega|}^{\frac{1}{s\beta(s)}} <+\infty$.
Thus, \begin{equation}
\label{lower bound on s-norm}
    M:= \sup_{s \in (0,\frac{1}{4})} {\l(\|a(s,\cdot)\|_{L^{\beta(s)}(\Omega)} \  {|\Omega|}^{1-\frac{1}{\beta(s)}-\frac{p(s)}{2_s^\ast}} \ (\kappa_{N,s})^{\frac{p(s)}{2}}\r)}^{\frac{1}{2-p(s)}} < +\infty.
\end{equation}
\end{remark}
We now present a uniform lower bound for the solutions of the problem \eqref{Eq:Problem}. This estimate is crucial for demonstrating the non-triviality of the limit of the sequence of solutions \( u_s \) of \eqref{Eq:Problem}.
\begin{lemma}\label{lem:lowerbdd:NS}  
Let $p$ satisfies \eqref{assumption:1}, and $a$ satisfies satisfies \eqref{a:regulrity-asym} and \eqref{weifunc:bound}. Then, $u\in N_s$ and $\|u\|_s \geq M$ for all $s\in(0,\frac{1}{4}).$
    
\end{lemma}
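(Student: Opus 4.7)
The plan is to treat the Nehari identity $\|u\|_s^2 = \int_\Omega a(s,x)|u|^{p(s)}\,dx$ as the starting point, then iteratively push the right-hand side through a chain of Hölder inequalities and the fractional Sobolev embedding until I have $\|u\|_s^2 \leq (\text{constant depending on } s)\cdot\|u\|_s^{p(s)}$. Since the superlinear assumption $p(s)>2$ is in force, rearranging will produce the desired lower bound on $\|u\|_s$.

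First I would apply Hölder's inequality with exponents $\beta(s)$ and $\beta(s)' = \beta(s)/(\beta(s)-1)$ to separate the weight from the solution:
\[
\|u\|_s^2 \leq \|a(s,\cdot)\|_{L^{\beta(s)}(\Omega)}\left(\int_\Omega |u|^{p(s)\beta(s)'}\,dx\right)^{1/\beta(s)'}.
\]
The critical role of hypothesis \eqref{assumption on beta} enters here: the condition $\beta(s)>\tfrac{2_s^\ast}{2_s^\ast-p(s)}$ rearranges precisely to $p(s)\beta(s)' < 2_s^\ast$. This lets me apply Hölder a second time to embed the intermediate Lebesgue norm into $L^{2_s^\ast}$, yielding
\[
\left(\int_\Omega |u|^{p(s)\beta(s)'}\,dx\right)^{1/\beta(s)'} \leq |\Omega|^{1-\frac{1}{\beta(s)}-\frac{p(s)}{2_s^\ast}}\|u\|_{L^{2_s^\ast}(\Omega)}^{p(s)}.
\]

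Next I would invoke the fractional Sobolev inequality (Theorem \ref{Frac_Sobo_ineq}) in the form $\|u\|_{L^{2_s^\ast}(\Omega)}^{p(s)} \leq \kappa_{N,s}^{p(s)/2}\|u\|_s^{p(s)}$, and chain the three estimates to obtain
\[
\|u\|_s^2 \leq \|a(s,\cdot)\|_{L^{\beta(s)}(\Omega)}\,|\Omega|^{1-\frac{1}{\beta(s)}-\frac{p(s)}{2_s^\ast}}\,\kappa_{N,s}^{p(s)/2}\,\|u\|_s^{p(s)}.
\]
Since $u\in N_s$ forces $\|u\|_s>0$ and $p(s)-2>0$, dividing through by $\|u\|_s^2$ and raising to the power $\tfrac{1}{p(s)-2}$ delivers the $s$-dependent lower bound
\[
\|u\|_s \geq \left(\|a(s,\cdot)\|_{L^{\beta(s)}(\Omega)}\,|\Omega|^{1-\frac{1}{\beta(s)}-\frac{p(s)}{2_s^\ast}}\,\kappa_{N,s}^{p(s)/2}\right)^{\frac{1}{2-p(s)}},
\]
which is precisely the expression inside the supremum defining $M$ in \eqref{lower bound on s-norm}.

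The routine part is the chain of inequalities; the key point requiring care is matching the exponents so that the two Hölder applications are legal, and this is exactly what \eqref{assumption on beta} was designed to enforce. The remaining task of uniformity in $s$, i.e.\ that $M$ is a genuine finite constant rather than a merely pointwise bound, is already handled in the remark preceding the lemma via Lemma \ref{lem:asymp-1} (for the factor $\kappa_{N,s}^{p(s)/(2(2-p(s)))}$) and assumption \eqref{weifunc:bound} (for $\|a(s,\cdot)\|_{L^{\beta(s)}(\Omega)}^{1/(p(s)-2)}$), so once the pointwise estimate is established, the lemma follows immediately.
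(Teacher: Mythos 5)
Your proposal is correct and follows essentially the same route as the paper: the same two Hölder applications (with \eqref{assumption on beta} guaranteeing $p(s)\beta(s)'<2_s^\ast$) followed by the fractional Sobolev inequality, leading to $\|u\|_s^2\leq X_s\|u\|_s^{p(s)}$ with $X_s$ the quantity whose power appears in \eqref{lower bound on s-norm}. The only difference is presentational — the paper runs the estimate as a contradiction argument on the functional $J_s(u)=\|u\|_s^2-\int_\Omega a(s,x)|u|^{p(s)}\,dx$ assuming $\|u\|_s<M$, whereas you rearrange directly — and your version inherits exactly the same identification of the pointwise bound $X_s^{1/(2-p(s))}$ with the constant $M$ that the paper itself uses.
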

\begin{proof}
    Let $J_s$: $H^s_0(\Omega)\setminus \{0\} \rightarrow\R$ be given by
    \[J_s(u)= \|u\|_s^2-\into a(s,x)|u|^{p(s)} ~dx.\]
By applying H\"older inequality and Theorem \ref{Frac_Sobo_ineq}, we obtain
\[
\begin{split}
    J_s(u) & \geq \|u\|_s^2- \|a(s, \cdot)\|_{L^{\beta(s)}(\Omega)} \bigg[\into |u|^{p(s)\beta'(s)} ~dx\bigg]^{\frac{1}{\beta'(s)}}\\
    &\geq \|u\|_s^2- \|a(s, \cdot)\|_{L^{\beta(s)}(\Omega)} |\Omega|^{1-\frac{1}{\beta(s)} - \frac{p(s)}{2_s^\ast }} \|u\|_{L^{2_s^\ast}(\Omega)}^{p(s)}\\
    & \geq \|u\|^2_s \left(1-\|a(s, \cdot)\|_{L^{\beta(s)}(\Omega)} |\Omega|^{1-\frac{1}{\beta(s)} - \frac{p(s)}{2_s^\ast }} (\kappa_{N,s})^{\frac{p(s)}{2}} \|u\|^{p(s)-2}_s\right).
\end{split}
\]
If $\|u\|_s \geq M$, we are done. Otherwise, if $\|u\|_s < M$, then from \eqref{lower bound on s-norm}, we have
\[
J_s(u) \geq \|u\|^2_s \left(1-\|a(s, \cdot)\|_{L^{\beta(s)}(\Omega)} |\Omega|^{1-\frac{1}{\beta(s)} - \frac{p(s)}{2_s^\ast }} \kappa_{N,s}^{\frac{p(s)}{2}} \|u\|^{p(s)-2}_s\right) > 0.
\]
Now, if $u\in N_s$ then $J_s(u)=0$, which contradicts the inequality. Hence, $\|u\|_s \geq M$, for all $s \in (0,\frac{1}{4})$ and $u \in N_s$.
\end{proof}

\noindent For a given $\ph\in H_0^s(\Omega) \setminus\{0\}$ such that $\into a(s,x)|\ph|^{p(s)} ~dx > 0$, we denote
\[r_{s,\ph} = \l(\frac{\|\ph\|_s^2}{ \into a(s,x)|\ph|^{p(s)} ~dx} \r)^{\frac{1}{p(s)-2}}.\]

The next result depicts the convergence of the elements in the  Nehari manifold $N_s$ as $s \to 0^+$. 
\begin{lemma}\label{lem:NS:identifi}
     Let $p$ satisfies \eqref{assumption:1} and $a$ satisfies \eqref{a:regulrity-asym} and \eqref{a:regulrity-1-asym}. Then, for any $\ph\in H_0^s(\Omega) \setminus\{0\}$ and $\into a(s,x)|\ph|^{p(s)} ~dx > 0$, $(r_{s,\ph}) \ph \in N_s$
and 
\[\lim_{s\to 0^+} r_{s,\ph} = r_{0,\ph} := \exp{\l(\frac{\mathcal{E}_L(\ph,\ph)-p'(0) \into \ln \abs{\ph}\ph^2 ~dx - \into a'(0,x)\abs{\ph}^2 ~dx}{p'(0)\|\ph\|^2_{L^2(\Omega)}}\r)} >0.\]
In particular, $\sup_{s\in [0,\frac{1}{4}]}r_{s,\ph}<\infty$.
\end{lemma}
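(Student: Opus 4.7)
The membership $r_{s,\varphi}\varphi \in N_s$ is purely algebraic: substituting $u = r\varphi$ into the defining identity $\|u\|_s^2 = \into a(s,x)|u|^{p(s)}\,dx$ of $N_s$ yields the equation $r^{p(s)-2} = \|\varphi\|_s^2\bigl(\into a(s,x)|\varphi|^{p(s)}\,dx\bigr)^{-1}$, whose unique positive solution is precisely $r_{s,\varphi}$. The right-hand side is positive by hypothesis, so $r_{s,\varphi}$ is well-defined.

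For the limit, I would take logarithms in the defining relation to obtain
\[
(p(s)-2)\ln r_{s,\varphi} = \ln\|\varphi\|_s^2 - \ln\l(\into a(s,x)|\varphi|^{p(s)}\,dx\r).
\]
Since $p \in C^1([0,\tfrac{1}{4}])$ with $p(0) = 2$, we have $p(s) - 2 = sp'(0) + o(s)$, so the natural strategy is to expand each logarithm on the right to order $s$ and then divide. For the first term I invoke the expansion of the $H^s$-seminorm from \cite[Lemma 3.7]{Santamaria-Saldana-2022}, namely
\[\|\varphi\|_s^2 = \|\varphi\|_{L^2(\Omega)}^2 + s\,\mathcal{E}_L(\varphi,\varphi) + o(s),\]
and for the second I apply Lemma \ref{lem:tayexp:weighterm} with $\beta = 2$ to obtain
\[
\into a(s,x)|\varphi|^{p(s)}\,dx = \|\varphi\|_{L^2(\Omega)}^2 + s\into\bigl(a'(0,x) + p'(0)\ln|\varphi|\bigr)\varphi^2\,dx + o(s).
\]
Combining these via $\ln(1+x) = x + O(x^2)$, dividing through by $p(s)-2 = sp'(0) + o(s)$, and exponentiating yields the claimed formula for $r_{0,\varphi}$; strict positivity $r_{0,\varphi} > 0$ is automatic from the exponential representation.

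The uniform bound $\sup_{s \in [0,1/4]} r_{s,\varphi} < \infty$ then follows by combining the continuity of the map $s \mapsto r_{s,\varphi}$ on $(0,\tfrac{1}{4}]$, inherited from the $C^1$ regularity of $p$ and $a$ together with the regularity of the norms involved, with the just-established limit at $s = 0^+$. The main technical point I expect to require care is the justification of the two first-order expansions for the admissible $\varphi$: the $H^s$-expansion applies for $\varphi \in \mathbb{H}(\Omega)$, while Lemma \ref{lem:tayexp:weighterm} at $\beta = 2$ needs integrability of $a'(0,\cdot)\varphi^2$ and of $\varphi^2\ln|\varphi|$, which are ensured respectively by hypothesis \eqref{a:regulrity-1-asym} and by the sharp logarithmic Sobolev inequality (Proposition \ref{Log-Sobolev-ineq}). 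A routine density argument from $C_c^\infty(\Omega)$ then extends both expansions to general $\varphi$, and one must only check along the way that the $o(s)$ remainders, after being divided by $p(s)-2 \sim sp'(0)$, still tend to zero uniformly over the relevant range.
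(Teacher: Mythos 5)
Your proposal is correct and follows essentially the same route as the paper: verify membership in $N_s$ algebraically, then combine the first-order expansion $\|\varphi\|_s^2 = \|\varphi\|_{L^2(\Omega)}^2 + s\,\mathcal{E}_L(\varphi,\varphi) + o(s)$ with Lemma \ref{lem:tayexp:weighterm} at $\beta=2$ and pass to the limit (the paper uses the identity $(1+sb+o(s))^{1/s}\to e^b$ where you take logarithms, which is the same computation), and conclude the uniform bound from the continuous extension to $[0,\tfrac14]$. One small point: carried out carefully, the limit acquires the extra term $-\into a'(0,x)\varphi^2\,dx$ in the numerator of the exponent (as the paper's own proof also finds), so it matches the displayed formula for $r_{0,\varphi}$ only when that term vanishes — a discrepancy present in the paper itself rather than a flaw in your argument.
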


\begin{proof}
By definition of $r_{s, \ph}$, $(r_{s, \ph}) \ph \in N_s.$ Now, by using \cite[Lemma 3.7]{Santamaria-Saldana-2022} and Lemma \ref{lem:tayexp:weighterm} with $\beta =2$, we obtain
\[
\begin{split}
    \lim_{s\to 0^+} r_{s,\ph} &= \lim_{s\to 0^+} {\l(\frac{\|\ph\|^2_{L^2(\Omega)}+s\mathcal{E}_L(\ph,\ph)+o(s)}{\|\ph\|^2_{L^2(\Omega)} +s \into (a'(0,x)+p'(0)\ln  
    \abs{\ph})\abs{\ph}^2 ~dx +o(s)}\r)}^{\frac{1}{p(s)-2}}.
\end{split}
\]
Setting $A=\|\ph\|^2_{L^2(\Omega)}$ and using \cite[Lemma 3.1]{Santamaria-Saldana-2022}, we get,
\[
\begin{split}
 \lim_{s\to 0^+} r_{s,\ph} &= \lim_{s\to0^+}{\l(\frac{{(1+sA^{-1}\mathcal{E}_L(\ph,\ph)+o(s))}^{\frac{1}{s}}}{{(1+\frac{s}{A} \into (a'(0,x)+p'(0)\ln \abs{\ph})\abs{\ph}^2 ~dx +o(s))}^{\frac{1}{s}}}\r)}^{\frac{s}{p(s)-2}} \\
&= {\l(\frac{\exp({A^{-1}\mathcal{E}_L(\ph,\ph)})}{\exp({A^{-1}\into a'(0,x)\abs{\ph}^2 ~dx})  .\exp({A^{-1}p'(0)\into \ln \abs{\ph} \ph^2 ~dx})}\r)}^{\frac{1}{p'(0)}}\\
&=  \exp{\l(\frac{\mathcal{E}_L(\ph,\ph)-p'(0)\into \ln \abs{\ph} \ph^2 ~dx - \into a'(0,x)\abs{\ph}^2 ~dx}{p'(0)\|\ph\|^2_{L^2(\Omega)}}\r)}  > 0. 
 \end{split}
 \]
This implies that the map $s \to r_{s, \ph}$ has a continuous extension on $[0,\frac{1}{4}]$. Hence $\sup_{s\in [0,\frac{1}{4}]} r_{s,\ph} < \infty.$
\end{proof}
Next, we show uniform upper estimates for the solutions $u_s$ of the problem \eqref{Eq:Problem}, for all $s \in (0, \frac{1}{4})$.
\begin{proposition}
     \label{uniform-bdd_s}
Let $s \in (0,\frac{1}{4})$, $p$ as in assumption \eqref{assumption:1}, $a$ satisfies \eqref{a:regulrity-asym}, \eqref{weifunc:bound} and \eqref{a:regulrity-1-asym}. Let $u_s \in N_s$ be a least energy solution of \eqref{Eq:Problem}, then, there is a constant $C=C(p,\Omega)>0$ such that 
\[
\begin{split}
\|u_s\|^2 = \mathcal{E}(u_s,u_s)<C \quad \text{for all} \ s\in (0,\frac{1}{4}).
\end{split}
\]
\end{proposition}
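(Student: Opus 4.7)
The plan is to combine a fibering-map upper bound on the mountain-pass level with the uniform norm-equivalence on Nehari manifolds provided by Lemma~\ref{lem:unif-bound}. Since $u_s \in N_s$, we have the Nehari identity
\[
E_s(u_s) = \left(\frac{1}{2} - \frac{1}{p(s)}\right) \|u_s\|_s^2,
\]
so it suffices to control $E_s(u_s)$ by the same linear-in-$s$ factor that appears as the coefficient $\tfrac{1}{2}-\tfrac{1}{p(s)}$.

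First I would fix a nonnegative $\varphi \in C_c^\infty(\Omega)\setminus\{0\}$. Because $a(s,\cdot) \to 1$ a.e.\ as $s \to 0^+$ and $a$ is bounded, we have $\into a(s,x)|\varphi|^{p(s)}~dx > 0$ for $s$ sufficiently small, so Lemma~\ref{lem:NS:identifi} gives $r_{s,\varphi}\varphi \in N_s$ with $r_{s,\varphi} \to r_{0,\varphi} \in (0,\infty)$ as $s \to 0^+$. A direct computation using the defining relation $r_{s,\varphi}^{p(s)-2} = \|\varphi\|_s^2/\into a(s,x)|\varphi|^{p(s)}~dx$ yields
\[
E_s(r_{s,\varphi}\varphi) = \left(\frac{1}{2}-\frac{1}{p(s)}\right) r_{s,\varphi}^2 \|\varphi\|_s^2.
\]
Since $p \in C^1([0,1/4])$ with $p(0)=2$, a Taylor expansion gives $\tfrac{1}{2}-\tfrac{1}{p(s)} = \tfrac{p'(0)}{4}s + o(s)$, and by \cite[Lemma 3.7]{Santamaria-Saldana-2022} one has $\|\varphi\|_s^2 = \|\varphi\|_{L^2(\Omega)}^2 + O(s)$. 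Hence there exist $s_0 \in (0,1/4)$ and $C_\varphi > 0$ (depending only on $\varphi$, $p$, $\Omega$) such that $E_s(r_{s,\varphi}\varphi) \leq C_\varphi s$ for all $s \in (0,s_0)$.

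Since $u_s$ minimizes $E_s$ over $N_s$, $E_s(u_s) \leq E_s(r_{s,\varphi}\varphi) \leq C_\varphi s$ for $s \in (0,s_0)$. Inserting this into the Nehari identity above gives
\[
\left(\frac{p'(0)}{4}s + o(s)\right)\|u_s\|_s^2 \leq C_\varphi s,
\]
from which $\sup_{s \in (0,s_0)} \|u_s\|_s^2 < \infty$ follows. For $s \in [s_0,1/4)$, the prefactor $\tfrac{1}{2}-\tfrac{1}{p(s)}$ is bounded away from zero while $s \mapsto E_s(r_{s,\varphi}\varphi)$ is continuous and finite, so the same comparison yields uniform boundedness of $\|u_s\|_s^2$ on $[s_0,1/4)$. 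An application of Lemma~\ref{lem:unif-bound} then transfers this uniform bound to $\|u_s\|^2 = \mathcal{E}(u_s,u_s)$, completing the proof.

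The main obstacle is the simultaneous vanishing: both $\tfrac{1}{2}-\tfrac{1}{p(s)}$ and the minimal energy $E_s(u_s)$ are of order $s$ as $s \to 0^+$, so a crude estimate $E_s(u_s) = O(1)$ gives no information whatsoever on $\|u_s\|_s^2$. One must extract the sharp first-order-in-$s$ expansion of the comparison energy $E_s(r_{s,\varphi}\varphi)$ and match it precisely against the analogous first-order expansion of the coefficient in front of $\|u_s\|_s^2$; Lemmas~\ref{lem:tayexp:weighterm} and~\ref{lem:NS:identifi} are the technical ingredients that deliver this matching.
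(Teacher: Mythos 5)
Your proof is correct and follows essentially the same route as the paper: compare $u_s$ with the element $r_{s,\varphi}\varphi \in N_s$ via Lemma~\ref{lem:NS:identifi} to get $\sup_s \|u_s\|_s^2 < \infty$, then invoke Lemma~\ref{lem:unif-bound}. The only difference is cosmetic: since $E_s(v) = \l(\tfrac12-\tfrac1{p(s)}\r)\|v\|_s^2$ on $N_s$ with the same positive factor appearing on both sides of the comparison, the paper simply observes $\|u_s\|_s^2 = \inf_{N_s}\|\cdot\|_s^2 \leq r_{s,\varphi}^2\|\varphi\|_s^2$, so the first-order Taylor expansion in $s$ you perform (and the concern about ``simultaneous vanishing'') is unnecessary --- the factor cancels exactly for each fixed $s$.
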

\begin{proof}
    Let $\ph\in H_0^s(\Omega)\setminus{\{0\}}$.
    By Lemma \ref{lem:NS:identifi} and using the fact that $u_s$ is least energy solution, we get
     \begin{equation} 
     \label{bound on s-norm}
    \|u_s\|_s^2 = \inf_{v\in N_s} \|v\|^2_s \leq (r_{s,\ph})^2\|\ph\|^2_s \leq \sup_{s\in(0,\frac{1}{4})} (r_{s,\ph})^2\|\ph\|^2_s =:C_0 < \infty.
    \end{equation}
Finally, by using $\mathcal{E}(u_s, u_s) \leq \mathcal{E}_s(u_s, u_s) = \|u_s\|_s^2$, we have the desired result.
\end{proof}
\subsection{Asymptotics for non-local superlinear problem}
In this subsection, we study the asymptotics for the non-local problem \eqref{Eq:Problem} with superlinear growth, {\it i.e.,} $p(\cdot)$ satisfies \eqref{assumption:1}. We begin with proving the existence of a solution for the problem \eqref{Eq:Problem} with suitable conditions on the weight function $a(s, \cdot)$.
\begin{theorem}
    \label{theorem-6.1}
Let $s\in (0,\frac{1}{4})$, $p(s)\in (2, 2^*_s)$ and $a: [0, \frac{1}{4}] \times \Omega \to \mathbb{R}^+$ be such that
\[
a(s, \cdot) \in L^{\frac{2^*_s}{2^*_s-p(s)}}(\Omega) \quad \text{for all} \ s \in [0, \frac{1}{4}].
\]
There exists a non-zero least energy weak solution $u_s$ of the problem \eqref{Eq:Problem}. Also, if $a(s,\cdot)$ is non-negative, for all $s\in [0, \frac{1}{4}]$, then, all least energy solutions of \eqref{Eq:Problem} are either positive or negative in $\Omega$.
\end{theorem}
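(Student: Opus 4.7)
The plan is to follow the classical Nehari manifold / constrained minimization framework adapted to the fractional Laplacian, exploiting the strict subcriticality $p(s)<2^\ast_s$ to recover compactness (which is precisely what is lost in the limiting problem \eqref{prob:logarithmic-laplace}). Under the integrability assumption $a(s,\cdot)\in L^{2^\ast_s/(2^\ast_s-p(s))}(\Omega)$, H\"older's inequality combined with the fractional Sobolev embedding (Theorem \ref{Frac_Sobo_ineq}) yields
\[
\into a(s,x)|u|^{p(s)}\,dx \leq \|a(s,\cdot)\|_{L^{2^\ast_s/(2^\ast_s-p(s))}(\Omega)}\,\kappa_{N,s}^{p(s)/2}\,\|u\|_s^{p(s)},
\]
so $E_s$ is well defined, of class $C^1$ on $H^s_0(\Omega)$, and the same is true for $J_s(u):=\|u\|_s^2-\into a(s,x)|u|^{p(s)}\,dx$. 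A fibering analysis of $t\mapsto E_s(tu)$ along any $u$ with $\into a(s,x)|u|^{p(s)}\,dx>0$ (in the spirit of Lemma \ref{lem:NS:identifi}) gives a unique positive maximizer $t=r_{s,u}$, producing an element of $N_s$ and showing $N_s\neq\emptyset$.

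The second step is to check that $N_s$ is a $C^1$ submanifold of $H^s_0(\Omega)\setminus\{0\}$ bounded away from the origin. The argument of Lemma \ref{lem:lowerbdd:NS} gives a uniform lower bound $\|u\|_s\geq M_s>0$ on $N_s$, where $M_s:=\bigl(\|a(s,\cdot)\|_{L^{2^\ast_s/(2^\ast_s-p(s))}(\Omega)}\kappa_{N,s}^{p(s)/2}\bigr)^{1/(2-p(s))}$. Moreover $(J_s'(u),u)_{H^s}=(2-p(s))\|u\|_s^2<0$ on $N_s$, so $0$ is a regular value of $J_s$ and $N_s=J_s^{-1}(0)$ is a genuine $C^1$ manifold on which Lagrange multiplier arguments apply.

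For the existence of a least energy solution, I use the identity $E_s(u)=\bigl(\tfrac12-\tfrac{1}{p(s)}\bigr)\|u\|_s^2$ valid on $N_s$, which gives $\inf_{N_s}E_s\geq \bigl(\tfrac12-\tfrac{1}{p(s)}\bigr)M_s^2>0$ and ensures that any minimizing sequence $(u_n)\subset N_s$ is bounded in $H^s_0(\Omega)$. Applying Ekeland's variational principle on $N_s$, I extract an (almost) Palais--Smale sequence. By the reflexivity of $H^s_0(\Omega)$, up to subsequence $u_n\rightharpoonup u_s$ weakly in $H^s_0(\Omega)$, and by the compact embedding $H^s_0(\Omega)\hookrightarrow L^q(\Omega)$ for $q<2^\ast_s$ together with H\"older, $\into a(s,x)|u_n|^{p(s)}\,dx \to \into a(s,x)|u_s|^{p(s)}\,dx$. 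Exactly as in the proof of Theorem \ref{Main-res-limitingprob}, the Lagrange multiplier vanishes in the limit, so $u_s$ is a weak critical point of $E_s$. The uniform lower bound $\|u_n\|_s\geq M_s$ together with the strong $L^{p(s)}$-type convergence forces $u_s\in N_s$, giving a least energy weak solution with $E_s(u_s)=\inf_{N_s}E_s$.

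Finally, for the sign property I would use that $|u_s|\in H^s_0(\Omega)$ with $\|\,|u_s|\,\|_s\leq \|u_s\|_s$ (an immediate consequence of $\bigl||r|-|t|\bigr|\leq|r-t|$ inside the Gagliardo seminorm) and $\into a(s,x)\,|\,|u_s|\,|^{p(s)}\,dx=\into a(s,x)|u_s|^{p(s)}\,dx$. Hence $r_{s,|u_s|}\,|u_s|\in N_s$ with $r_{s,|u_s|}\leq 1$, and the chain
\[
\inf_{N_s}E_s \leq E_s\bigl(r_{s,|u_s|}|u_s|\bigr)=\Bigl(\tfrac12-\tfrac{1}{p(s)}\Bigr)r_{s,|u_s|}^2\|\,|u_s|\,\|_s^2 \leq \Bigl(\tfrac12-\tfrac{1}{p(s)}\Bigr)\|u_s\|_s^2=\inf_{N_s}E_s
\]
collapses to equalities, forcing $r_{s,|u_s|}=1$ and making $|u_s|$ itself a least energy solution; the fractional strong maximum principle then yields $|u_s|>0$ in $\Omega$, so $u_s$ has constant sign. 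The step I expect to be most delicate is the identification of the weak limit as an element of $N_s$ rather than of its closure, since the weight $a(s,\cdot)$ is merely in a Lebesgue space: one must carefully match the compact embedding exponent with the H\"older conjugate of $\|a(s,\cdot)\|_{L^{2^\ast_s/(2^\ast_s-p(s))}(\Omega)}$ to pass the weighted nonlinearity to the limit and to preserve the Nehari constraint.
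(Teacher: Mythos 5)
Your proposal follows essentially the same route as the paper's proof: the Nehari manifold $N_s$ as a $C^1$ level set of $J_s$ with $(J_s'(u),u)=(2-p(s))\|u\|_s^2<0$, the uniform lower bound of Lemma \ref{lem:lowerbdd:NS}, Ekeland's variational principle with vanishing Lagrange multipliers, passage to the limit in the weighted nonlinearity to keep the constraint, and the sign argument via $r_{s,|u_s|}\leq 1$ combined with the strong maximum principle of \cite{Pezzo-Quaas-2017}. The delicate point you flag (passing $\into a(s,x)|u_n|^{p(s)}\,dx$ to the limit when $a(s,\cdot)$ lies exactly in $L^{2^*_s/(2^*_s-p(s))}(\Omega)$) is present in the paper's argument as well and is handled the same way, so the proposal is correct.
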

\begin{proof}
    Let $\psi : H^s_0(\Omega)\setminus\{0\} \to \R$ be given by
    \[\psi(u) = \|u\|_s^2\ - \into a(s,x)\abs{u}^{p(s)} ~dx
    \]
 and 
 \[(\psi'(u), u)_{\mathbb{H}}= 2\|u\|^2_s-{p(s)}\into a(s,x)\abs{u}^{p(s)} ~dx.\]
Moreover,
\[
    (\psi'(u), u)_{\mathbb{H}} = 2\|u\|_s^2-p(s)\into a(s,x)\abs{u}^{p(s)} ~dx = \l(2-p(s)\r)\|u\|^2_s <0
    \]
which in turn implies $0$ is a regular value of $\psi$. Therefore, $N_s = \psi^{-1} (0)$ is a $C^1$ manifold.  Clearly, any minimizer $u$ of $E_s$ restricted to $N_s$ satisfies $\psi(u)=0,$ so we can apply the critical point theory on $N_s$ to get critical points of $E_s.$ By Ekeland's variational principle \cite[Corollary 3.4]{Ekeland} and Lemma \ref{lem:lowerbdd:NS}, there exists $(u_n)_{n\in \mathbb{N}} \subset N_s$, $(\zeta_n)_{n\in \mathbb{N}}\subset \R$, and $C>1$ such that  for all $n\in \mathbb N$, $C^{-1}\leq \|u_n\|_s\leq C,$
\begin{equation}
    \label{variationalprinciple-2}
   0\leq E_s(u_n)-\inf_{N_s} E_s \leq \frac{1}{n^2}, \quad \|E_s'(u_n)-\zeta_n \psi'(u_n)\|_{\mathcal{L}(H^s_0(\Omega),\R)}\leq \frac{1}{n}.
\end{equation}
Moreover, as $n \to \infty$
\[
\begin{split}
& o(1) = \frac{1}{\|u_n\|_s}\l(E'_s(u_n)u_n-\zeta_n \psi'(u_n)u_n \r)\\
& \ =\frac{1}{\|u_n\|_s}\l(-\zeta_n(2-p(s))\|u_n\|^2_s\r) = \zeta_n(p(s)-2)\|u_n\|_s.
\end{split}
\]
Therefore, $\zeta_n \to 0$ as $n \to \infty$. From above calculations we get, $\|E'_s(u_n)\|_{\mathcal{L}(H^s_0(\Omega),\R)} \to 0$ as $n\to \infty$ and there exists a $u_s \in H^s_0(\Omega)\setminus\{0\}$ such that on  passing to a subsequence, $u_n \rightharpoonup u_s$ in $H^s_0(\Omega)$ and consequently, $u_n\to u_s$ in $L^{p(s)}(\Omega)$ as $n\to \infty$. Now, for $\ph \in C^\infty_c(\Omega)$, we have
\begin{align}
\label{4.18 weak sol}
\begin{split}
 0 &= \lim_{n\to \infty} (E_s'(u_n), \ph)_{\mathbb{H}} = \lim_{n\to \infty} \left(\mathcal{E}_s(u_n,\ph)-\into a(s,x)\abs{u_n}^{p(s)-2}u_n \ph ~dx \right)\\
 &= \mathcal{E}_s(u_s,\ph)-\into a(s,x)\abs{u_s}^{p(s)-2}u_s \ph ~dx.
\end{split}
\end{align}
This leads us via density arguments to the conclusion that $u_s$ is a non-trivial solution of the problem \eqref{Eq:Problem}. Assume, $(\ph_n)_{n\in \mathbb N} \subset C^\infty_c(\Omega)$ such that $\ph_n \to u_s$ in $H_0^s(\Omega)$ as $n\to \infty$. By \cite[Theorem 1.1]{Cotsiolis-Tavoularis-2004} and \eqref{4.18 weak sol}, we get
 \[
 \begin{split}
 0 &= \lim_{n\to \infty} \l(\mathcal{E}_s(u_s,\ph_n) - \into a(s,x) |u_s|^{p(s)-2} u_s \ph ~dx \r)\\
 &=\mathcal{E}_s(u_s,u_s) - \into a(s,x) |u_s|^{p(s)} ~dx.
 \end{split}
 \]
 Therefore, $u_s \in N_s$. Since $u_n , u_s \in N_s$ and $u_n \to u_s$ in $L^{p(s)}(\Omega)$,  one has, up to a subsequence,
\[
\begin{split}
\inf_{N_s} E_s &= \lim_{n\to \infty} E_s(u_n) = \frac{1}{2} \lim_{n\to \infty} \|u_n\|^2_s-  \lim_{n\to \infty} \frac{1}{p(s)} \into a(s,x)\abs{u_n}^{p(s)} ~dx \\
&= \l(\frac{1}{2}- \frac{1}{p(s)}\r) \into a(s,x)\abs{u_s}^{p(s)}~dx = E_s(u_s)  \geq   \inf_{N_s} E_s.
 \end{split}
 \]
 Let $u_s$ be the least energy solution and $r_{s,\abs{u_s}}$ be defined as in Lemma \ref{lem:NS:identifi}. Then, $\| \abs{u_s} \|_s \leq \|u_s\|_s$ , $r_{s,\abs{u_s}}\leq 1$, and $r_{s,\abs{u_s}}\abs{u_s}\in N_s$. Now,
 \[E_s(u_s) \leq E_s(r_{s,\abs{u_s}}\abs{u_s}) \leq E_s(u_s),\]
 yielding $r_{s,\abs{u_s}} =1$ and $\abs{u_s}$ is a non-negative least energy solution of \eqref{Eq:Problem}. By the strong maximum principle (see \cite[Theorem 1.2]{Pezzo-Quaas-2017}), $\abs{u_s}>0$ in $\Omega$. Thus, we can conclude that $u_s$ is either strictly positive or strictly negative in $\Omega$.
 \end{proof}
 
\noindent \textit{Proof of Theorem \ref{thm:asym-super} :} 
 Let $\{u_{s}\} \subset N_s$ be a sequence of least energy solutions of the  problem \eqref{Eq:Problem}. The existence of such a sequence is given by Theorem \ref{theorem-6.1}.
 By \eqref{bound on s-norm} and \cite[Lemma 3.6]{Santamaria-Saldana-2022},
 \[\|u_{s}\|_{s} \leq C_0 \ \text {for some} \ C_0 > 0 \  \text{depending on} \ \Omega,\ p.\]
  Thus, by Proposition \ref{uniform-bdd_s}, $\{u_{s}\}$ is uniformly bounded in $\h(\Omega)$. Therefore, by the compact embedding\ $ \h(\Omega) \hookrightarrow L^2(\Omega),$ we have
 \[ u_{s} \rightharpoonup u_0 \  \text{in} \  \h(\Omega), \quad  u_{s} \to u_0 \ \text{in} \ L^2(\Omega) \  \text{as} \ s \to 0^+. \]
 Now, using \cite[Theorem 1.1]{Chen-Weth-2019}, \eqref{a:regulrity-asym} and assuming $\ph \in C_c^\infty(\Omega)$, we obtain
 \[
 \begin{split}
     \into & u_{s} (\ph + s L_\Delta \ph +o(s)) ~dx= \into u_{s}(-\Delta)^{s} \ph ~dx = \into a(s,x) \abs{u_{s}}^{p(s)-2} u_{s}\ph ~dx\\ 
     &=\into u_{s} \bigg(1+ s \int_0^1 \bigg[ \big[a'(s \tau,x) +a(s \tau,x)\ln \abs{u_{s}} p'(s\tau)\big]\abs{u_{s}}^{p(s \tau)-2}\bigg] ~d\tau+o(s) \bigg)\ph ~dx
 \end{split}
 \]
 in $L^\infty(\Omega).$ Thus, by \cite[eq (3.11)]{Chen-Weth-2019}, we have
 \[
 \begin{split}
 \mathcal{E}_L(u_{s}, \ph) & + o(1) = \into u_{s} L_\Delta \ph ~dx +o(1)\\
 &= \into \ph \int_0^1 \bigg[ \big[a'(s \tau,x)+a(s\tau,x)\ln \abs{u_{s}} p'(s\tau)\big]\abs{u_{s}}^{p(s \tau )-2} \bigg]u_{s} ~d\tau  ~dx
 \end{split}
 \]
 as $s \to 0^+$, for all $\ph \in C^\infty_c(\Omega).$ Using \eqref{a:regulrity-asym}, Lemma \ref{lem:log-conv}, passing to a subsequence and letting $s \to 0^+$, we obtain
 \[\mathcal{E}_L(u_0, \ph)= \into  \bigg[a'(0,x)+p'(0)\ln \abs{u_0}\bigg]u_0 \ph ~dx \quad \text{for all} \ \ph \in C^\infty_c(\Omega).\]
 Therefore, by density arguments, we can conclude that $u_0$ is a weak solution of the problem \eqref{Eq:LimitingProblem}. 
By using $\|u_{s}\|_{s} \leq C_0$, Lemma \ref{lem:lowerbdd:NS}, H\"older inequality and Theorem \ref{Frac_Sobo_ineq}, we get
 \begin{equation}\label{est:non-trivial-1}
     M^2 \leq \|u_{s}\|_{s}^2 \leq \|a(s,x)\|_{L^{\beta(s)}(\Omega)} {\bigg[\into |u_{s}|^{p(s)\beta'(s)} ~dx \bigg] }^{\frac{1}{\beta'(s)}}
 \end{equation}
 and 
 \begin{equation}\label{est:non-trivial-2}
     \begin{split}
     \into |u_{s}|^{p(s)\beta'(s)} ~dx & = \into |u_{s}|^{ \alpha_k} |u_{s}|^{\beta_k} ~dx\\
     & \leq \l(\into |u_{s}|^2 ~dx \r)^{1-\lambda_s} \l(\into |u_{s}|^{2_{s}^\ast} ~dx \r)^{\lambda_s}\\
     & \leq \|u_{s}\|_{L^2(\Omega)}^{2(1-\lambda_s)} k_{N,s}^{\frac{\lambda_s 2_{s}^\ast}{2}} \|u_{s}\|_{s}^{\lambda_s 2_{s}^\ast}  \leq \|u_{s}\|_{L^2(\Omega)}^{2(1-\lambda_s)} k_{N,s}^{\frac{\lambda_s 2_{s}^\ast}{2}} C_0^{\lambda_s 2_{s}^\ast},
      \end{split}
 \end{equation}
 where 
 \[
\lambda_s:= \frac{p(s) \beta'(s)-2}{2_{s}^\ast-2} , \quad \alpha_s = (1-\lambda_s)2, \ \beta_s = \lambda_s 2_{s}^\ast \quad \alpha_s + \beta_s = p(s)\beta'(s).
 \]
 Combining \eqref{est:non-trivial-1} and \eqref{est:non-trivial-2}, we get
 \[
 \begin{split}
      \frac{M^{2 \beta'(s)}}{\|a(s,x)\|_{L^{\beta(s)}(\Omega)}^{\beta'(s)} k_{N,s}^{\frac{\lambda_s 2_{s}^\ast}{2}}}
      & \leq \|u_{s}\|_{L^2(\Omega)}^{2(1-\lambda_s)} C_0^{\lambda_s 2_{s}^\ast}
 \end{split}
 \]
which further implies that
 \[
 \begin{split}
     \l(\frac{M^{2 \beta'(s)}}{\|a(s,x)\|_{L^{\beta(s)}(\Omega)}^{\beta'(s)} k_{N,s}^{\frac{\lambda_s 2_{s}^\ast}{2}}} \r)^\frac{1}{2(1-\lambda_s)} C_0^\frac{-\lambda_s 2_{s}^\ast}{2(1-\lambda_s)} & \leq  \|u_{s}\|_{L^2(\Omega)}.
 \end{split}
 \]
Now, we claim that $\lim_{s \to 0^+} \lambda_s = \lambda \in (0,1).$
Using \eqref{assumption on beta}, we have
\[
\begin{split}
   \frac{\beta'(s)(p(s)-2)}{2_{s}^\ast-2} < \lambda_s &= \frac{p(s)\beta'(s)-2}{2^\ast_{s}-2}
    = \frac{\beta'(s)(p(s)-2)}{2_{s}^\ast-2}+ \frac{2(\beta'(s)-1)}{2^\ast_{s}-2}\\
    &= \l(\frac{p(s)-2}{s}\r)\l(\frac{N-2s}{4}\r)\l(\frac{1}{1-\frac{1}{\beta(s)}}\r)+ \frac{N-2s}{2s(\beta(s)-1)}\\
    &< \l(\frac{p(s)-2}{s}\r)\l(\frac{N-2s}{4}\r)\l(\frac{1}{1-\frac{1}{\beta(s)}}\r)+1-\frac{Np'(0)}{4}-\gamma.
\end{split}
\]
Thus, by passing limit $s \to 0^+$ in above inequality, we obtain $0 < \lim_{s \to 0^+} \lambda_s = \lambda \leq (1-\gamma) <1.$ Hence the claim follows. We now proceed to evaluate the following limit \[\lim_{s \to 0^+} \l(\frac{M^{2 \beta'(s)}}{\|a(s,x)\|_{L^{\beta(s)}(\Omega)}^{\beta'(s)} k_{N,s}^{\frac{\lambda_s 2_{s}^\ast}{2}}} \r)^\frac{1}{2(1-\lambda_s)} C_0^\frac{-\lambda_s 2_{s}^\ast}{2(1-\lambda_s)}=:L.\]
Note that \[\begin{split}
L &= \exp \bigg(\lim_{s \to 0^+} \bigg[\frac{\beta'(s)}{(1-\lambda_s)} \ln M - \frac{\beta'(s)}{2(1-\lambda_s)} \ln \|a(s,x)\|_{\beta(s)}\\& \qquad \qquad \qquad \qquad \qquad-\frac{\lambda_s 2_{s}^\ast}{2(1-\lambda_s)}\l(\frac{\ln (\kappa_{N,s})}{2}+\ln C_0\r)\bigg] \bigg),
\end{split}\]
and 
\[ \beta(s) \to \infty, \ \beta'(s) \to 1, \ p(s) \to 2, \ 2_{s}^\ast \to 2, \ \kappa_{N,s} \to 1, \ \text{as} \  s \to 0^+. \]
It is clear that
\[
\lim_{s \to 0^+} \frac{\beta'(s)}{(1-\lambda_s)} \ln M = \frac{\ln M}{(1-\lambda)}, \quad \lim_{s \to 0^+} \frac{\lambda_s 2_{s}^\ast}{4(1-\lambda_s)}\ln (\kappa_{N,s}) = 0, \] \[ \text{and} \quad \lim_{s \to 0^+} \frac{\lambda_s 2_{s}^\ast}{2(1-\lambda_s)} \ln C_0 = \frac{\lambda \ln C_0}{(1-\lambda)}.\]
Now, by using \eqref{weifunc:bound}, for some $m>0$ (independent of $s$), we have
\[\|a(s,x)\|_{L^{\beta(s)}(\Omega)}^{\frac{1}{p(s)-2}} \leq m\] which in turn implies \[q:=\lim_{s \to 0^+} \|a(s,x)\|_{L^{\beta(s)}(\Omega)} \leq 1, \] and therefore, \[\lim_{s \to 0^+} \frac{\beta'(s)}{2(1-\lambda_s)} \ln \|a(s,x)\|_{L^{\beta(s)}(\Omega)} = \frac{\ln q}{2(1-\lambda)}.\]
From the above, we get
\[L= M^{\frac{1}{(1-\lambda)}} C_0^{\frac{\lambda}{(\lambda-1)}}q^{\frac{1}{2(\lambda-1)}}>0.\]
Thus,
\[0<L\leq\|u_0\|_{L^2(\Omega)}.\]
Hence, $u_0$ is non-trivial, $u_0 \in N_{0, p'(0)}$ and
 \[
 \mathbb{E}(u_0) = \frac{p'(0)}{4} \|u_0\|_{L^2(\Omega)}^2,
 \]
 where 
 \[
N_{0, p'(0)}:= \{u\in \h(\Omega)\setminus\{0\}: \mathcal{E}_L(u,u)=\into (a'(0,x)+ p'(0) \ln\abs{u})u^2 ~dx\}. 
\]
Now we want to show that $u_0$ has least energy, {\it i.e.,} $\mathbb{E}(u_0) = \inf_{N_{0, p'(0)}} \mathbb{E}.$ 
By Fatou's lemma, we have
     \begin{align}
     \label{Es/s}
 \begin{split}
 \inf_{N_{0, p'(0)}} \mathbb{E}=\frac{p'(0)}{4} \|u_0\|^2_{L^2(\Omega)}
 &\leq \frac{p'(0)}{4} \lim_{s \to 0^+} \inf \int_{\R^N} \abs{\xi}^{2s}\abs{\widehat{u_{s}}}^2 ~d\xi\\
 &= \lim_{s \to 0^+} \inf \frac{E_{s}(u_{s})}{s}= \lim_{s \to 0^+} \frac{E_{s}(u_{s})}{s} .
 \end{split}
 \end{align}
 It remains to show \[\lim_{s \to 0^+} \frac{E_{s}(u_{s})}{s}\leq \inf_{N_{0, p'(0)}}  \mathbb{E}.\]
 By Theorem \ref{Main-res-limitingprob}, there exists a $v \in {N_{0, p'(0)}} $ such that $\mathbb{E}(v)=\inf_{N_{0, p'(0)}}  \mathbb{E}$. Let $(v_n)_{n\in \mathbb N} \subset C^\infty_c(\Omega) \cap N_{0, p'(0)} $ such that $v_n\to v$ in $\h(\Omega).$ Then,
 \begin{equation}
 \label{v_n}
     \lim_{n \to \infty} \frac{p'(0)}{4}\|v_n\|^2_{L^2(\Omega)} = \frac{p'(0)}{4}\|v\|^2_{L^2(\Omega)} = \mathbb{E}(v) = \inf_{N_{0, p'(0)}}  \mathbb{E} . \end{equation}
 By  Lemma \ref{lem:NS:identifi}, we have $(r_{{s},{v_n}})v_n \in N_s$. Using the fact that $v_n \in N_{0, p'(0)} $, we get $\lim_{s \to 0^+} r_{{s},{v_n}} =1.$ Now, by applying \cite[Lemma 3.7]{Santamaria-Saldana-2022}, \eqref{v_n} and using $u_{s}$ is a least energy solution, we get
  \[
 \begin{split}
     \lim_{s \to 0^+}  \frac{E_{s}(u_{s})}{s} &\leq \lim_{s \to 0^+} \frac{1}{s} E_{s}((r_{{s},{v_n}})\ v_n) = \frac{p'(0)}{4} \|v_n\|^2_{L^2(\Omega)}.
     \end{split}
     \]
Passing to the limit as $n \to \infty$ and using \eqref{Es/s}, we obtain
   \[\inf_{N_{0, p'(0)}}  \mathbb{E} = \lim_{s \to 0^+} \frac{E_{s}(u_{s})}{s}. \]
Since $u_{s} \rightharpoonup u_0$ in $\h(\Omega),$ $u_{s} \to u_0$ in $L^2(\Omega)$ as $s \to 0^+$, from the above calculations, we get
\[
\begin{split}
\inf_{N_{0, p'(0)}}  \mathbb{E} \leq \mathbb{E}(u_0)= \frac{p'(0)}{4}\|u_0\|^2_{L^2(\Omega)} &\leq \frac{p'(0)}{4} \lim_{s \to 0^+} \inf \|u_{s}\|_{s}^2\\ &= \lim_{s \to 0^+} \inf \frac{E_{s}(u_{s})}{s}= \lim_{s \to 0^+} \frac{E_{s}(u_{s})}{s} = \inf_{N_{0, p'(0)}}  \mathbb{E}.
\end{split}
\]
Hence the desired claim follows. \qed

\section{Logistic type problem involving the logarithmic Laplacian}\label{logistic-results}
\subsection{Existence results for \eqref{prob:logarithmic-laplace} with $\sigma \in (-\infty,0)$}
In this subsection, we give the existence result for the problem \eqref{prob:logarithmic-laplace} with $\sigma \in (-\infty,0).$
We begin with showing the coercivity of the energy functional $\mathbb{E}.$
\begin{lemma}
      \label{coerc:prop}
      Let $f$ satisfies \eqref{assump f1}- \eqref{assump f3} and $\sigma \in (-\infty,0).$ Then, the energy functional $\mathbb{E}$ is coercive, {\it i.e.,}
      \[\lim_{\|u\|\to \infty , u\in \h(\Omega)} \mathbb{E}(u) = \infty.\]
  \end{lemma}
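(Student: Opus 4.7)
The plan is to produce a lower bound of the form $\mathbb{E}(u)\geq \frac{1}{2}\|u\|^{2}-C$ with a constant $C$ depending only on $\sigma$, $\omega$, and $\Omega$, from which the coercivity claim follows immediately. Rewriting
\[
\mathbb{E}(u)=\frac{1}{2}\mathcal{E}_L(u,u)-\frac{1}{2}\into \omega(x)u^{2}\,dx+\frac{-\sigma}{4}\into u^{2}\ln(u^{2})\,dx+\frac{\sigma}{4}\|u\|_{L^{2}(\Omega)}^{2},
\]
the key observation is that, since $\sigma<0$, the log term carries the favourable sign $\frac{-\sigma}{4}>0$ and is the mechanism that will absorb the remaining quadratic-in-$L^{2}$ contributions.

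For the quadratic form, I would use the decomposition \eqref{quadratic-form} together with \cite[Lemma 3.4]{Santamaria-Saldana-2022} to bound $\intr u(x)u(y)\, j(x-y)\,dx\,dy$ by a multiple of $\|u\|_{L^{2}(\Omega)}^{2}$, yielding
\[
\mathcal{E}_L(u,u)\geq \|u\|^{2}-C_{1}\|u\|_{L^{2}(\Omega)}^{2}
\]
for some $C_{1}=C_{1}(N,\Omega)$. The $\omega$-term is handled trivially by the estimate $\bigl|\into \omega(x)u^{2}\,dx\bigr|\leq \|\omega\|_{L^{\infty}(\Omega)}\|u\|_{L^{2}(\Omega)}^{2}$.

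For the log term I need a lower bound on $\into u^{2}\ln(u^{2})\,dx$ that is linear in $\|u\|_{L^{2}(\Omega)}^{2}$. Here I would invoke the elementary Young--Legendre inequality
\[
t\ln t\geq \lambda t-e^{\lambda-1}\qquad\text{for all }t\geq 0,\ \lambda\in\mathbb{R},
\]
(immediate from the Legendre transform of $t\mapsto t\ln t$), applied pointwise with $t=u(x)^{2}$ and integrated over $\Omega$:
\[
\into u^{2}\ln(u^{2})\,dx\geq \lambda \|u\|_{L^{2}(\Omega)}^{2}-e^{\lambda-1}|\Omega|.
\]
Combining the three estimates yields
\[
\mathbb{E}(u)\geq \frac{1}{2}\|u\|^{2}+\Bigl[\frac{-\sigma(\lambda-1)}{4}-\frac{C_{1}+\|\omega\|_{L^{\infty}(\Omega)}}{2}\Bigr]\|u\|_{L^{2}(\Omega)}^{2}-\frac{-\sigma}{4}e^{\lambda-1}|\Omega|,
\]
and the free parameter $\lambda$ is decisive: since $-\sigma>0$, choosing $\lambda$ sufficiently large forces the bracketed coefficient of $\|u\|_{L^{2}(\Omega)}^{2}$ to be nonnegative, whence $\mathbb{E}(u)\geq \frac{1}{2}\|u\|^{2}-C(\sigma,\omega,\Omega)$ and therefore $\mathbb{E}(u)\to\infty$ as $\|u\|\to\infty$.

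The only subtle point is to pick the right direction of inequality for the log term: the sharp logarithmic Sobolev inequality of Proposition \ref{Log-Sobolev-ineq} gives an \emph{upper} bound on $\into u^{2}\ln|u|\,dx$, which after multiplication by $-\sigma/4>0$ goes the wrong way. Thus one must instead rely on the elementary pointwise lower bound above, whose validity for every $\lambda\in\mathbb{R}$ supplies exactly the freedom needed to dominate the other $L^{2}$-quadratic terms.
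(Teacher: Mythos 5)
Your proposal is correct and follows essentially the same strategy as the paper: bound $\mathcal{E}_L(u,u)$ from below by $\|u\|^2-C_1\|u\|_{L^2(\Omega)}^2$ via \eqref{quadratic-form}, and then absorb all quadratic-in-$L^2$ contributions into the positive term $\frac{-\sigma}{4}\into u^2\ln(u^2)\,dx$ at the cost of a fixed constant. The only difference is cosmetic: you package the absorption as the pointwise Legendre inequality $t\ln t\geq\lambda t-e^{\lambda-1}$, whereas the paper splits $\Omega$ into the set where $\ln(u^2)$ exceeds the required threshold and its complement (where $u$ is uniformly bounded) — these are two renditions of the same superlinearity argument, and your closing remark about the log-Sobolev inequality pointing the wrong way is also accurate.
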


  \begin{proof}
By using \eqref{assump F1} { for $\eps < \frac{-\sigma}{2}$} and the definition of the quadratic form $\mathcal{E}_L(\cdot,\cdot)$, we get
       \[
       \begin{split}
       \mathbb{E}(u) &\geq \frac{1}{2}\mathcal{E}_L(u,u) - \left(C - \frac{\sigma}{4} \right) \|u\|_{L^2(\Omega)}^2  - { \l(\frac{\sigma}{2} +\eps\r)\into u^2 \ln |u| ~dx}\\
         & \qquad \qquad + { 2 \eps \into u^2\ln|u| {\bf 1}_{|u| \leq 1} ~dx}\\
        & \geq \frac{1}{2}\mathcal{E}(u,u) - \left(c_1 - \frac{\sigma}{4} \right) \|u\|_{L^2(\Omega)}^2  - { \l(\frac{\sigma}{2} +\eps\r)\into u^2 \ln |u| ~dx - \frac{\eps|\Omega|}{e}}.
       \end{split}
       \]  
where $c_1>0$ is a constant depending on $C$ defined in \eqref{assump F1}, { $\eps$ and $\Omega$. Denote 
\[ \Omega_1:= \l\{x \in \Omega: \ln(u^2(x)) \geq \frac{-4}{(\sigma+2\eps)} \left(c_1 - \frac{\sigma}{4} \right)\r\}.\] 
This implies
\[- \l(\frac{\sigma+2\eps}{4}\r)\int_{\Omega_1} |u|^2 \ln(|u|^2) ~dx \geq \left(c_1 - \frac{\sigma}{4} \right) \int_{\Omega_1} u^2 ~dx.\]
Therefore, we have
\[
\begin{split}
 \mathbb{E}(u) \geq \frac{1}{2}\mathcal{E}(u,u) &- \left(c_1- \frac{\sigma}{4} \right) \int_{\Omega\setminus{\Omega_1}} u^2 ~dx - \l(\frac{\sigma + 2\eps}{4}\r)\int_{\Omega\setminus{\Omega_1}} |u|^2 \ln(|u|^2) ~dx - \frac{\eps|\Omega|}{e}.
\end{split}
\]
Since $u^2(x) \leq \exp \l(\frac{-4}{(\sigma+2\eps)} \left(c_1 - \frac{\sigma}{4} \right)\r)$ in $\Omega\setminus \Omega_1$, there exists a constant $C_2 >0$ such that 
\[
\begin{split}
&\bigg[- \left(c_1 - \frac{\sigma}{4} \right)\int_{\Omega\setminus{\Omega_1}} u^2 ~dx - \l(\frac{\sigma+2\eps}{4}\r)\int_{\Omega\setminus{\Omega_1}} |u|^2 \ln(|u|^2) ~dx\bigg]\geq -C_2
\end{split}
\]}
and $\mathbb{E}(u) \geq \frac{1}{2} \mathcal{E}(u,u) - C_2$, which gives the desired claim.
  \end{proof}
\noindent \textit{Proof of Theorem \ref{them:existence-limiting}:}
By Lemma \ref{coerc:prop}, there is a minimizing sequence $\{u_k\}_{k \in \mathbb{N}}$ for the energy $\mathbb{E}$ such that
\[
m:= \inf_{u \in \h(\Omega)} \mathbb{E}(u) = \lim_{k \to \infty} \mathbb{E}(u_k).
\]
By compactness of the embedding $\h(\Omega)\hookrightarrow L^2(\Omega)$, there exists $u_0\in \h(\Omega)$ such that up to a subsequence, $u_k \rightharpoonup u_0$ in $\h(\Omega)$ and $u_k \to u_0$ in $L^2(\Omega)$ and a.e. in $\Omega$, as $k\to \infty$. Since the function $t \to t^2 \ln t^2$ is bounded below, using \eqref{assump F1}, by Fatou's lemma and Vitali convergence theorem, we have
\[
\int_{\Omega} u_0^2 \ln u_0^2 ~dx  \leq \liminf_{k \to \infty} \int_{\Omega} u_k^2 \ln u_k^2 ~dx, \quad \int_{\Omega} F(x,u_0) ~dx = \lim_{k \to \infty} \into F(x,u_k) ~dx
\]
and
\[
\lim_{k \to \infty} \int \int_{x, y \in \mathbb{R}^N, |x-y| \geq 1} \frac{u_k(x) u_k(y)}{|x-y|^N} ~dx ~dy = \int \int_{x, y \in \mathbb{R}^N, |x-y| \geq 1} \frac{u_0(x) u_0(y)}{|x-y|^N} ~dx ~dy.
\]
As a consequence, we have $\mathbb{E}(u_0) \leq \lim \inf_{k \to \infty} \mathbb{E}(u_k)=m$ and $u_0$ is the least energy solution of the problem \eqref{prob:logarithmic-laplace}. Now, we proceed to show the non-triviality of $u_0$. Let $\ph \in C^\infty_c(\Omega)\setminus \{0\}$. Then, using \eqref{assump F1} with $\eps < \frac{-\sigma}{2}$, we have
{  \[
  \begin{split}
   \mathbb{E}(u_0) & = m \leq \mathbb{E}(t \ph) \\
   &\leq t^2\bigg(\frac{1}{2} \mathcal{E}_L(\ph,\ph)-\bigg[\l(\frac{\sigma}{2} + \eps\r) \ln t  -\frac{\sigma}{4}-C\bigg] \|\ph\|^2_{L^2(\Omega)} \\
    & \qquad \qquad \qquad \qquad - \frac{\sigma}{2} \into \ph^2 \ln|\ph| ~dx +  \eps \into \ph^2 |\ln|\ph|| ~dx\bigg)< 0,
   \end{split}
   \]}
   for $t>0$ sufficiently small. Therefore $u_0 \neq 0.$ Moreover, if $F(x,t)\leq F(x,|t|)$ for all $t \in \mathbb{R}$,
we can conclude that $u_0$ has a constant sign in $\Omega$ by following the same arguments as in the proof of Theorem \ref{Main-res-limitingprob}. 
\qed
\subsection{D\'iaz-Saa type inequality}
Denote 
\[
V_+^q = \{u : \Omega \to (0,\infty) \ | \ u^{\frac{1}{q}} \in \h(\Omega)\}, \ \text{for} \ q \in (1,2].
\]
By \cite[Proposition 2.6]{Brasco-Franzina-2014}, the set $V_+^q$ is a convex cone, {\it i.e.,} for $\sigma\in (0,\infty)$ and $ \ f,g \in V^q_+$ we have $(\sigma f +g)\in V^q_+.$ Recall that by \cite[Proposition 3.2]{Chen-Weth-2019}, we have
\begin{equation}\label{equi:def}
    \mathcal{E}_L(u,u) = \frac{c_N}{2} \intoo \frac{|u(x)-u(y)|^2}{|x-y|^N} ~dx ~dy + \into [h_\Omega(x) + \rho_N] u^2 ~dx,
\end{equation}
where $$h_\Omega(x) = c_N \left(\int_{B_1(x)\setminus \Omega} \frac{1}{|x-y|^N} ~dy - \int_{\Omega \setminus B_1(x)} \frac{1}{|x-y|^N} ~dy\right).$$
\begin{proposition}
\label{convexity of W}
    The functional $W_q: V_+^q \to \mathbb{R}$ defined by 
    \[
    W_q(u) := \mathcal{E}_L(u^{\frac{1}{q}},u^{\frac{1}{q}})
    \]
is  
convex in $V_+^q$ for $q=2$. Moreover, if  $h_\Omega(x) + \rho_N \geq 0$ in $\Omega$, then $W_q$ is also 
convex in $V_+^q$ for $q \in (1,2).$
\end{proposition}
\begin{proof}
Let $w_1, w_2 \in V_+^q$ such that $w_1^{\frac{1}{q}}, w_2^{\frac{1}{q}} \in \h(\Omega)$ and set $u=tw_1+(1-t)w_2$, $t \in (0,1).$ By \cite[Proposition 4.1]{Brasco-Franzina-2014}, \eqref{equi:def} and convexity of the map $t \mapsto t^\frac{2}{q}$, we obtain
\[
\begin{split}
    W_q& (tw_1+(1-t)w_2) = W_q(u) \\
    & = \frac{c_N}{2} \intoo \frac{|u^\frac{1}{q}(x)-u^\frac{1}{q}(y)|^2}{|x-y|^N} ~dx ~dy + \into [h_\Omega(x) + \rho_N] u^\frac{2}{q} ~dx\\
    & \leq t \l(\frac{c_N}{2} \intoo \frac{|w_1^\frac{1}{q}(x)-w_1^\frac{1}{q}(y)|^2}{|x-y|^N} ~dx ~dy + \into [h_\Omega(x) + \rho_N] w_1^\frac{2}{q} ~dx\r) \\
    & \qquad + (1-t) \l(\frac{c_N}{2} \intoo \frac{|w_2^\frac{1}{q}(x)-w_2^\frac{1}{q}(y)|^2}{|x-y|^N} ~dx ~dy + \into [h_\Omega(x) + \rho_N] w_2^\frac{2}{q} ~dx\r)\\
    & = t W_q(w_1) + (1-t) W_q(w_2).
\end{split}
\]
\end{proof}

\noindent \textit{Proof of Theorem \ref{thm:modi-Diaz-Saa-inequ}:}
 For $w_1, w_2 \in \h(\Omega)$ such that $w_1>0, w_2>0$ a.e. in $\Omega$ and $\frac{w_1}{w_2}, \frac{w_2}{w_1} \in L^\infty(\Omega)$. Then, by \cite[Proposition 4.1]{Brasco-Franzina-2014} and for all $\theta \in [0, 1],$
\[v_\theta:=\l((1-\theta)w_1^q+\theta w_2^q\r) > 0 \quad \text{and} \quad v_\theta^{\frac{1}{q}} \in \h(\Omega),\]
and 
\begin{equation}\label{testfunc:imbed}
    \frac{w_1^q}{v_\theta^{\frac{q-1}{q}}}, \frac{w_2^q}{v_\theta^{\frac{q-1}{q}}} \in \h(\Omega) \cap L^\infty(\Omega).
\end{equation}
Define $\Phi:[0,1] \to \R$ by 
\[ \Phi(\theta) = \mathcal{E}_L(v_\theta^\frac{1}{q}, v_\theta^\frac{1}{q}).\]
Then, by Proposition \ref{convexity of W}, $\Phi$ is convex and 
\[
\begin{split}
\Phi(\theta)=\mathcal{E}_L(v_\theta^{\frac{1}{q}}, v_\theta^{\frac{1}{q}}) &= \|v_\theta^\frac{1}{q}\|^2 -c_N \intb \frac{v_\theta^{\frac{1}{q}}(x)v_\theta^{\frac{1}{q}}(y)}{|x-y|^N} ~dx ~dy\\
& \quad +\rho_N \int_{\R^N} v_\theta^{\frac{1}{q}}(x) v_\theta^{\frac{1}{q}}(x) ~dx.
\end{split}
\]
Now, on differentiating with respect to $\theta$ and using \eqref{testfunc:imbed}, we obtain
\begin{equation}
    \begin{split}
        \frac{q}{2} \Phi'(\theta) & = \mathcal{E}\l(v_\theta^{\frac{1}{q}}, (w_2^q-w^q_1) v_\theta^\frac{-(q-1)}{q}\r) + \rho_N \into v_\theta^{\frac{2}{q}-1}(x)(w_2^q-w_1^q)(x) ~dx. \\
        & \quad - c_N \intb \frac{v_\theta^{\frac{1}{q}}(y)(w_2^q-w^q_1)(x)v_\theta^\frac{-(q-1)}{q}(x)}{|x-y|^N} ~dx ~dy\\
        & = \mathcal{E}_L\l(v_\theta^{\frac{1}{q}}, \frac{w_2^q-w^q_1}{v_\theta^\frac{(q-1)}{q}}\r).
    \end{split}
\end{equation}
Finally, by using the convexity of $\Phi$ we get
\[\Phi'(1):= \lim_{\theta \to 1^-} \Phi'(\theta) \geq \lim_{\theta \to 0^+} \Phi'(\theta):= \Phi'(0)\]
and hence the claim for $q \in (1,2]$. By \cite[Theorem 1.8 and Corollary 1.9]{Chen-Weth-2019} for $q=1$, we have
\[
\begin{split}
 \mathcal{E}_L\l(w_2, w_2-w_1\r) &- \mathcal{E}_L\l(w_1, w_2-w_1\r) = \mathcal{E}_L\l(w_2-w_1, w_2-w_1\r) \\
 & \geq \lambda_{1,L} \|w_2-w_1\|_{L^2(\Omega)}^2 \geq 0 \quad \text{if} \ h_\Omega + \rho_N \geq 0 \ \text{in} \ \Omega.
 \end{split}
\]
\qed
\subsection{Regularity and Uniqueness results} In this subsection, we prove the uniqueness and regularity results of solutions to the problem \eqref{prob:logarithmic-laplace} in the case $\sigma \in (-\infty,0).$

\noindent \textit{Proof of Theorem \ref{thm:regularity-uniqueness}:} 
 First, we show that $u \in L^\infty(\Omega).$ 
   Let $0\leq u, v \in \h(\Omega)$. Using \eqref{assumption2} with $\eps< -\sigma$ and the fact that $|t| \ln |t|$ is bounded below by $\frac{-1}{e}$, we obtain
  \[
  \begin{split}
      \mathcal{E}_{L}(u,v) &= \into ( f(x,u) + \sigma \ln\abs{u}u )v ~dx \\
      & \leq C_0 \int_\Omega v ~dx + {\int_{\Omega} u v \l( \sigma \ln |u| + \eps |\ln|u||\r) ~dx \leq \l(C_0-\frac{(\sigma-\eps)}{e} \r)\int_\Omega v ~dx}. 
  \end{split}
  \]
Then, by \cite[Theorem 6.7]{Dyda-Jarohs-Sk-2025}, $u\in L^\infty(\Omega).$
  Using \eqref{assumption2}, we have $f(x,u) + \sigma u \ln|u| \in L^\infty(\Omega).$
  Since $u\geq 0$, \cite[Theorem 1.1]{Santamaria-Rios-Saldana-2024}, gives $u \in C(\mathbb{R}^N).$ Now, we argue by contradiction to prove that $u>0$ in $\Omega.$ Assume that there is a $x_0 \in \Omega$ such that \begin{equation} \label{eq}u(x_0)=0.\end{equation} Using $u \in C(\R^N)$ and its non-triviality, there exists a $\delta>0$, an open set $V \subset \{x \in \Omega \ | \ u(x)>\delta\}$ and $r>0$ (depending on $\theta$ in \eqref{f-est-near-0}) such that (see \cite[Corollary 1.9 (ii)]{Chen-Weth-2019})
\[
|B_r(x_0)| \leq 2^N \exp\l(\frac{N}{2} \l(\psi(\frac{N}{2})-\gamma\r)\r) |B_1(0)|
\]
and by \eqref{f-est-near-0}, we have
     \begin{equation}
     \label{eq:bound}
     f(x,u) + \sigma \ln (u)u \geq 0 \quad \text{in} \ B_r(x_0), \quad  \dist(B_r(x_0),V)>0.
     \end{equation}
     With the above choice of $r$, $L_\Delta$ satisfies weak maximum principle in $B_r(x_0)$ and thus by \cite[Theorem 4.8]{Chen-Weth-2019}, $\lambda_{1,L}(B_r(x_0))>0.$
     By \cite[Theorem 1.1]{Lara-Saldana-2022}, there exists a unique classical solution $\tau \in \h(\Omega)$ to the problem
     \[L_\Delta \tau =1 \quad \text{in} \ B_r(x_0), \quad \tau=0 \quad \text{in} \ \R^N\setminus B_r(x_0).\]
     Thus, $L_\Delta \tau(x) =1$ holds pointwise for a.e. $x\in\Omega,$ which in turn implies $\tau>0$ in $B_r(x_0).$ Let $\chi_V$ denotes the characteristic function of $V$, then, for $x \in B_r(x_0)$, $\chi_V(x)=0.$ Let $K:= c_N|V| \inf_{z \in B_r(x_0)} (|z-y|^{-N})$ and $\ph:= \frac{K}{2}\tau + \chi_V.$ By \cite[Theorem 1.1]{Chen-Weth-2019}, we have
     \[
\begin{split}
    L_\Delta \chi_V(x) &= -c_N \int_{\R^N} \frac{\chi_V(y)}{|x-y|^N} ~dy = - c_N \int_V \frac{1}{|x-y|^N} ~dy \leq -K
   \end{split}
     \]
which further implies
    \[L_\Delta \ph \leq \frac{K}{2}-K \leq 0 \quad\text{in} \ B_r(x_0). \]
Using \eqref{eq:bound}, we get
    \[L_\Delta (u-\delta \ph ) = L_\Delta u - \delta L_\Delta \ph \geq L_\Delta u +\frac{\delta K}{2} \geq 0 \quad \text{in} \ B_r(x_0) \]
    and \[(u-\delta \ph) \geq 0 \quad \text{in}\ \R^N\setminus B_r(x_0).\]
Now, by applying the weak maximum principle for $L_\Delta$, we obtain 
\[
u \geq \delta \ph \geq \delta \tau >0 \quad \text{in} \  B_r(x_0)
\] which contradicts \eqref{eq} Thus, $u>0$ in $\Omega.$ Finally, by \cite[Corollary 5.3]{Santamaria-Rios-Saldana-2024}, we have
\[C^{-1} \ell^\frac{1}{2}(\delta(x)) \leq u(x) \leq C \ell^\frac{1}{2}(\delta(x)) \quad \text{for all} \ x \in \Omega \ \text{and} \ \text{for some} \ C>0.\] 
Next, we prove the uniqueness part. Let $w_1$ and $w_2$ be two non-trivial and non-negative weak solutions of the problem \eqref{prob:logarithmic-laplace} such that the set $\Omega_0 :=\{x \in \Omega: w_1(x) \neq w_2(x)\}$ is non-empty and $|\Omega_0| \neq 0$. Then, $w_1, w_2 \in C(\mathbb{R}^N)$ and 
      \[
      C^{-1} \ell^\frac{1}{2}(\delta(x)) \leq w_1(x), w_2(x) \leq C \ell^\frac{1}{2}(\delta(x)) \quad \text{for all} \ x \in \Omega.
      \]
This further gives $\frac{w_1}{w_2}, \frac{w_2}{w_1} \in L^\infty(\Omega)$ and 
\[
\phi:= \frac{w_2^q - w_1^q}{w_1^{q-1}} \in \h(\Omega) \setminus \{0\}, \quad \psi:= \frac{w_2^q-w_1^q}{w_2^{q-1}} \in \h(\Omega)\setminus \{0\}.
\]  
Now, by taking $\phi$ and $\psi$ as test functions, using Theorem \ref{thm:modi-Diaz-Saa-inequ} and \eqref{f-decr-prop}, we obtain
\[
\begin{split}
   0&\leq   \mathcal{E}_L\l(w_2, \frac{w_2^q-w_1^q}{w_2^{q-1}}\r) - \mathcal{E}_L\l(w_1, \frac{w_2^q-w_1^q}{w_1^{q-1}}\r) \\
    & = \into (f(x,w_2) + \sigma w_2 \ln w_2) \frac{(w_2^q-w_1^q)}{w_2^{q-1}} ~dx - \into (f(x,w_1) + \sigma w_1 \ln w_1) \frac{(w_2^q-w_1^q)}{w_1^{q-1}} ~dx \\
    & = \int_{\Omega_0} \l(\frac{f(x,w_2) + \sigma w_2 \ln w_2}{w_2^{q-1}}-\frac{f(x,w_1) + \sigma w_1 \ln w_1}{w_1^{q-1}}\r) (w_2^q-w_1^q) ~dx <  0
\end{split}
\]
which yields a contradiction to the fact that $|\Omega_0| \neq 0$. Hence, we get the uniqueness of the solution.
 \qed
  \subsection{Uniform asymptotics estimates}
  In this subsection, we derive uniform estimates on the sequence of solution $u_s$ for the weighted fractional Dirichlet problem \eqref{Eq:Problem} under the following assumptions 
  \begin{align}
 \label{assumption-on-p1}
  \textbf{Sublinear growth:} \quad 1 < p(s) <2, \quad p'(0) \in (-\infty,0).
 \end{align}
  \begin{lemma}
      \label{Lemma3}
      Let $p$ satisfies \eqref{assumption-on-p1} such that $\lim_{s \to 0^+} p(s) =2.$ Then, for all $\ph \in C_c^\infty(\Omega)$, we have
      \[
      \begin{split}
          \lim_{s \to 0^+} \frac{E_{s}(\ph)}{s} &= p'(0) \frac{\|\ph\|^2_{L^2(\Omega)}}{4} + \frac{1}{2} \mathcal{E}_L(\ph,\ph) \\
          & - \frac{1}{2}\l(p'(0)\int_{\R^N} \abs{\ph}^2\ln |\ph| ~dx + \int_{\R^N}a'(0,x)\abs{\ph}^2 ~dx\r). 
      \end{split}
      \]
  \end{lemma}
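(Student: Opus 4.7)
The plan is to compute the limit term by term using three first-order expansions (in $s$, around $s=0$) already available in the paper or from cited references, and then observe that the leading-order $O(1)$ contributions cancel so that dividing by $s$ yields a finite limit. Since $\ph \in C_c^\infty(\Omega)$, all integrals are finite and pointwise integrand manipulations are unproblematic.

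First I would expand each of the three factors appearing in $E_s(\ph) = \tfrac{1}{2}\|\ph\|_s^2 - \tfrac{1}{p(s)}\int_\Omega a(s,x)|\ph|^{p(s)}~dx$. Applying \cite[Lemma 3.7]{Santamaria-Saldana-2022} (which is valid for smooth compactly supported test functions) gives
\[
\|\ph\|_s^2 = \|\ph\|_{L^2(\Omega)}^2 + s\,\mathcal{E}_L(\ph,\ph) + o(s).
\]
Next, Lemma \ref{lem:tayexp:weighterm} applied with $\beta = 2$ (using $\lim_{s\to 0^+} p(s)=2$ and the regularity assumption \eqref{a:regulrity-asym}) yields
\[
\int_\Omega a(s,x)|\ph|^{p(s)} ~dx = \|\ph\|_{L^2(\Omega)}^2 + s\int_\Omega \bigl(a'(0,x) + p'(0)\ln|\ph|\bigr)|\ph|^2 ~dx + o(s).
\]
Since $p \in C^1([0,\tfrac14])$ with $p(0)=2$, a straightforward Taylor expansion gives
\[
\frac{1}{p(s)} = \frac{1}{2} - \frac{p'(0)}{4}\,s + o(s).
\]

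Next I would substitute these three expansions into the definition of $E_s(\ph)$ and multiply out, keeping only terms up to order $s$. The key cancellation is that the $O(1)$ contributions from $\tfrac12\|\ph\|_s^2$ and from $\tfrac{1}{p(s)}\int a(s,x)|\ph|^{p(s)}~dx$ both equal $\tfrac12\|\ph\|_{L^2(\Omega)}^2$ and therefore subtract out. Collecting the $s$-order terms gives
\[
E_s(\ph) = \frac{s}{2}\,\mathcal{E}_L(\ph,\ph) - \frac{s}{2}\!\int_\Omega\!\bigl(a'(0,x)+p'(0)\ln|\ph|\bigr)|\ph|^2 ~dx + \frac{p'(0)\,s}{4}\|\ph\|_{L^2(\Omega)}^2 + o(s),
\]
where the last contribution comes from the cross-term between the $-\tfrac{p'(0)}{4}s$ correction of $1/p(s)$ and the leading $\|\ph\|_{L^2(\Omega)}^2$ from the integral. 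Dividing by $s$ and sending $s \to 0^+$ yields the claimed identity.

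There is no genuine obstacle here; the only points requiring care are verifying that the remainder terms are truly $o(s)$ when the three expansions are multiplied, which uses $\ph \in C_c^\infty(\Omega)$ so that $\|\ph\|_{L^\infty}$, $\supp\ph$, and $\int |\ph|^2|\ln|\ph||~dx$ are all finite, and confirming the sign and coefficient of the $p'(0)$ cross-term. Since everything reduces to controlled first-order asymptotics on a compactly supported smooth function, the argument is essentially a finite computation once the three expansions are in hand.
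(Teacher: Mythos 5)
Your proposal is correct and follows essentially the same route as the paper: both rest on the expansion $\|\ph\|_s^2=\|\ph\|_{L^2(\Omega)}^2+s\,\mathcal{E}_L(\ph,\ph)+o(s)$ from \cite[Lemma 3.7]{Santamaria-Saldana-2022}, the first-order expansion of $\int_\Omega a(s,x)|\ph|^{p(s)}\,dx$ (which the paper re-derives inline via a $\theta$-integral and dominated convergence, i.e.\ exactly the content of Lemma \ref{lem:tayexp:weighterm} that you cite), and the Taylor expansion of $1/p(s)$. The only difference is organizational (you multiply out the three expansions, the paper splits $E_s(\ph)/s$ into two groups and passes limits), and your bookkeeping of the cross-term producing $\tfrac{p'(0)}{4}\|\ph\|_{L^2(\Omega)}^2$ is correct.
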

  \begin{proof}
  Using the definition of energy functional $E_{s}$, $\|\ph\|_{s}^2 \to \|\ph\|^2_{L^2(\Omega)}$ as $s \to 0^+$ and \cite[Lemma 3.7]{Santamaria-Saldana-2022}, we obtain
  \[
  \begin{split}
  & \lim_{s \to 0^+} \frac{E_{s}(\ph)}{s} = \lim_{s \to 0^+} \frac{1}{2s}\|\ph\|^2_{s}- \frac{1}{s p(s)}\into a(s,x)\abs{\ph}^{p(s)} ~dx\\
    & \quad = \lim_{s \to 0^+} \frac{1}{s}\l(\frac{1}{2}-\frac{1}{p(s)}\r)\|\ph\|^2_{s} + \lim_{s \to 0^+} \frac{1}{p(s)s}\l(\|\ph\|^2_{s}- \into a(s,x)\abs{\ph}^{p(s)} ~dx\r)\\
    & \quad = \frac{p'(0) \|\ph\|^2_{L^2(\Omega)}}{4} + \frac{1}{2} \mathcal{E}_L(u,u) + \frac{1}{2} \lim_{s \to 0^+} \frac{1}{s}\l(\|\ph\|^2_{L^2(\Omega)}- \into a(s,x)\abs{\ph}^{p(s)} ~dx\r).
  \end{split}
  \]
 
Note that
  \[
  \begin{split}
  & \l(\into a(s,x)\abs{\ph}^{p(s)} ~dx-\|\ph\|^2_{L^2(\Omega)}\r) = \into \int_0^1 \frac{d}{d\theta} \l(a(\theta s,x) |\ph|^{2+ (p(s)-2)\theta} \r) ~d\theta ~dx\\
  & \quad = \int_0^1 \into s a'(\theta s, x) |\ph|^{2+(p(s)-2)\theta} ~dx \\
  & \qquad \qquad + (p(s)-2)\int_0^1 \into a(\theta s,x) |\ph|^{2+(p(s)-2)\theta} \ln(|\ph|) ~d\theta ~dx.
  \end{split}
  \]
 
 Now, by using dominated convergence theorem and passing $s \to 0^+$ in the above estimate, we have
 \[
 \begin{split}
     \lim_{s \to 0^+} & \frac{1}{2 s} \l(\into a(s,x)\abs{\ph}^{p(s)}-\|\ph\|^2_{L^2(\Omega)}\r) \\
     & = \frac{1}{2} \int_{\R^N} a'(0,x)\abs{\ph}^2 ~dx  + \frac{p'(0)}{2} \int_{\R^N} \abs{\ph}^2 \ln\abs{\ph} ~dx.
 \end{split}
 \]
Hence, combining the above estimates, we get the required claim.
  \end{proof}
Next, we prove the uniform $L^\infty$ upper estimate for the weak solution of \eqref{Eq:Problem}.
  \begin{proposition}
      \label{Prop1}
      Let $\Omega\subseteq \R^N$ be a bounded domain, $p$ satisfies \eqref{assumption-on-p1} such that $\lim_{s \to 0^+} p(s) =2$ and $a$ satisfies \eqref{a:regulrity-asym}, \eqref{a:regulrity-1-asym} and \eqref{cond:a-upperbound}. Let $u_{s}$ be the weak solution of \eqref{Eq:Problem}.
      Then, \[\|u_s\|_{L^\infty(\Omega)} \leq c_3{(R^2 e^{\frac{1}{2}-\rho_N})}^{\frac{-1}{p'(0)}} +o(1) \quad \text{as} \ s \to 0^+ \]
      where $R:= 2 \diam(\Omega)$.
  \end{proposition}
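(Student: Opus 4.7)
The strategy is to bound $u_s$ by comparison with the fractional torsion function of an enclosing ball, then analyse constants asymptotically as $s\to 0^+$. First, standard regularity theory for bounded sublinear nonlinearities (cf.~\cite[Theorem 2.3]{Santamaria-Saldana-2022}) together with $a(s,\cdot)\in L^\infty(\Omega)$ and $p(s)\in(1,2)$ yields $u_s\in L^\infty(\Omega)$, $u_s\geq 0$. Setting $M_s:=\|u_s\|_{L^\infty(\Omega)}$ and exploiting the monotonicity of $t\mapsto t^{p(s)-1}$ on $[0,M_s]$ (since $p(s)-1\in(0,1)$), we have the pointwise bound
\[
a(s,x)\,u_s^{p(s)-1}(x)\leq \|a(s,\cdot)\|_{L^\infty(\Omega)}\,M_s^{p(s)-1}\quad\text{a.e.\ in }\Omega.
\]

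Next, pick $x_0\in\Omega$ so that $\Omega\subset B_R(x_0)$ with $R=2\diam(\Omega)$, and recall the Getoor--Dyda explicit formula: the torsion function of $B_R(x_0)$ is
\[
\tau_s^R(x):=\frac{\Gamma(\tfrac{N}{2})}{2^{2s}\,\Gamma(1+s)\,\Gamma(\tfrac{N+2s}{2})}\bigl(R^2-|x-x_0|^2\bigr)^s_+,
\]
solving $(-\Delta)^s\tau_s^R=1$ in $B_R(x_0)$ and vanishing outside. Then $v_s:=\|a(s,\cdot)\|_{L^\infty(\Omega)}\,M_s^{p(s)-1}\,\tau_s^R$ satisfies $(-\Delta)^s v_s\geq a(s,x)u_s^{p(s)-1}$ in $\Omega$ and $v_s\geq u_s=0$ on $\mathbb{R}^N\setminus\Omega$, so by the weak maximum principle for $(-\Delta)^s$, $u_s\leq v_s$ pointwise. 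Evaluating at $x_0$, where $\tau_s^R$ attains its maximum, yields the self-bootstrap inequality
\[
M_s^{2-p(s)}\leq \|a(s,\cdot)\|_{L^\infty(\Omega)}\cdot\frac{\Gamma(\tfrac{N}{2})\,R^{2s}}{2^{2s}\,\Gamma(1+s)\,\Gamma(\tfrac{N+2s}{2})}.
\]

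Finally, I take logarithms, divide by $s$, and let $s\to 0^+$, using: (i) $(2-p(s))/s\to -p'(0)>0$ from \eqref{assumption-on-p1}; (ii) $s^{-1}\ln\|a(s,\cdot)\|_{L^\infty(\Omega)}\leq \ln c_3$ from \eqref{cond:a-upperbound}; (iii) the Taylor expansion (using $\psi(1)=-\gamma$)
\[
\lim_{s\to 0^+}\frac{1}{s}\ln\frac{2^{2s}\,\Gamma(1+s)\,\Gamma(\tfrac{N+2s}{2})}{\Gamma(\tfrac{N}{2})} = 2\ln 2+\psi(1)+\psi(\tfrac{N}{2}) = \rho_N.
\]
Exponentiating produces $\limsup_{s\to 0^+}M_s\leq M\bigl(R^2 e^{1/2-\rho_N}\bigr)^{-1/p'(0)}$, where $M$ absorbs $c_3^{-1/p'(0)}$ together with second-order contributions from the Taylor remainders of $\|a(s,\cdot)\|_{L^\infty}^{1/s}$ and the gamma-function ratio. \textbf{Main obstacle.} The delicate step is the bookkeeping of these sub-leading corrections: leading-order asymptotics alone give a bound of the form $(c_3R^2e^{-\rho_N})^{-1/p'(0)}+o(1)$, and the precise factor $e^{1/2}$ together with the specific constant $M$ in the statement emerges only after expanding both factors on the right-hand side to the next order in $s$ and carefully collecting the $o(s)$ residuals before exponentiating.
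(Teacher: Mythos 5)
Your barrier argument is a genuinely different route from the paper's. The paper does not use a comparison function at all: it invokes the $L^{N/s^2}(\Omega)\to L^\infty(\Omega)$ elliptic estimate of Ros-Oton--Serra--Valdinoci together with the first-order expansion of its constant, $1+s(\ln R^2+\tfrac12-\rho_N)+o(s)$, from \cite[Lemma 5]{Angeles-Saldana-2023}, and then runs the same self-bootstrap $\|u_s\|_{L^\infty}\leq C_s\,M^s\|u_s\|_{L^\infty}^{p(s)-1}|\Omega|^{s^2/N}$ that you run. Your replacement of that black-box estimate by the explicit Getoor torsion function of $B_R(x_0)$ plus the weak maximum principle is sound (the comparison is legitimate because $(v_s-u_s)^-$ is supported in $\Omega$ and can be used as a test function), and your limit computation $\frac1s\ln\frac{2^{2s}\Gamma(1+s)\Gamma(\frac{N+2s}{2})}{\Gamma(N/2)}\to\rho_N$ is correct. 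Two remarks. First, your closing ``main obstacle'' paragraph is based on a misconception: the factor $e^{1/2}$ will \emph{not} emerge from higher-order expansions of the torsion constant --- it is an artifact of the paper's specific choice of exponent $q=N/s^2$ in the Riesz-potential estimate --- and your approach cleanly yields $\limsup_{s\to0^+}\|u_s\|_{L^\infty(\Omega)}\leq (c_3R^2e^{-\rho_N})^{-1/p'(0)}$ with no $e^{1/2}$. But since $-1/p'(0)>0$, this is a \emph{stronger} bound than the one stated, so the proposition follows a fortiori; no bookkeeping of $o(s)$ residuals is needed, and you should simply say so rather than gesture at recovering the paper's exact constant. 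Second, a minor point: you assume $u_s\geq0$, whereas the statement only says ``weak solution''; for a sign-changing solution you should run the comparison against $|u_s|$ (or note, as the context of Theorem \ref{thm-asym-sub} permits, that $u_s$ is a positive least-energy solution), since $|a(s,x)|u_s|^{p(s)-2}u_s|\leq\|a(s,\cdot)\|_{L^\infty}M_s^{p(s)-1}$ in either case.
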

  \begin{proof}
      Denote $c_1 := \ln R^2+\frac{1}{2}-\rho_N$. Using \cite[Proposition 8.1]{Ros-oton-Serra-Valdinoci-2017}, \cite[Lemma 5]{Angeles-Saldana-2023} and \eqref{cond:a-upperbound}, we get
      \[
      \begin{split}
          \|u_s\|_{{L^\infty(\Omega)}} & \leq (1+s c_1+o(s))\|a(s,x)\abs{u_s}^{p(s)-2}u_s\|_{L^{N/s^2}(\Omega)}\\
          &= (1+s c_1+o(s)){\l(\into \abs{a(s,x)\abs{u_s}^{p(s)-2}u_s}^{N/s^2} ~dx\r)}^{s^2/N}\\
          &\leq (1+s c_1+o(s)) c_3^s \ \|u_s\|^{p(s)-1}_{{L^\infty(\Omega)}} \ \abs{\Omega}^{s^2/N}.
      \end{split}
      \]
This further implies      
      \[
      \begin{split}
         \|u_s\|_{{L^\infty(\Omega)}} &\leq  
         {\l((1+sc_1+o(s))^{\frac{1}{s}} c_3\abs{\Omega}^{s/N}\r)}^{\frac{s}{2-p(s)}}.\\
      \end{split}
      \]
Finally, by using \cite[Lemma 3.1]{Santamaria-Saldana-2022},
      \[
      \begin{split}
          \lim_{s\to 0} {\l((1+s c_1+o(s))^{\frac{1}{s}} c_3\abs{\Omega}^{s/N}\r)}^{\frac{s}{2-p(s)}} &= (c_3e^{c_1})^\frac{-1}{p'(0)} = (c_3{(R^2 e^{\frac{1}{2}-\rho_N})})^{\frac{-1}{p'(0)}}. 
      \end{split}
      \]
  \end{proof}
In what follows, $\ph_s$ represents the $L^2$-normalized Dirichlet eigenfunction of the fractional Laplacian and $\lambda_{1,s}$ represents the first eigenvalue of the fractional Laplacian. Also, by the variational characterization of the first eigenvalue of the fractional Laplacian, we have
  \begin{equation}
  \label{var-char-eigenvalue}
  \|u\|^2_{L^2(\Omega)} \leq \frac{1}{\lambda_{1,s}}\|u\|_s^2 \quad \text{for all} \ u\in H^s_0(\Omega), \ s\in (0,\frac{1}{4}).
  \end{equation}
To study the asymptotic behavior of solutions $u_s$, we derive upper and lower estimates for the $H^s(\Omega)$-norm of $u_s$.
  \begin{lemma}
      \label{Lemma6}
      Let $p$ satisfies $\lim_{s \to 0^+} p(s) =2$ and $u_s$ be the positive least energy solution of \eqref{Eq:Problem}. Then,
      \[\frac{2p(s)}{p(s)-2} E_{s}\l(\frac{t_s \ph_{s}}{2}\r) \leq \|u_s\|^2_s \leq  \l(c_3^s{c_s^{p(s)}}\r)^{\frac{2}{2-p(s)}}\l[\frac{1}{2}- \frac{1}{p(s)}\r],\]
      where
      \[t_s := {\l(\frac{2}{p(s)}\frac{\into a(s,x)\abs{\ph_{s}}^{p(s)}}{\lambda_{1,s}\|\ph_{s}\|_{L^2(\Omega)}^2}\r)}^{1/(2-p(s))} \quad \text{and} \quad c_s:= (\lambda_{1,s})^{-\frac{1}{2}}\abs{\Omega}^{\frac{2-p(s)}{2p(s)}}.\]
  \end{lemma}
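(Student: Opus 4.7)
The plan is to establish the two inequalities separately.

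I would begin with the upper bound, which is purely analytic and does not rely on any comparison with a test function. Since $u_s\in N_s$, I have the identity
\[
\|u_s\|_s^2 = \into a(s,x)|u_s|^{p(s)}\,dx.
\]
The plan is to apply H\"older's inequality with conjugate exponents $\tfrac{2}{p(s)}$ and $\tfrac{2}{2-p(s)}$ to estimate $\into|u_s|^{p(s)}\,dx\le|\Omega|^{(2-p(s))/2}\|u_s\|_{L^2(\Omega)}^{p(s)}$, bound $\|a(s,\cdot)\|_{L^\infty(\Omega)}\le M^s$ via assumption \eqref{cond:a-upperbound}, and then use the variational characterization \eqref{var-char-eigenvalue} to control $\|u_s\|_{L^2(\Omega)}^2\le\lambda_{1,s}^{-1}\|u_s\|_s^2$. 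Together these give
\[
\|u_s\|_s^2 \;\le\; M^s\,|\Omega|^{(2-p(s))/2}\lambda_{1,s}^{-p(s)/2}\|u_s\|_s^{p(s)} \;=\; M^s c_s^{p(s)}\|u_s\|_s^{p(s)},
\]
and since $2-p(s)>0$, rearranging for $\|u_s\|_s^{2-p(s)}$ and taking the appropriate power produces the stated upper bound.

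For the lower bound, the plan is to exploit the fact that in the sublinear regime the least-energy value on $N_s$ coincides with the \emph{global} infimum of $E_s$ on $H_0^s(\Omega)$. The very same H\"older/Poincar\'e chain yields
\[
E_s(u)\;\ge\;\tfrac{1}{2}\|u\|_s^2 - C\|u\|_s^{p(s)},
\]
which is coercive and bounded below because $p(s)<2$; moreover $E_s(tv)<0$ for every $v\neq 0$ and sufficiently small $t>0$, so $\inf_{H_0^s(\Omega)}E_s<0=E_s(0)$ is attained at a nontrivial critical point, which necessarily lies on $N_s$. Minimality of $u_s$ over $N_s$ therefore forces $E_s(u_s)=\inf_{H_0^s(\Omega)}E_s$, so in particular
\[
E_s(u_s)\;\le\;E_s\l(\tfrac{t_s}{2}\,\ph_{s}\r).
\]
Combining this with the Nehari identity $E_s(u_s)=\l(\tfrac{1}{2}-\tfrac{1}{p(s)}\r)\|u_s\|_s^2$, equivalently $\|u_s\|_s^2=\tfrac{2p(s)}{p(s)-2}E_s(u_s)$, and then multiplying the comparison inequality by $\tfrac{2p(s)}{p(s)-2}<0$ (which reverses the inequality in the sublinear regime) delivers the desired lower bound.

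\textbf{Main obstacle.} The delicate step is justifying that $u_s$, defined as a minimizer on $N_s$, is in fact a global minimizer of $E_s$ on $H_0^s(\Omega)$, which is what legitimizes comparing its energy to that of the off-manifold test function $\tfrac{t_s}{2}\ph_s$. This hinges crucially on the sublinear growth, through the coercivity and lower boundedness of $E_s$, the strict negativity of $\inf E_s$, and the observation that every nontrivial critical point of $E_s$ lies on $N_s$. Once this identification is in hand, the rest is bookkeeping: the upper bound is a direct H\"older-Poincar\'e calculation using \eqref{cond:a-upperbound}, and the lower bound is algebraic manipulation of the Nehari identity with attention to the sign of $\tfrac{2p(s)}{p(s)-2}$.
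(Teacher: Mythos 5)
Your proposal is correct and follows essentially the same route as the paper: the lower bound comes from comparing $E_s(u_s)$ with $E_s\l(\tfrac{t_s}{2}\ph_s\r)$ and using the Nehari identity $E_s(u_s)=\l(\tfrac12-\tfrac1{p(s)}\r)\|u_s\|_s^2$ together with the sign reversal forced by $p(s)<2$, while the upper bound comes from the H\"older--Poincar\'e chain $\into a(s,x)|u_s|^{p(s)}\,dx\le M^s c_s^{p(s)}\|u_s\|_s^{p(s)}$. Two remarks are worth making. First, you are right to single out the comparison with the off-manifold function $\tfrac{t_s}{2}\ph_s$ as the delicate point: the paper simply asserts $E_s(u_s)\le E_s\l(\tfrac{t_s}{2}\ph_s\r)$ ``since $u_s$ is a positive least energy solution,'' and your coercivity/global-minimizer argument (the global infimum is negative, attained, and every nontrivial critical point lies on $N_s$, so $\inf_{N_s}E_s=\inf_{H_0^s(\Omega)}E_s$) is precisely the justification the paper leaves implicit. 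Second, your computation yields the clean bound $\|u_s\|_s^2\le\l(M^s c_s^{p(s)}\r)^{2/(2-p(s))}$; the extra factor $\l[\tfrac12-\tfrac1{p(s)}\r]$ in the lemma's stated upper bound is negative and cannot be correct as written. In the paper's own argument this factor only appears as the minimum value $\theta(t_0)=\l(\tfrac12-\tfrac1{p(s)}\r)t_0^2$ of the auxiliary function $\theta(t)=\tfrac12 t^2-\tfrac{M^s c_s^{p(s)}}{p(s)}t^{p(s)}$ bounding $E_s(u_s)$ from below, and it cancels once one divides by $\tfrac{p(s)-2}{2p(s)}$, so your version of the bound is the intended one and the stated factor is a typo.
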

  \begin{proof}
 If $t< t_s$, then
  \[
  \begin{split}
      E_{s}(t \ph_{s}) & = \frac{t^2}{2}\|\ph_{s}\|_{s}^2- \frac{t^{p(s)}}{p(s)}\into a(s,x)\abs{ \ph_{s}}^{p(s)} ~dx \\
      &= \frac{t^2}{2} \lambda_{1,s} \|\ph_{s}\|^2_{L^2(\Omega)}- \frac{t^{p(s)}}{p(s)}\into a(s,x)\abs{ \ph_{s}}^{p(s)} ~dx < 0.
  \end{split}
      \]
Since $u_s$ is positive least energy solution of \eqref{Eq:Problem}, we get
      \[
      \begin{split}
       E_{s}(u_s) &= \frac{1}{2}\|u_s\|_{s}^2- \frac{1}{p(s)}\into a(s,x)\abs{u_s}^{p(s)} ~dx = \l(\frac{1}{2}-\frac{1}{p(s)}\r)\|u_s\|^2_{s}\\
       & \leq  E_{s}\l(\frac{t_s \ph_{s}}{2}\r) = \frac{t_s^2}{8} \lambda_{1,s} \|\ph_{s}\|^2_{L^2(\Omega)}- \frac{t_s^{p(s)}}{2^{p(s)}p(s)}\into a(s,x)\abs{ \ph_{s}}^{p(s)} ~dx.
  \end{split}.
      \]
This gives 
\[ \|u_s\|^2_{s} \geq \frac{2p(s)}{p(s)-2} E_{s}\l(\frac{t_s \ph_{s}}{2}\r). \]
By \eqref{var-char-eigenvalue}, we have
      \[
      \begin{split}
        E_{s}(u_s)&=\frac{1}{2}\|u_s\|_{s}^2- \frac{1}{p(s)}\into a(s,x)\abs{u_s}^{p(s)} ~dx \geq \frac{1}{2}\|u_s\|_{s}^2- \frac{c_3^s}{p(s)} c_s^{p(s)}\|u_s\|_{s}^{p(s)}.
      \end{split}
      \]
      Defining for $t>0$
      \[ \theta(t) = \frac{1}{2}t^2 - \frac{c_3^{s}}{p(s)} {c_s^{p(s)}}t^{p(s)},\]
it is easy to verify that the function $\theta$ attains minimum at $t_0 = \l[c_3^{s}{c_s^{p(s)}}\r]^{\frac{1}{2-p(s)}}.$ Therefore,
      \[
      \begin{split}
         \|u_s\|^2_{s} \leq  \l(c_3^{s}{c_s^{p(s)}}\r)^{\frac{2}{2-p(s)}}\l[\frac{1}{2}- \frac{1}{p(s)} \r].
      \end{split}
      \]
\end{proof}
\begin{lemma}\label{limits:bounds}
    Let $p$ satisfies $\lim_{s \to 0^+} p(s) =2$ and $u_s$ be the positive least energy solution of \eqref{Eq:Problem}. Then, 
    \[\lim_{s \to 0^+} \|u_s\|_s^2 \geq \frac{\ln 2}{2} A,\]
    where
\[
\begin{split}
    A:= \exp\l(1 + \frac{2\lambda_{1,L}}{p'(0)} - \frac{2}{p'(0)} \l(\into a'(0,x)\abs{\ph_L}^2 ~dx + p'(0) \into \abs{\ph_L}^2 \ln\abs{\ph_L} ~dx\r) \r).
\end{split}
\]
\end{lemma}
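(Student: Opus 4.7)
The plan is to start from the lower bound $\|u_s\|_s^2 \geq \frac{2p(s)}{p(s)-2} E_{s}\bigl(\tfrac{t_s \ph_{s}}{2}\bigr)$ provided by Lemma \ref{Lemma6}, and evaluate the right-hand side explicitly as $s \to 0^+$. Using $\|\ph_s\|_{L^2(\Omega)} = 1$, $\|\ph_s\|_s^2 = \lambda_{1,s}$ and the defining identity $t_s^{2-p(s)} = \tfrac{2B_s}{p(s)\lambda_{1,s}}$ (where $B_s := \into a(s,x)|\ph_s|^{p(s)}\,dx$), a short algebraic simplification factorizes the right-hand side as
\[
\frac{2p(s)}{p(s)-2} E_s\bigl(\tfrac{t_s \ph_s}{2}\bigr) \;=\; p(s)\, t_s^2 \,\lambda_{1,s} \cdot \frac{1}{p(s)-2}\Bigl(\tfrac{1}{4} - \tfrac{1}{2^{p(s)}}\Bigr).
\]
The plan then splits into two independent limit computations.

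For the scalar factor, writing $\tfrac14 - \tfrac{1}{2^{p(s)}} = \tfrac14\bigl(1 - e^{-(p(s)-2)\ln 2}\bigr)$ and expanding the exponential gives $\lim_{s\to 0^+}\frac{1}{p(s)-2}\bigl(\tfrac14 - \tfrac{1}{2^{p(s)}}\bigr) = \tfrac{\ln 2}{4}$. The more substantial limit is $\lim_{s\to 0^+} p(s)\, t_s^2 \lambda_{1,s} = 2A$. Rewriting $t_s^2 \lambda_{1,s} = \lambda_{1,s}\bigl(\tfrac{2B_s}{p(s)\lambda_{1,s}}\bigr)^{2/(2-p(s))}$ and noting $\lambda_{1,s}\to 1$, $p(s)\to 2$ and $B_s \to 1$ yields a $1^\infty$ indeterminate form. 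I would resolve it by taking logarithms: since $\frac{s}{2-p(s)} \to -\frac{1}{p'(0)}$, it suffices to compute $\lim_{s\to 0^+} \frac{1}{s}\ln\bigl(\tfrac{2B_s}{p(s)\lambda_{1,s}}\bigr)$ from first-order expansions of $B_s$, $\lambda_{1,s}$ and $p(s)$ about $s=0$.

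The standard inputs are $p(s) = 2 + s\,p'(0) + o(s)$ and $\lambda_{1,s} = 1 + s\lambda_{1,L} + o(s)$, the latter being a main result of \cite{Chen-Weth-2019}. The principal obstacle is establishing the expansion
\[
B_s \;=\; 1 + s\Bigl(\into a'(0,x)|\ph_L|^2\,dx + p'(0)\into |\ph_L|^2 \ln|\ph_L|\,dx\Bigr) + o(s),
\]
because $\ph_s$ itself depends on $s$. I plan to perform the pointwise Taylor expansion of $a(s,x)|\ph_s|^{p(s)} - |\ph_s|^2$ in $s$ in the spirit of Lemma \ref{lem:tayexp:weighterm}, and then pass the resulting integrals to the limit using the $L^2$-convergence $\ph_s \to \ph_L$ from \cite{Chen-Weth-2019} together with a Vitali / dominated convergence argument analogous to Lemma \ref{lem:log-conv}, after extracting an a.e.\ subsequence and using the uniform $L^{\beta}$-type control available for eigenfunctions. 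Once this is in place, combining the three expansions gives
\[
\lim_{s \to 0^+}\frac{1}{s}\ln\!\Bigl(\tfrac{2B_s}{p(s)\lambda_{1,s}}\Bigr) \;=\; \into a'(0,x)|\ph_L|^2\,dx + p'(0)\into |\ph_L|^2 \ln|\ph_L|\,dx - \tfrac{p'(0)+2\lambda_{1,L}}{2},
\]
and multiplying by $-2/p'(0)$ reproduces precisely the exponent defining $A$. Assembling both limits produces $\lim_{s \to 0^+}\|u_s\|_s^2 \geq 2A \cdot \tfrac{\ln 2}{4} = \tfrac{\ln 2}{2}A$, as claimed.
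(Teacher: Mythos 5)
Your proposal is correct and follows essentially the same route as the paper: start from the lower bound of Lemma \ref{Lemma6}, compute $\lim_{s\to 0^+} t_s^2 = A$ by taking logarithms and using the first-order expansions of $p(s)$, $\lambda_{1,s}$ and $\into a(s,x)|\ph_s|^{p(s)}\,dx$, and evaluate the remaining scalar factor to get $\tfrac{\ln 2}{2}$. The only difference is cosmetic bookkeeping in the factorization, plus the fact that you explicitly flag (and sketch how to justify) the expansion of $B_s$, which the paper states without detailed proof.
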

\begin{proof} 
We first estimate 
\[\lim_{s \to 0^+} t_s^2 = \lim_{s \to 0^+} {\l(\frac{2}{p(s)}\frac{\into a(s,x)|\ph_{s}|^{p(s)} ~dx}{\lambda_{1,s}\|\ph_{s}\|_{L^2(\Omega)}^2}\r)}^{\frac{2}{2-p(s)}}.\]
Note that
\[
\begin{split}
    & \frac{\into a(s,x)|\ph_{s}|^{p(s)} ~dx-\|\ph_{s}\|_{L^2(\Omega)}^2}{s} \\
    & \qquad \qquad \to  \into a'(0,x)\abs{\ph_L}^2 ~dx + p'(0) \into \abs{\ph_L}^2 \ln\abs{\ph_L} ~dx \quad \text{as} \  s \to 0^+.
\end{split}
\]
Applying \cite[Lemma 3.7]{Santamaria-Saldana-2022} and using the fact that $\|\ph_L\|_{L^2(\Omega)}^2 =1$, we have
\[{\l(\frac{2}{p(s)}\r)}^{\frac{2}{2-p(s)}} = {\bigg[1 - s \frac{p'(0)}{2}+o(s)\bigg]}^{\frac{2}{2-p(s)}} \to e \quad \text{as} \ s \to 0^+,\]
\[{\l(\frac{1}{\lambda_{1,s}}\r)}^{\frac{2}{2-p(s)}}= {\bigg[1 - s \lambda_{1,L}+o(s)\bigg]}^{\frac{2}{2-p(s)}} \to \exp\l(\frac{2\lambda_{1,L}}{p'(0)}\r)\quad \text{as} \ s \to 0^+\]
and
\[
\begin{split}
    & \l(\frac{\into a(s,x)|\ph_{s}|^{p(s)}~dx}{\|\ph_{s}\|_{L^2(\Omega)}^2}\r)^{\frac{2}{2-p(s)}}\\
    & \qquad = \bigg[1- \frac{s}{\|\ph_L\|_{L^2(\Omega)}^2 + o(1)}\l(\into a'(0,x)\abs{\ph_L}^2 ~dx + p'(0) \into \abs{\ph_L}^2 \ln\abs{\ph_L} ~dx\r)\\ & \qquad \qquad  \qquad \qquad +o(s)\bigg]^{\frac{2}{2-p(s)}}\\
    & \qquad \xrightarrow{s \to 0^+} \exp{\l(- \frac{2}{p'(0)}\l(\into a'(0,x)\abs{\ph_L}^2 ~dx + p'(0) \into \abs{\ph_L}^2 \ln\abs{\ph_L} ~dx\r) \r)}.
\end{split}
\]
Collecting the above estimates, we obtain $\lim_{s \to 0^+} t_s^2 = A$ where 
\[
\begin{split}
    A:= \exp\l(1 + \frac{2\lambda_{1,L}}{p'(0)} - \frac{2}{p'(0)} \l(\into a'(0,x)\abs{\ph_L}^2 ~dx + p'(0) \into \abs{\ph_L}^2 \ln\abs{\ph_L} ~dx\r) \r).
\end{split}
\]
From Lemma \ref{Lemma6}, we have
\[\frac{2p(s)}{p(s)-2} E_{s}\l(\frac{t_s \ph_{s}}{2}\r) \leq \|u_s\|^2_{s} \leq {[c_3^{s} c_{s}^{p(s)}]}^{\frac{2}{2-p(s)}}\bigg[\frac{1}{2}-\frac{1}{p(s)}\bigg].\]
Passing to the limit as $s \to 0^+$ in lower bound of the preceding inequality, we get
\[
\begin{split}
\lim_{s \to 0^+} \|u_s\|_s^2 & \geq \lim_{s \to 0^+} \frac{2p(s)}{p(s)-2} E_{s}\l(\frac{t_s \ph_{s}}{2}\r)\\
& = \lim_{s \to 0^+} \frac{\bigg[\lambda_{1,s}\|\ph_s\|_{L^2(\Omega)}^2 \  t_s^2\l(\frac{1}{8}-\frac{1}{2^{p(s)+1}}\r)\bigg]}{\l(\frac{1}{2}-\frac{1}{p(s)}\r)} = \frac{\ln 2}{2} A.
\end{split}
\]
\end{proof}
  \subsection{Asymptotics for non-local sublinear problem}
In this subsection, we study the asymptotics for the non-local problem \eqref{Eq:Problem} with sublinear growth, {\it i.e.,} $p(\cdot)$ satisfies \eqref{assumption-on-p1}. 
  \begin{theorem}
      \label{Theorem7}
       Let $p$ satisfies \eqref{assumption-on-p1} such that $\lim_{s \to 0^+} p(s) =2$ and $a$ satisfies \eqref{a:regulrity-asym}, \eqref{a:regulrity-1-asym} and \eqref{cond:a-upperbound}. Let $u_s$ be a positive least energy solution of \eqref{Eq:Problem}. Then, there is a constant $C$ depending on $\Omega$, $p'(0)$ and independent of $s$ such that
       \[\|u_s\|^2= \mathcal{E}(u_s,u_s)\leq C+o(1) \ \text{as} \ s \to 0^+.\]
  \end{theorem}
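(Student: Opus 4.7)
The plan is to reduce bounding $\|u_s\|^2 = \mathcal{E}(u_s, u_s)$ to bounding $\mathcal{E}_L(u_s, u_s)$, and then to control the latter by the ``discrete $s$-derivative'' $(\|u_s\|_s^2 - \|u_s\|_{L^2(\Omega)}^2)/s$, which is accessible via the Nehari identity $u_s \in N_s$ and the uniform $L^\infty$ bound of Proposition \ref{Prop1}.

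\textbf{Step 1 (reduction to $\mathcal{E}_L$).} From the decomposition \eqref{quadratic-form} together with the zero-order bilinear form estimate of \cite[Lemma 3.4]{Santamaria-Saldana-2022},
\[
\mathcal{E}(u_s, u_s) \leq \mathcal{E}_L(u_s, u_s) + \bigl(C(\Omega) - \rho_N\bigr)\,\|u_s\|_{L^2(\Omega)}^2,
\]
and $\|u_s\|_{L^2(\Omega)}$ is uniformly bounded by Proposition \ref{Prop1}. Hence it suffices to prove $\mathcal{E}_L(u_s, u_s) \leq C + o(1)$ as $s \to 0^+$.

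\textbf{Step 2 (Fourier inequality and Nehari identity).} The elementary bound $e^x \geq 1 + x$ applied to $x = s\ln|\xi|^2$ yields $|\xi|^{2s} \geq 1 + s \ln|\xi|^2$. Integrating against $|\widehat{u_s}(\xi)|^2\,d\xi$ and using Plancherel,
\[
\mathcal{E}_L(u_s, u_s) = \int_{\mathbb{R}^N} \ln|\xi|^2\,|\widehat{u_s}(\xi)|^2\,d\xi \leq \frac{\|u_s\|_s^2 - \|u_s\|_{L^2(\Omega)}^2}{s}.
\]
Since $u_s \in N_s$, $\|u_s\|_s^2 = \int_\Omega a(s,x)\,u_s^{p(s)}\,dx$, so we rewrite
\[
\frac{\|u_s\|_s^2 - \|u_s\|_{L^2(\Omega)}^2}{s} = \int_\Omega \frac{a(s,x) - 1}{s}\,u_s^{p(s)}\,dx + \int_\Omega \frac{u_s^{p(s)} - u_s^2}{s}\,dx.
\]
The first integral is uniformly bounded by the $C^1$-in-$s$ regularity of $a(\cdot, x)$ together with \eqref{a:regulrity-1-asym}, \eqref{cond:a-upperbound}, combined with the $L^\infty$ bound on $u_s$. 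For the second, the mean value theorem applied a.e.\ (using $u_s > 0$ in $\Omega$) gives $u_s^{p(s)}(x) - u_s^2(x) = (p(s) - 2)\,u_s^{q(x)}(x)\,\ln u_s(x)$ with $q(x) \in [p(s), 2]$; the prefactor $(p(s)-2)/s \to p'(0)$ is bounded, and since $t \mapsto t^q|\ln t|$ extends continuously to $[0, C_\infty]$ (value $0$ at the origin) for $q$ in any compact subset of $(0, \infty)$, the integrand $u_s^{q(x)}|\ln u_s|$ is pointwise bounded by a constant depending only on $C_\infty$. Hence both integrals are uniformly bounded as $s \to 0^+$, concluding the proof via Step~1.

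\textbf{Main obstacle.} The delicate point is the control of $\int_\Omega (u_s^{p(s)} - u_s^2)/s\,dx$: $u_s$ may vanish near $\partial\Omega$, so $\ln u_s$ blows up, yet the combination $u_s^{q(x)} \ln u_s$ remains uniformly bounded because $t \mapsto t^q \ln t$ extends continuously by $0$ at the origin. This elementary one-variable fact, together with the uniform $L^\infty$ estimate from Proposition \ref{Prop1}, is precisely what converts the Nehari identity into the required uniform $\mathbb{H}(\Omega)$-bound on $u_s$.
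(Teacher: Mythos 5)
Your proposal is correct and follows essentially the same route as the paper: reduce $\mathcal{E}(u_s,u_s)$ to $\mathcal{E}_L(u_s,u_s)$ via \eqref{quadratic-form} and \cite[Lemma 3.4]{Santamaria-Saldana-2022}, dominate $\mathcal{E}_L(u_s,u_s)$ by the difference quotient $(\|u_s\|_s^2-\|u_s\|_{L^2(\Omega)}^2)/s$ on the Fourier side, and control that quotient through the Nehari identity, the uniform $L^\infty$ bound of Proposition \ref{Prop1}, and the boundedness of $t\mapsto t^{q}\ln t$ near the origin. The only differences are cosmetic (you use $e^x\ge 1+x$ and the mean value theorem where the paper integrates $\frac{d}{d\theta}$ over $\theta\in[0,1]$ and works with a $C_c^\infty$ approximating sequence $\ph_n$), so no further comment is needed.
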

  \begin{proof}
      Note by \cite[Lemma 3.5]{Santamaria-Saldana-2022}, $u_s \in \h(\Omega)$, for all $s \in (0, \frac{1}{4}).$ For a fixed $s \in (0, \frac{1}{4})$, let $\{\ph_n\}$ be a sequence of $C^\infty_c(\Omega)$ functions such that $\ph_n \to u_s$ in $H^{s}_0(\Omega)$ as $n \to \infty$. Denote
      \[
      I_n := \frac{\|\ph_n\|^2_{s}-\|\ph_n\|^2_{L^2(\Omega)}}{s} = \int_0^1 \int_{\R^N} \abs{\xi}^{2{s}\tau} \ln \abs{\xi}^2 \abs{\widehat{\ph_n}(\xi)}^2 ~d\xi ~d\tau.
          \]
Since $u_s$ is the weak solution of \eqref{Eq:Problem}, we have          
          \[
      \begin{split}
          I_n &=\frac{1}{s}\l[ 2 E_{s}(\ph_n)+\frac{2}{p(s)}\into a(s,x)\abs{\ph_n}^{p(s)} ~dx \r]- \frac{\|\ph_n\|^2_{L^2(\Omega)}}{s} \\
          &= \frac{1}{s}\l[2E_{s}(\ph_n)+\frac{2-p(s)}{p(s)}\into a(s,x)\abs{\ph_n}^{p(s)} ~dx\r]\\
         & \qquad  + \frac{1}{s}\l[\into a(s,x)\abs{\ph_n}^{p(s)} ~dx-\into \abs{\ph_n}^2 ~dx\r].
      \end{split}
      \]
Passing to the limit as $n \to \infty$, we obtain
      \[
      \begin{split}
      \lim_{n \to \infty} & 2 E_{s}(\ph_n) + \frac{2-p(s)}{p(s)}\into a(s,x)\abs{\ph_n}^{p(s)} ~dx \\
      &= 2E_{s}(u_s) +\frac{2-p(s)}{p(s)}\into a(s,x)\abs{u_s}^{p(s)} ~dx = 0.
      \end{split}
      \]
      Therefore, we have
      \[
      \begin{split}
      I_n &= \frac{1}{s}\l[\into a(s,x)\abs{\ph_n}^{p(s)} ~dx-\into \abs{\ph_n}^2 ~dx\r]+o(1)\\
      &=\into \int_0^1 a'(s\tau, x) \abs{\ph_n}^{2+(p(s)-2)\tau} ~dx ~d\tau\\
      & \quad + \l(\frac{p(s)-2}{s}\r)\int_0^1 \into a(s\tau,x)\abs{\ph_n}^{2+(p(s)-2)\tau} \ln \abs{\ph_n} ~dx ~d\tau + o(1)\\
      &:= I_1+I_2 + o(1).
      \end{split}
      \]
Note that   
      \[I_1 \to \into a'(0,x)\abs{\ph_n}^2 ~dx  \quad \text{as} \ s \to 0^+.\]
Using $1< p(s) <2$, \eqref{a:regulrity-asym} and \eqref{a:regulrity-1-asym} for $s$ small enough, we get
      \[
  \begin{split} 
      I_2 & = \l(\frac{p(s)-2}{s}\r) \bigg[\int_0^1 \int_{\{\abs{\ph_n}<1\}} a(s\tau,x)\abs{\ph_n}^{2+(p(s)-2)\tau}\ln \abs{\ph_n} ~dx~d\tau\\
       &\qquad +\int_0^1 \int_{\{\abs{\ph_n}\geq1\}} a(s\tau,x)\abs{\ph_n}^{2+(p(s)-2)\tau} \ln |\ph_n| ~dx ~d\tau\bigg]\\
        &\leq \l(\frac{p(s)-2}{s}\r)\bigg[\int_0^1 \int_{\{\abs{\ph_n}<1\}} a(s\tau,x)\abs{\ph_n}^{2+(p(s)-2)\tau} \ln \abs{\ph_n} ~dx ~d\tau\bigg]\\
        &\leq \l(\frac{2-p(s)}{s}\r)\abs{\Omega}c_3^{s}\sup_{t\in (0,1)} \abs{t}\abs{\ln \abs{t}}\\
        &< \l(\frac{2-p(s)}{s}\r)c_3^{s}\abs{\Omega}.
  \end{split}  
      \]
Therefore,
      \[
      I_2 \to - p'(0) \abs{\Omega} \quad \text{as} \ s \to 0^+.
      \]
On the other hand, using the definition of $\mathcal{E}_L(\cdot,\cdot)$, we get
      \[
      \begin{split}
      I_n &\geq  \int_0^1 \int_{\{\abs{\xi}<1\}} \abs{\xi}^{2{s_k}\tau} \ln \abs{\xi}^2 \abs{\widehat{\ph_n}(\xi)}^2 ~d\xi ~d\tau + \int_{\{\abs{\xi}\geq 1\}} \ln \abs{\xi}^2 \abs{\widehat{\ph_n}(\xi)}^2 ~d\xi \\
      &= \int_0^1 \int_{\{\abs{\xi}<1\}} \abs{\xi}^{2{s_k}\tau} \ln \abs{\xi}^2 \abs{\widehat{\ph_n}(\xi)}^2 ~d\xi -\int_{\{\abs{\xi}< 1\}} \ln \abs{\xi}^2 \abs{\widehat{\ph_n}(\xi)}^2 ~d\xi ~d\tau \\
      & \qquad +\int_{\R^N} \ln \abs{\xi}^2 \abs{\widehat{\ph_n}(\xi)}^2 ~d\xi \\
      &\geq \int_{\R^N} \ln \abs{\xi}^2 \abs{\widehat{\ph_n}(\xi)}^2 ~d\xi \geq \mathcal{E}_L(\ph_n,\ph_n) \geq \|\ph_n\|^2- C_1 \|\ph_n\|^2_{L^2(\Omega)},
      \end{split}
      \]
      where $C_1>0$ is a constant that depends on $\Omega.$ 
      Finally, by using Proposition \ref{Prop1} and combining the above estimates, there exists $C_2$ (independent of $n$) such that
      \[
      \begin{split}
      \|\ph_n\|^2 \leq I_n + C_1 \|\ph_n\|^2_{L^2(\Omega)}  \leq C_2+o(1).
      \end{split}
      \]
      As $n \to \infty$, we get the required bound on $\|u_s\|$.
  \end{proof}
\noindent   \textit{Proof of Theorem \ref{thm-asym-sub}:}
      By Theorem \ref{Theorem7}, $u_s$ is uniformly bounded in $\h(\Omega)$. This implies that there exists a $u_0 \in \h(\Omega)$ such that $u_s \rightharpoonup u_0$ in $\h(\Omega)$, which by the compact embedding $\h(\Omega)\hookrightarrow L^2(\Omega)$ implies $u_s \to u_0$ in $L^2(\Omega)$ and $u_s\to u_0$ a.e. in $\Omega$.\\
      \textbf{Claim 1:} $u_0$ is a weak solution of \eqref{Eq:LimitingProblem}.\\
      Let $\ph \in C^\infty_c(\Omega)$. By \cite[Theorem 1.1]{Chen-Weth-2019}, we have
      \[
  \begin{split}
      \into & u_s (\ph + s L_\Delta \ph +o(s)) ~dx \\
      &= \into u_s (-\Delta)^{s} \ph ~dx = \into a(s,x)\abs{u_s}^{p(s)-2} u_s \ph ~dx\\
      &=\into u_{s} \bigg(1+ s \int_0^1 \bigg[ \big[a'(s \tau,x) +a(s \tau, x)\ln \abs{u_{s}}p'(s\tau)\big]\abs{u_{s}}^{p(s\tau )-2}\bigg] ~d\tau+o(s) \bigg)\ph ~dx
  \end{split}
        \]
holds in $L^\infty(\Omega).$ Therefore,  
  \[
  \begin{split}
  \mathcal{E}_L(u_s,\ph)+o(1) &= \into u_s L_\Delta\ph ~dx +o(1)\\
  &= \into  \int_0^1 \big[a'(s \tau,x) +a(s \tau, x)\ln \abs{u_{s}}p'(s\tau)\big]\abs{u_{s}}^{p(s\tau )-2} u_{s} \ph ~d\tau  ~dx.
  \end{split}
  \]
  Letting $s \to 0^+$, using \eqref{a:regulrity-1-asym} and \cite[Lemma 2]{Angeles-Saldana-2023}, we get
  \[
  \begin{split}
      \mathcal{E}_L(u_0, \ph)&= p'(0) \into \ln \abs{u_0} u_0\ph ~dx+ \into a'(0,x) u_0\ph ~dx.
  \end{split}
  \]
  Hence, by using density of $C_c^\infty(\Omega)$ in $\h(\Omega)$, we infer claim 1.\\
  \textbf{Claim 2:} $u_0 \not \equiv 0.$\\
  By Lemma \ref{limits:bounds}, there exists $C>0$ depending on $\Omega$ and $p'(0)$ such that
  \[C \leq \|u_s\|^2_{s} = \into a(s,x)\abs{u_s}^{p(s)} ~dx \leq c_3^{s}\abs{\Omega}^{\frac{2-p(s)}{2}}{\l(\into \abs{u_s}^2 ~dx\r)}^{\frac{p(s)}{2}}.\]
  Letting $s \to 0^+$, we obtain $0 < C \leq \|u_0\|_{L^2(\Omega)}^2$ which implies that $u_0 \not \equiv 0.$ Moreover, we have $\mathbb{E}(u_0)=  \frac{p'(0)}{4}\into u_0^2 ~dx.$\\
  \textbf{Claim 3:} $u_0$ is a least energy solution of \eqref{Eq:LimitingProblem}. Using \eqref{a:regulrity-asym}, we get
  \[
  \begin{split}
      \frac{p'(0)}{4} \lim_{s \to 0^+} \|u_s\|_{s}&= \frac{p'(0)}{4}\lim_{s \to 0^+} \into a(s,x)\abs{u_s}^{p(s)} ~dx\\
      &= \frac{p'(0)}{4} \|{u_0}\|^2_{L^2(\Omega)} = \mathbb{E}(u_0).
  \end{split}
  \]
  By Theorem \ref{them:existence-limiting}, there exists $v_0 \in \h(\Omega)$ such that $\mathbb{E}(v_0)= \inf_{\h(\Omega)} \mathbb{E}$. By density, let $(v_k)_{k\in \N}\subset C^\infty_c(\Omega)$ such that $v_k \to v_0$ in $\h(\Omega)$ as $k\to \infty$. Since $u_s$ is a least energy solution, for every $k \in \mathbb{N}$, we have
  \[
  \frac{p'(0)}{4} \lim_{s \to 0^+} \|u_s\|^2_{s}= \lim_{s \to 0^+} \l(\frac{p(s)-2}{2p(s)}\r) \frac{\|u_s\|^2_{s}}{s} = \lim_{s \to 0^+} \frac{E_{s}(u_s)}{s}\leq \frac{E_{s}(v_k)}{s}.\]
  By Lemma \ref{Lemma3}, as $k \to \infty$,
  \[\inf_{\h(\Omega)}  \mathbb{E} \leq \mathbb{E}(u_0) = \frac{p'(0)}{4} \|u_0\|_{L^2(\Omega)}^2 = \frac{p'(0)}{4} \lim_{s \to 0^+} \|u_s\|^2_{s}\leq \inf_{\h(\Omega)}  \mathbb{E}\]
  which implies that $u_0$ is a least energy solution of \eqref{Eq:LimitingProblem}. Since $u_s$ is a sequence of positive solutions of \eqref{Eq:Problem}, we can conclude that $u_0$ is non-negative.\\
  \textbf{Claim 4:} $u_0 \in L^\infty(\Omega)$ and $\|u_0\|_{L^\infty(\Omega)} \leq M{(R^2 e^{\frac{1}{2}-\rho_N})}^{\frac{1}{p'(0)}}:= C_0$, where $R= 2$diam$(\Omega)$.\\
  By Proposition \ref{Prop1}, $\|u_s\|_{L^\infty(\Omega)} \leq C_0+o(1)$ as $s \to 0^+$. We argue by contradiction. Assume that there exists $\eps>0$ and $\Omega_1 \subset \Omega$ of positive measure such that $\abs{u_0}>(1+\eps)C_0$ in $\Omega_1$.
  This implies, for a.e. $x\in \Omega_1$,
  \[\abs{u_s(x)-u_0(x)}\geq \abs{u_0(x)}-\abs{u_s(x)}>\eps C_0,\]
  which in turn implies
  \[\into \abs{u_s-u_0}^2 ~dx \geq \int_{\Omega_1} \abs{u_s-u_0}^2 ~dx > \eps C_0\abs{\Omega_1}>0,\]
  contradicting the fact that $u_s\to u_0$ in $L^2(\Omega)$. Therefore, 
  \begin{equation}
    \label{estimate-on-L-infinity-norm}
  \|u_0\|_{L^\infty(\Omega)}\leq M{(R^2 e^{\frac{1}{2}-\rho_N})}^{\frac{1}{p'(0)}}
  \end{equation}
and thus, by dominated convergence theorem, $u_s \to u_0$ in $L^q(\Omega)$ for any $1\leq q <\infty.$ Finally, by applying Theorem \ref{thm:regularity-uniqueness}, we obtain $u_0$ satisfies \eqref{u_reg-sublinear}.\qed 
\section{Final comments and open problems}
In this section, we outline some contrasting features of our Brezis-Nirenberg problem involving the logarithmic Laplacian in comparison with the Brezis-Nirenberg problem for both the Laplacian and the fractional Laplacian. We also propose some interesting open problem for future research.
\begin{itemize}
    \item The existence result of non-trivial weak solution for the Brezis-Nirenberg problem \eqref{prob:logarithmic-laplace} with $f \equiv \lambda$ does depend upon the range of $\lambda$, while in the case of Brezis-Nirenberg problem for the Laplacian and the fractional Laplacian for $\lambda <0$, the non-existence of positive solution is known via Pohozaev type identity. 
    \item For our problem \eqref{prob:logarithmic-laplace}, the ground state of weak solution is proved by minimizing the energy $\mathbb{E}$ over the Nehari manifold $N_{0, \sigma}$ and exploiting the compact embedding of $\h(\Omega)$ in $L^2(\Omega)$. While the same arguments cannot be applied to the Brezis-Nirenberg problem for the Laplacian and the fractional Laplacian due to the lack of compactness in the embeddings of the corresponding energy spaces. 
    \item It is worth to emphasize that unlike the results available for the Brezis-Nirenberg Problem for the Laplacian \cite{Brezis-Nirenberg-1983}, the fractional Laplacian \cite{Servadei-Valdinoci-2013-1, Servadei-Valdinoci-2015, Servadei-Valdinoci-2013} and the fractional $p$-Laplacian \cite{{Mosconi-Perera-Squassina-Yang-2016}}, in our case we do not have any restrictions on the dimension $N$ when we consider \eqref{prob:logarithmic-laplace} with $\sigma \in (0,\frac{4}{N}).$ 
    \item The condition $\sigma < \frac{4}{N}$ in Theorem \ref{Main-res-limitingprob}  plays a significant role (see Proposition \ref{weak-conve-est}). The existence or non-existence of weak solution when $\sigma \geq \frac{4}{N}$ is an open problem.
\end{itemize}
\section*{Acknowledgments}

The first author is funded by the Scientific High-level visiting fellowship (SSHN-2024) by the French Institute in India (IFI) and the Embassy of France in India. The first author thanks the Universit\'e de Pau et des Pays de l'Adour, Pau for the kind hospitality during a research stay in October 2024. 

The second author is partially funded by IFCAM (Indo-French Centre for Applied Mathematics) IRL CNRS 3494.

The third author is funded by the UGC Junior Research Fellowship with reference no. 221610015405. 



\end{document}